\def\eqref#1{equation~(\ref{#1})}
\def\1{\bf{1}}
\newcommand{\Norm}[1]{\left\| #1 \right\|}
\def\inner#1#2{\langle #1, #2 \rangle}
\newcommand{\norm}[1]{\| #1\|}
\def\vDelta{{\bm{\Delta}}}
\def\vxi{{\bm{\xi}}}
\def\vzero{{\bf{0}}}
\def\va{{\bf{a}}}
\def\vg{{\bf{g}}}
\def\vu{{\bf{u}}}
\def\vv{{\bf{v}}}
\def\vx{{\bf{x}}}
\def\vy{{\bf{y}}}
\def\vz{{\bf{z}}}
\def\fA{{\mathcal{A}}}
\def\fD{{\mathcal{D}}}
\def\fF{{\mathcal{F}}}
\def\fO{{\mathcal{O}}}
\def\fX{{\mathcal{X}}}
\def\sB{{\mathbb{B}}}
\def\BE{{\mathbb{E}}}
\def\BI{{\mathbb{I}}}
\def\BN{{\mathbb{N}}}
\def\BP{{\mathbb{P}}}
\def\BR{{\mathbb{R}}}
\def\BS{{\mathbb{S}}}
\def\BU{{\mathbb{U}}}
\DeclareMathOperator*{\argmax}{arg\,max}
\DeclareMathOperator*{\argmin}{arg\,min}
\theoremstyle{plain}
\newtheorem{theorem}{Theorem}
\newtheorem{definition}{Definition}
\newtheorem{lemma}{Lemma}
\newtheorem{assumption}{Assumption}
\newtheorem{remark}{Remark}
\newtheorem{proposition}{Proposition}
\def\Ddots{\mathinner{\mkern1mu\raise\p@
\vbox{\kern7\p@\hbox{.}}\mkern2mu
\raise4\p@\hbox{.}\mkern2mu\raise7\p@\hbox{.}\mkern1mu}}
\newcommand*{\rom}[1]{\expandafter\@slowromancap\romannumeral #1@}
\definecolor{mydarkgreen}{RGB}{39,130,67}
\definecolor{mydarkred}{RGB}{192,25,25}
\definecolor{bgcolor}{rgb}{0.93,0.99,1}
\definecolor{bgcolor2}{rgb}{0.8,1,0.8}
\definecolor{bgcolor3}{rgb}{0.50,0.90,0.50}
\begin{document}

\title{A Parameter-Free and Near-Optimal Zeroth-Order Algorithm for \\ Stochastic Convex Optimization}

\author{
    Kunjie Ren\thanks{School of Data Science, Fudan University; kjren24@m.fudan.edu.cn} \qquad \quad
     Luo Luo\thanks{School of Data Science, Fudan University; luoluo@fudan.edu.cn}
 }
 \date{}
 

\maketitle

\begin{abstract}
This paper considers zeroth-order optimization for stochastic convex minimization problem.
We propose a \underline{p}arameter-free st\underline{o}chastic z\underline{e}roth-order \underline{m}ethod (POEM) by introducing a stepsize scheme based on the distance over finite difference and an adaptive smoothing parameter.
We provide the theoretical analysis to show that POEM achieves the  near-optimal stochastic zeroth-order oracle complexity. 
We further conduct the numerical experiments to demonstrate POEM outperforms existing zeroth-order methods in practice.
\end{abstract}

\section{Introduction}
\vskip 0.2cm
This paper studies the stochastic optimization problem 
\begin{align}\label{main:prob}
    \min_{\vx\in \fX} f(\vx) \triangleq \BE_{\vxi\sim \Xi}[F(\vx; \vxi)]
\end{align}
where the domain $\fX\subseteq \BR^d$ is a compact convex set, the random variable $\vxi$ follows the distribution $\Xi$, and the stochastic component function $F(\vx;\vxi)$ is convex and Lipschitz continuous in $\vx$ on $\fX$ for given $\vxi$.
We focus on stochastic zeroth-order optimization for solving Problem~(\ref{main:prob}), 
which allows the algorithm to query stochastic 
function values at given point.
This problem is relevant to the scenario that accessing the (stochastic) first-order information is expensive or infeasible, 
including the applications in bandit optimization \cite{flaxman2004online,agarwal2010optimal,duchi2015optimal, shamir2017optimal}, 
adversarial training \cite{goodfellow2014explaining,shaham2018understanding}, 
reinforcement learning \cite{balasubramanian2018zeroth,mania2018simple}, 
and other black-box models \cite{liu2016delving,ilyas2018prior,liu2020primer}.

The finite difference methods are widely used in zeroth-order optimization, which estimates the first-order information of the objective function by random directions
\cite{kiefer1952stochastic,ghadimi2013stochastic,duchi2015optimal,nesterov2017random,nazari2020adaptive,gasnikov2022power,lin2022gradient,rando2024optimal,chen2023faster,kornowski2024algorithm}.
For stochastic convex problem with Lipschitz continuous components,
\citet{nesterov2017random} developed random search methods with sublinear convergence rates.
Later, \citet{duchi2015optimal} proposed a stochastic algorithm by using two random sequences to construct the stochastic finite differences, achieving the sharper dependence on the dimension than the result of \citet{nesterov2017random}.
They further provide the lower bound to show that their algorithm achieves the near-optimal stochastic zeroth-order oracle (SZO) complexity.
Consequently, \citet{shamir2017optimal} proposed an algorithm with single random sequence that is established by the uniform distribution on the unit ball, which is optimal and easy to implement.
Recently, \citet{rando2024optimal} studied the finite difference with random orthogonal directions, also achieving the optimal ZSO complexity.
In addition, \citet{ghadimi2013stochastic,nesterov2017random,wibisono2012finite,balasubramanian2018zeroth} considered the zeroth-order optimization for the smooth problem and
\citet{lin2022gradient,chen2023faster,kornowski2024algorithm} studied the nonsmooth nonconvex problem.
There are several limitations in existing zeroth-order optimization methods \cite{kiefer1952stochastic,ghadimi2013stochastic,duchi2015optimal,nesterov2017random,nazari2020adaptive,gasnikov2022power,lin2022gradient,rando2024optimal}. 
Specifically, their performance is heavily affected by the parameter settings and the optimal convergence rate typically requires the appropriate step size, which depends on the knowledge of the problem properties (e.g., the Lipschitz constant) and the iteration budget.
In addition, the smoothing parameter in the finite difference usually depends on target accuracy or decays rapidly, which may arise the issue on the numerical stability in practice.

We desire to establish the adaptive stochastic optimization methods to avoid the pain of tuning parameters.
Most of existing works focus on first-order methods. 
For example, \citet{rolinek2018l4, vaswani2019painless, paquette2020stochastic} introduced line search methods for stochastic optimization. 
\citet{tan2016barzilai, berrada2020training, loizou2021stochastic,wang2023generalized} extended the Barzilai--Borwein (BB) step size \cite{barzilai1988two} and the Polyak step size \cite{polyak1987introduction} to stochastic settings.
For training deep neural networks, the adaptive algorithms such as AdaGrad \cite{duchi2011adaptive}, Adam \cite{kingma2014adam}, and their variants \cite{tieleman2012lecture, zeiler2012adadelta, shazeer2018adafactor,wang2024provable,zhang2024adam} exploit the specific problem structure and achieve success in many applications. 
However, these methods still require the appropriate initialization, which may significantly affect convergence rates in both theoretical and empirical. 

Ideally, we shall design the parameter-free optimization method which has the near-optimal convergence rate and almost does not require the knowledge of problem properties \cite{streeter2012no,defazio2023learning,lan2023optimal,li2023simple}.
There are several such kind of methods have been established by online learning techniques for stochastic convex optimization 
\cite{luo2015achieving, orabona2016coin, cutkosky2018black, bhaskara2020online, mhammedi2020lipschitz, jacobsen2022parameter}, while their implementations are quite complicated.
In practice, \citet{orabona2017training, chen2022better} adopt the classical stochastic gradient descent (SGD) framework with coin-betting techniques, which perform well in training neural network.
Later, \citet{carmon2022making} showed that using the bisection step to determine the step size in SGD leads to a parameter-free algorithm with near-optimal convergence rates.
Furthermore, \citet{ivgi2023dog} developed a parameter-free step size schedule called Distance over Gradients (DoG), which uses the distance from the initial point and the norm of stochastic gradients \cite{you2017large, shazeer2018adafactor, bernstein2020distance} to adjust the step sizes. 
DoG achieves near-optimal convergence rates and has good performance in practice. 
However, all existing parameter-free methods are designed for first-order optimization.
The parameter-free zeroth-order optimization includes additional challenges such as making both the step size and the smoothing parameter be tuning-free and the addressing the dependence on the dimension in the convergence rates.

In this paper, we propose a \underline{p}arameter-free st\underline{o}chastic z\underline{e}roth-order \underline{m}ethod (POEM) by using a stepsize schedule based on the distance over finite difference and the adaptive smoothing parameter.
For the stochastic convex optimization (\ref{main:prob}), we show that the initialization only affects the convergence rates of POEM by a logarithmic factor. 
We provide the high probability convergence guarantees for POEM to show that it achieves the near-optimal SZO complexity. 
We compare the results of POEM with related work in Table~\ref{table:szo}.
We also study the problem with unbounded domain by showing the impossibility of ideal parameter-free algorithm in such setting.
Moreover, we conduct the numerical experiments to  demonstrate the advantage of our method in practice.

\begin{table*}[t]
\centering
\caption{We present the ZSO complexity, the step size $\eta_t$, and the smoothing parameter $\mu_t$ for finding the $\epsilon$-suboptimal solution of Problem~(\ref{main:prob}), where $t$ is the iteration counter, $T$ is the total iteration budget, ${\bar r}_t$ and $G_t$ are defined in Algorithm \ref{alg:poem}.}\label{table:szo}
\vskip0.05cm 
\begin{tabular}{cccccc}
\hline
Algorithms & ~Parameter-Free~ & ~~SZO Complexity~~& ~~$\eta_t$~~ & ~~$\mu_t$~~  \\
\hline\hline\addlinespace
\begin{tabular}{c}
RSNSO$^\sharp$   \\ \cite{nesterov2017random}
\end{tabular}
 & No & $\fO\left(\dfrac{d^2L^2 s_0^2}{\epsilon^2}\right)$ & $\dfrac{s_0}{dL\sqrt{T}}$ & $s_0\sqrt{\dfrac{d}{T}}$  \\ \addlinespace
\begin{tabular}{c}
TPGE$^\ddag$ \\ \cite{duchi2015optimal}
\end{tabular}
 & No &    $\tilde\fO\left(\dfrac{dL^2 D_\fX^2}{\epsilon^2}\right)$ & ~~$\dfrac{D_\fX}{L\sqrt{d\log(2d)t}}$~~ & ~~~$\dfrac{D_\fX}{t}$~~and~~$\dfrac{D_\fX}{d^2t^2}$\\ \addlinespace
\begin{tabular}{c}
   TPBCO \\ \cite{shamir2017optimal}
\end{tabular} & No &  $\fO\left(\dfrac{dL^2 D_\fX^2}{\epsilon^2} \right)$ & $\dfrac{D_\fX}{L\sqrt{dT}}$ & $D_\fX\sqrt{\dfrac{d}{T}}$\\\addlinespace
\begin{tabular}{c}
POEM \\ (Theorem \ref{thm})
\end{tabular} & \textbf{Yes} & 
 $\tilde\fO\left(\dfrac{dL^2 D_\fX^2}{\epsilon^2}\right)$ & $\dfrac{\bar{r}_{t}}{\sqrt{G_{t}}}$ & $\bar{r}_t\sqrt{\dfrac{d}{t+1}}$ &  \\ 
\addlinespace
\hline\addlinespace
\begin{tabular}{c}
  Lower bound \\ \cite{duchi2015optimal}
\end{tabular}
  & -- & 
  $\Omega\left(\dfrac{dL^2 D_\fX^2}{\epsilon^2}\right)$ & -- & -- \\ \addlinespace \hline
\end{tabular}  
{\begin{itemize}[topsep=3.5pt,itemsep=1.8pt,partopsep=3.5pt, parsep=2pt,leftmargin=0.75cm]
    \item[\scriptsize$\sharp$] \scriptsize \!The RSNSO \cite{nesterov2017random} does not require assuming the domain is bounded since its complexity depends on the distance from the initial point $\vx_0$  to the solution $\vx_\star$, i.e., $s_0=\norm{\vx_0-\vx_\star}$, rather than the diameter $D_\fX$. We detailed discuss the case without bounded domain assumption in Section \ref{sec:unbounded}. \\[-1.5cm]
    \item[\scriptsize$\ddag$] \scriptsize \!The TPGE \cite{duchi2015optimal} has two two sequence of stochastic finite differences, corresponding to two different smoothing parameters settings.
\end{itemize}}\vskip-0.2cm
\end{table*}

\section{Preliminaries}\label{sec:pre}
In this section, we formalize the problem setting  and introduce the smoothing technique in zeroth-order optimization.

\subsection{Notation and Assumptions}
Throughout the paper, we use $\norm{\cdot}$ to denote the Euclidean norm.
We denote the unit ball as \mbox{$\sB^d\triangleq\{\vu\in \BR^d: \norm{\vu}\leq 1\}$}
and the unit sphere as $\BS^{d-1}\triangleq \{\vv\in \BR^d: \norm{\vv}= 1\}$.
We let $\BU(\sB^d)$ and $\BU(\BS^{d-1})$ be the uniform distribution over the unit ball and unit sphere, respectively.
Moreover, we use $\tilde\fO(\cdot)$ to hide the logarithmic factor.

We impose the following assumptions for Problem  (\ref{main:prob}).

\begin{assumption}\label{asm:domain}
The domain $\fX\subseteq\BR^d$ is compact and convex.
Furthermore, we denote the diameter of $\fX$ as
\begin{align*}
    D_\fX \triangleq \max_{\vx,\vy\in \fX} \norm{\vx-\vy}<\infty.
\end{align*}
\end{assumption}

According to Assumption \ref{asm:domain}, the objective function attains its minimum on the compact set $\fX$.
Thus, we define the optimal solution of Problem (\ref{main:prob}) as follows.

\begin{definition}\label{def:x_star}
We let $\vx_\star\in \fX$ be an optimal solution of Problem (\ref{main:prob}) such that $f(\vx_\star)=\min_{\vx\in\fX} f(\vx)$.  
\end{definition}

We define the Euclidean projection onto the
domain $\fX$ as follows.

\begin{definition}\label{def:project}
We define the Euclidean projection onto the compact and convex set $\fX\subseteq\BR^d$ at the point $\vx\in\BR^d$ as
\begin{align*}
    \Pi_{\fX}(\vx) \triangleq \argmin_{\vy\in\fX} \Norm{\vx-\vy}.
\end{align*}
\end{definition}

We desire the iterative algorithm find the approximate solution of problem (\ref{main:prob}) which is defined as follows.
  
\begin{definition} \label{def:sol}
We say $\hat \vx$ is an $\epsilon$-suboptimal solution of \mbox{Problem (\ref{main:prob})} if holds $f(\hat \vx) - f(\vx_\star) \le \epsilon$ for given $\epsilon>0$.
\end{definition}

We also suppose the stochastic component $F(\vx;\vxi)$ is convex and Lipschitz continuous in $\vx$.

\begin{assumption} \label{asm:convex}
The stochastic component $F(\vx;\vxi)$ is \mbox{convex} in $\vx$ for given $\vxi$.
\end{assumption}

\begin{assumption} \label{asm:lipschitz}
There exists a constant $L \geq  0$ such that we have  $\norm{F(\vx;\vxi)-F(\vy;\vxi)}\leq L\norm{\vx-\vy}$ for all $\vx, \vy$, and~$\vxi$.
\end{assumption}

We assume the algorithm for solving Problem (\ref{main:prob}) can access the stochastic zeroth-order oracle that returns the unbiased stochastic function value evaluations at two points.

\begin{assumption}\label{asm:oracle}
The stochastic zeroth-order oracle can outputs the stochastic evaluations $F(\vx;\vxi)$ and $F(\vy;\vxi)$ for given $\vx\in\BR^d$ and $\vy\in\BR^d$ such that $\BE_\vxi[F(\vx;\vxi)]=f(\vx)$ and $\BE_\vxi[F(\vy;\vxi)]=f(\vy)$, where $\vxi$ is drawn from $\Xi$.
\end{assumption}

\subsection{Randomized Smoothing}

Randomized smoothing is a popular technique in zeroth-order optimization, which established smooth surrogate of the objective functions by the perturbation along with the random directions \cite{duchi2012randomized,gasnikov2022power,shamir2017optimal,yousefian2012stochastic, nesterov2017random,lin2022gradient}.
In this work, we focus on random smoothing based on the uniform distribution on the unit ball \cite{duchi2012randomized,gasnikov2022power,shamir2017optimal}.
That is, we define the smooth surrogate of the objective function $f(\vx)$ as
\begin{align*}
f_\mu(\vx) \triangleq \BE_{\vu\sim \BU(\sB^d)}[f(\vx+\mu\vu)],
\end{align*}
where $\mu>0$ is the smoothing parameter.
The following lemma shows that the surrogate $f_\mu(\vx)$ preserves the convexity, and the difference between $f(\vx)$ and $f_\mu(\vx)$ can be bounded by the smoothing parameter $\mu$ \cite{shamir2017optimal}. 
\begin{lemma}[{\citet[Lemma 8]{shamir2017optimal}}]\label{lem:error}
Under Assumptions \ref{asm:convex} and \ref{asm:lipschitz},
the smooth surrogate $f_\mu(\vx)$ is convex and satisfies
$|f_\mu(\vx)-f(\vx)|\leq L\mu$ for all $\vx\in\BR^d$.
\end{lemma}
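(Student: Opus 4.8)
The plan is to establish the two assertions separately, each reducing to elementary properties of expectation. First I would observe that although Assumptions \ref{asm:convex} and \ref{asm:lipschitz} are phrased for the stochastic component $F(\cdot;\vxi)$, both properties transfer to $f$ itself, since $f(\vx)=\BE_\vxi[F(\vx;\vxi)]$ is an expectation (over $\vxi$) of convex, $L$-Lipschitz functions. Concretely, convexity follows from
\begin{align*}
f(\lambda\vx+(1-\lambda)\vy)=\BE_\vxi[F(\lambda\vx+(1-\lambda)\vy;\vxi)]\le \BE_\vxi[\lambda F(\vx;\vxi)+(1-\lambda)F(\vy;\vxi)]=\lambda f(\vx)+(1-\lambda)f(\vy),
\end{align*}
and Lipschitz continuity from $|f(\vx)-f(\vy)|\le \BE_\vxi|F(\vx;\vxi)-F(\vy;\vxi)|\le L\norm{\vx-\vy}$. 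Hence I may treat $f$ directly as a convex, $L$-Lipschitz function and forget about the $\vxi$ layer.

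For convexity of $f_\mu$, I would note that for each fixed $\vu$ the map $\vx\mapsto f(\vx+\mu\vu)$ is the composition of the convex function $f$ with the affine shift $\vx\mapsto\vx+\mu\vu$, hence convex. Taking the expectation over $\vu\sim\BU(\sB^d)$ is an average of convex functions, so $f_\mu$ is convex as well.

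For the uniform bound, I would pull the absolute value inside the expectation via Jensen's inequality (equivalently, the triangle inequality for integrals) and then invoke $L$-Lipschitz continuity together with $\norm{\vu}\le 1$ on the unit ball:
\begin{align*}
|f_\mu(\vx)-f(\vx)|=\left|\BE_\vu[f(\vx+\mu\vu)-f(\vx)]\right|\le \BE_\vu\left|f(\vx+\mu\vu)-f(\vx)\right|\le \BE_\vu[L\mu\norm{\vu}]\le L\mu,
\end{align*}
which holds for every $\vx\in\BR^d$ and completes the argument.

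The computation is routine; the only points needing care are the measurability and integrability of $\vu\mapsto f(\vx+\mu\vu)$, guaranteed by continuity of $f$ and the fact that $\BU(\sB^d)$ is a probability measure on a compact set, and the clean interchange of expectation with the convexity and Lipschitz inequalities. I expect this to be the ``main obstacle'' only in the sense of stating the integrability justification cleanly, rather than presenting any genuine mathematical difficulty.
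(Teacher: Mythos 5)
Your proof is correct and follows the standard argument: the paper does not reprove this lemma but imports it by citation from \citet[Lemma 8]{shamir2017optimal}, whose proof is exactly the one you give (convexity of $f_\mu$ as an average of affine precompositions of the convex $f$, and the bound $|f_\mu(\vx)-f(\vx)|\le L\mu$ via Jensen and $\norm{\vu}\le 1$). Your preliminary observation that convexity and $L$-Lipschitzness pass from $F(\cdot;\vxi)$ to $f$ under expectation is the right way to reconcile the lemma's hypotheses, which are stated for the stochastic component, with the conclusion about $f_\mu$.
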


The next lemma shows that the surrogate function $f_\mu(\vx)$ is differentiable regardless of whether $f_\mu(\vx)$ is differentiable, and its stochastic gradient can be presented by the form of stochastic finite difference.

\begin{lemma}[{\citet[Lemma 3.4]{flaxman2004online}}]\label{lem:grad}
For the continuous function $f:\BR^d\to\BR$, the gradient of its surrogate $f_\mu$ is given by
\begin{align*}
\nabla f_\mu(\vx) = \BE_{\vv\sim \BU(\BS^{d-1})}\bigg[\frac{d}{2\mu}(f(\vx+\mu\vv)-f(\vx-\mu\vv))\vv\bigg],
\end{align*}
where $\vx\in\BR^d$ and $\mu>0$.
\end{lemma}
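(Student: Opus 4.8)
The plan is to reduce the differentiation of $f_\mu$ to a surface integral over the sphere via the divergence theorem, and then symmetrize. First I would rewrite the ball-average as an integral over the ball $B(\vx,\mu)\triangleq\{\vy\in\BR^d:\norm{\vy-\vx}\le\mu\}$ centered at $\vx$. Writing $V_d$ for the volume of $\sB^d$ and applying the change of variables $\vy=\vx+\mu\vu$,
\[
f_\mu(\vx)=\frac{1}{V_d}\int_{\sB^d} f(\vx+\mu\vu)\,d\vu=\frac{1}{V_d\,\mu^d}\int_{B(\vx,\mu)} f(\vy)\,d\vy.
\]
The purpose of this representation is that the dependence on $\vx$ now sits entirely in the integration domain rather than in the integrand.

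Next I would differentiate in $\vx$. Since shifting $\vx$ rigidly translates the ball, with the boundary moving at unit velocity $\ve_i$ in the $i$-th coordinate direction, the Leibniz rule for a moving domain (equivalently, the divergence theorem applied to the field $f\,\ve_i$) gives
\[
\partial_{x_i}\int_{B(\vx,\mu)} f(\vy)\,d\vy=\int_{\partial B(\vx,\mu)} f(\vy)\,n_i(\vy)\,dS(\vy),
\]
where $\vn(\vy)=(\vy-\vx)/\mu$ is the outward unit normal. Collecting coordinates and parametrizing the boundary by $\vy=\vx+\mu\vv$ with $\vv\in\BS^{d-1}$ (so $\vn=\vv$ and $dS(\vy)=\mu^{d-1}\,dS(\vv)$), and writing $S_{d-1}$ for the surface area of $\BS^{d-1}$, I obtain
\[
\nabla f_\mu(\vx)=\frac{1}{V_d\,\mu}\int_{\BS^{d-1}} f(\vx+\mu\vv)\,\vv\,dS(\vv)=\frac{S_{d-1}}{V_d\,\mu}\,\BE_{\vv\sim\BU(\BS^{d-1})}[f(\vx+\mu\vv)\,\vv].
\]
The standard identity $S_{d-1}=d\,V_d$ collapses the constant to $d/\mu$, yielding the one-sided estimator $\nabla f_\mu(\vx)=\tfrac{d}{\mu}\BE_{\vv}[f(\vx+\mu\vv)\,\vv]$. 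Finally I would symmetrize: since $\BU(\BS^{d-1})$ is invariant under $\vv\mapsto-\vv$, the substitution $\vv\to-\vv$ gives $\BE_{\vv}[f(\vx+\mu\vv)\,\vv]=-\BE_{\vv}[f(\vx-\mu\vv)\,\vv]$, so averaging the two expressions converts the one-sided estimator into the stated two-point form $\tfrac{d}{2\mu}\BE_{\vv}[(f(\vx+\mu\vv)-f(\vx-\mu\vv))\,\vv]$.

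The main obstacle is that $f$ is assumed only continuous, so I may not differentiate under the integral sign in the fixed-domain form $\int_{\sB^d} f(\vx+\mu\vu)\,d\vu$, nor directly invoke the divergence theorem for $f\,\ve_i$, since both steps require $f\in C^1$. I would resolve this by mollification: apply the argument above to the smooth approximation $f^{\epsilon}\triangleq f*\rho_{\epsilon}\in C^\infty$, for which every step is justified, and then let $\epsilon\to0$. Because $f$ is continuous, $f^{\epsilon}\to f$ uniformly on the compact set $\overline{B(\vx,\mu)}$, so both the volume integral defining $f^{\epsilon}_\mu$ and the boundary integral over $\partial B(\vx,\mu)$ converge to their counterparts for $f$. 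This passes the identity to the limit and establishes the formula for every continuous $f$, as claimed.
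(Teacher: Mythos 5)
The paper does not prove this lemma at all---it is imported verbatim as \citet[Lemma 3.4]{flaxman2004online}---so there is no in-paper argument to compare against. Your proof is correct and is essentially the standard argument behind the cited result: rewrite the ball average as a volume integral over a translating ball, differentiate via the divergence theorem to obtain the one-sided identity $\nabla f_\mu(\vx)=\tfrac{d}{\mu}\BE_{\vv}[f(\vx+\mu\vv)\vv]$ (your bookkeeping with $dS(\vy)=\mu^{d-1}dS(\vv)$ and $S_{d-1}=dV_d$ checks out), and then symmetrize using the sign-invariance of $\BU(\BS^{d-1})$. Your mollification step is the right fix for the fact that $f$ is only continuous; the only point worth tightening is the final limit passage: to conclude that $f_\mu$ itself is differentiable with the claimed gradient, you need $\nabla f^{\epsilon}_\mu\to\tfrac{d}{\mu}\BE_{\vv}[f(\cdot+\mu\vv)\vv]$ uniformly on a \emph{neighborhood} of $\vx$ (not just convergence of the boundary integral at the single point $\vx$), which does follow from the locally uniform convergence $f^{\epsilon}\to f$ but should be said explicitly.
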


Based on Lemma \ref{lem:grad}, we define the stochastic finite difference as follows
\begin{align}\label{eq:sfd}
\begin{split}    
\!\!\vg(\vx, \mu; \vv,\vxi) \triangleq \frac{d}{2\mu}(F(\vx+\mu\vv;\vxi)-F(\vx-\mu\vv;\vxi))\vv,\!
\end{split}
\end{align}
where $\vx\in \fX$, $\mu>0$, $\vv\sim \BU(\BS^{d-1})$ and $\vxi\sim \Xi$. 
Under Assumption \ref{asm:oracle}, the stochastic zeroth-order oracle $F(\vx;\vxi)$ is an unbiased estimator of $f(\vx)$, which implies the stochastic finite difference $\vg(\vx, \mu; \vv,\vxi)$ is also an unbiased estimator of $\nabla f_\mu(\vx)$, i.e.,
\begin{align*}
\BE_{\vv\sim \BU(\BS^{d-1}),\,\vxi\sim\Xi}[\vg(\vx, \mu; \vv,\vxi) ] = \nabla f_\mu(\vx).
\end{align*}

\section{Parameter-Free Stochastic Zeroth-Order Optimization}\label{sec:POEM}
We propose our \underline{p}arameter-free st\underline{o}chastic z\underline{e}roth-order \underline{m}ethod (POEM) in Algorithm \ref{alg:poem}.
We established POEM by the framework of projected SGD iteration
\begin{align}\label{eq:iteration}
    \vx_{t+1} = \Pi_{\fX}(\vx_t-\eta_t\vg_t),
\end{align}
where $\eta_t>0$ is the step size, 
and the finite difference
\begin{align}\label{eq:gt-iter}
    \vg_t\triangleq \vg(\vx_t,\mu_t;\vv_t,\vxi_t)     
\end{align}
follows equation (\ref{eq:sfd}) with the smoothing parameter $\mu_t>0$ and random variables $\vv_t\sim \BU(\BS^{d-1})$ and $\vxi_t\sim \Xi$.

We target to make both the step size $\eta_t$ and the smoothing parameter $\mu_t$ in equations (\ref{eq:iteration}) and (\ref{eq:gt-iter}) to be tuning-free and achieve the near-optimal convergence rate, which is more challenging than the stochastic parameter-free first-order methods that mainly focus on the step size.
Inspired by the strategy of DoG \cite{ivgi2023dog}, we schedule the step size $\eta_t$ based on the distance to initial point over the norm of stochastic finite difference.
Specifically, we define the sum of squared norms as
$G_{t}\triangleq\sum_{k=0}^{t}\norm{\vg_k}^2$, the distance to the initial point as $r_t\triangleq\norm{\vx_t-\vx_0}$,
and the maximum distance as $\bar{r}_{t}\triangleq\max _{k \leq t} r_{k} \vee r_{\epsilon}$, where $r_{\epsilon}>0$ is the initial movement.

We then set the step size at the $t$-th iteration as
\begin{align}\label{eq:eta-r-g}
     \eta_{t}\triangleq\frac{\bar{r}_{t}}{\sqrt{G_{t}}}.
\end{align}
In the initialization, we require the movement $r_\epsilon\in (0,D_\fX]$.
In Section \ref{sec:complexity}, we will show that the setting of $r_\epsilon$ only affects the logarithmic term in the convergence rate and it does not heavily affect the performance of the algorithm in practice.

\begin{algorithm}[t]
\caption{POEM} \label{alg:poem}
\begin{algorithmic}[1]
\State \textbf{Input:} $\vx_0\in \fX$,\quad $r_\epsilon\in(0, D_\fX]$,\quad $T\geq 1$  \vskip0.1cm
\State $\bar{r}_{-1}=r_\epsilon$,~~ $G_{-1}=0$ \vskip0.12cm
\State \textbf{for} $t = 0, \dots, T-1$ \textbf{do} \vskip0.12cm
        \State \quad $\bar{r}_t = \max\{\bar{r}_{t-1}, \norm{\vx_t-\vx_0}\}$ \vskip0.15cm   
       \State \quad $\mu_t = \bar{r}_t\sqrt{\dfrac{d}{t+1}}$ \vskip0.12cm
        \State \quad $\vv_t \sim \BU(\BS^{d-1})$,~~
         $\vxi_t \sim \Xi$  \vskip0.15cm
        \State \quad $\vg_t = \dfrac{d}{2\mu_t}(F(\vx_t+\mu_t\vv_t;\vxi_t)-F(\vx_t-\mu_t\vv_t;\vxi_t))\vv_t$ \vskip0.15cm
        \State \quad $G_t = G_{t-1}+\norm{\vg_t}^2$ \vskip0.15cm
        \State \quad $\eta_t =\dfrac{\bar{r}_t}{\sqrt{G_t}}$  \vskip0.15cm
        \State \quad $\vx_{t+1} = \Pi_{\fX}(\vx_t-\eta_t\vg_t)$  \vskip0.12cm
	\State\textbf{end for} 
\State \textbf{Output:} $\bar{\vx}_{\tau_T}$ where $\displaystyle{\tau_T\triangleq \arg\max_{t\leq T}\sum_{k=0}^{t-1}\frac{\bar{r}_k}{\bar{r}_t}}$    \vskip0.12cm
\end{algorithmic}
\end{algorithm}

Moreover, we set the smoothing parameter at the $t$-th as
\begin{align}\label{def:smooth}
    \mu_t \triangleq \bar{r}_t\sqrt{\frac{d}{t+1}},
\end{align}
which is adaptive and larger than the smoothing parameters required in the existing stochastic zeroth-order methods \cite{nesterov2017random,shamir2017optimal,duchi2015optimal,ghadimi2013stochastic,wibisono2012finite,rando2024optimal},\
e.g., \citet{nesterov2017random,shamir2017optimal}
takes $\mu_t=\fO(\sqrt{d/T}\,)$ in their analysis, which depends on the total iteration budget $T$;
\citet{duchi2015optimal} takes $\mu_t=\fO(1/(dt)^2)$, which may be quite small for high dimensional problems.
Recall the smoothing parameter $\mu_t$ appears in the denominator of the stochastic finite difference (\ref{eq:sfd}). Therefore, the larger $\mu_t$ makes the iteration more stable.

\section{The Complexity Analysis}\label{sec:complexity}
We show that POEM (Algorithm \ref{alg:poem}) achieves the near-optimal SZO complexity. 
The detailed proofs for results in this section are deferred to Appendix \ref{apx:complexity}.

Our analysis considers the weighted average of the sequence generated from the iterations of POEM, that is
\begin{align}\label{eq:seq}
    \bar{\vx}_t \triangleq \frac{1}{\sum_{k=0}^{t-1} \bar{r}_{k}} \sum_{k=0}^{t-1} \bar{r}_{k} \vx_k.
\end{align}
Since the objective $f$ is convex (Assumption \ref{asm:convex}), we bound the function value gap by Jensen's inequality as
\begin{align}\label{eq:bound-Jensen}
f(\bar{\vx}_t)-f(\vx_\star) \leq \frac{1}{\sum_{k=0}^{t-1} \bar{r}_{k}}\sum_{k=0}^{t-1} \bar{r}_{k} (f(\vx_k)-f(\vx_\star)).
\end{align}
We combine Lemma \ref{lem:error} and inequality (\ref{eq:bound-Jensen}) to achieve
\begin{align}\label{eq:gapsmooth}
\begin{split}    
     f(\bar{\vx}_t)-f(\vx_\star) 
\leq & \frac{1}{\sum_{k=0}^{t-1} \bar{r}_{k}}\sum_{k=0}^{t-1} \bar{r}_{k} (f_{\mu_k}(\vx_k)-f_{\mu_k}(\vx_\star)+2L\mu_k) \\
\leq & \frac{1}{\sum_{k=0}^{t-1} \bar{r}_{k}} \sum_{k=0}^{t-1} \bar{r}_{k}( \inner{\nabla f_{\mu_k}(\vx_k)}{\vx_k-\vx_\star}+2L\mu_k),
\end{split}
\end{align}
where the second inequality uses the convexity of $f_{\mu_k}$.
We split the sum in the last line of equation (\ref{eq:gapsmooth}) into
\begin{align}\label{eq:threecomponentssmooth}
\begin{split}    
    \underbrace{\sum_{k=0}^{t-1} \bar{r}_k\inner{\vg_k}{\vx_k-\vx_\star}}_{\text{the weighted regret}} + \underbrace{\sum_{k=0}^{t-1} \bar{r}_k\inner{\vDelta_k}{\vx_k-\vx_\star}}_{\text{the noise from }\vg_k} +  \underbrace{\sum_{k=0}^{t-1}2L\bar{r}_k\mu_k}_{\text{the noise from }\mu_k},
\end{split}
\end{align}
where $\vDelta_{k}\triangleq \nabla f_{\mu_k}(\vx_k)-\vg_k$. 
The regret terms are typically included in the complexity analysis of stochastic zeroth-order and first-order  algorithms \cite{shalev2012online, duchi2015optimal, balasubramanian2018zeroth,ivgi2023dog}.
For the proposed POEM, we should address the weighted regret terms scaled by $\{\bar{r}_k\}_{k=0}^{t-1}$.
The noise from $\vg_k$ mainly depends on the difference between the gradient of $f_{\mu_k}$ and its unbiased gradient estimator $\vg_k$.
The noise from~$\mu_k$ is associated with the difference between the objective $f$ and its smooth surrogate  $f_{\mu_k}$, which is not contained in the analysis of stochastic first-order methods.

We now present the upper bounds for the terms of the weighted regret, the noise from $\vg_k$, and the noise from~$\mu_k$ in equation (\ref{eq:gapsmooth}), respectively.
We define 
\begin{align*}
s_t\triangleq \norm{\vx_t-\vx_\star} 
\qquad\text{and}\qquad \bar{s}_t\triangleq \max_{k\leq t} s_k.
\end{align*}
The following lemma \cite{ivgi2023dog} provides the upper bound for the weighted regret for SGD-type iterations, which does not depend on the specific settings of the gradient estimator and the step size and  naturally holds for our POEM (Algorithm \ref{alg:poem}).

\begin{lemma}[{\citet[Lemma 3.4]{ivgi2023dog}}]\label{lem:weighted-regret}
The weighted regret for the iteration scheme (\ref{eq:iteration}) holds that
\begin{align*}
     \sum_{k=0}^{t-1} \bar{r}_k\inner{\vg_k}{\vx_k-\vx_\star} \leq \bar{r}_{t}\left(2 \bar{s}_{t}+\bar{r}_{t}\right) \sqrt{G_{t-1}}.
\end{align*}
\end{lemma}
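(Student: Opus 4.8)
The plan is to run the textbook projected-SGD regret analysis, but carried out with the DoG weights $\bar{r}_k$ and step sizes $\eta_k=\bar{r}_k/\sqrt{G_k}$, and then to \emph{re-center the comparator at} $\vx_0$ so that the uncontrolled ``distance to the optimum'' gets replaced by the controllable radius $\bar{r}_t$. First I would apply nonexpansiveness of $\Pi_{\fX}$ together with $\vx_\star\in\fX$ to the update (\ref{eq:iteration}), giving the one-step inequality $s_{k+1}^2\le s_k^2-2\eta_k\inner{\vg_k}{\vx_k-\vx_\star}+\eta_k^2\norm{\vg_k}^2$. Rearranging, multiplying by $\bar{r}_k\ge 0$, and substituting $\eta_k=\bar{r}_k/\sqrt{G_k}$ yields
\[
\bar{r}_k\inner{\vg_k}{\vx_k-\vx_\star}\le \frac{\sqrt{G_k}}{2}\,(s_k^2-s_{k+1}^2)+\frac{\bar{r}_k^2}{2\sqrt{G_k}}\norm{\vg_k}^2 .
\]
Summing over $k=0,\dots,t-1$ splits the weighted regret into a telescoping distance part and a gradient-norm part. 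The gradient-norm part is disposed of by the AdaGrad-type inequality $\sum_{k=0}^{t-1}\norm{\vg_k}^2/\sqrt{G_k}\le 2\sqrt{G_{t-1}}$ (which follows from $\tfrac{G_k-G_{k-1}}{\sqrt{G_k}}\le 2(\sqrt{G_k}-\sqrt{G_{k-1}})$ and telescoping) together with monotonicity $\bar{r}_k\le\bar{r}_t$, contributing $O(\bar{r}_t^2\sqrt{G_{t-1}})$.

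The delicate part is the distance sum $T_1=\tfrac12\sum_{k=0}^{t-1}\sqrt{G_k}(s_k^2-s_{k+1}^2)$. A naive summation by parts that bounds $s_k^2\le\bar{s}_t^2$ inside the increments $\sqrt{G_k}-\sqrt{G_{k-1}}$ only gives $\tfrac12\bar{s}_t^2\sqrt{G_{t-1}}$, which is too large exactly when the optimum is far from the trajectory ($\bar{s}_t\gg\bar{r}_t$) --- the regime that arises in early iterations with a small $r_\epsilon$. To recover the sharper $\bar{r}_t\bar{s}_t$ scaling I would use the exact identity $s_k^2-s_{k+1}^2=\inner{\vx_k-\vx_{k+1}}{(\vx_k-\vx_\star)+(\vx_{k+1}-\vx_\star)}$ and expand $\vx_j-\vx_\star=(\vx_j-\vx_0)+(\vx_0-\vx_\star)$ to obtain
\[
s_k^2-s_{k+1}^2=(r_k^2-r_{k+1}^2)+2\inner{\vx_k-\vx_{k+1}}{\vx_0-\vx_\star}.
\]
The first piece $\tfrac12\sum_k\sqrt{G_k}(r_k^2-r_{k+1}^2)$ is handled by the same summation by parts as the naive argument, but now against $r_k\le\bar{r}_t$ with $r_0=0$, yielding $O(\bar{r}_t^2\sqrt{G_{t-1}})$. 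The second piece equals $\inner{\sum_{k}\sqrt{G_k}(\vx_k-\vx_{k+1})}{\vx_0-\vx_\star}$, which by Cauchy--Schwarz is at most $\norm{\sum_k\sqrt{G_k}(\vx_k-\vx_{k+1})}\,s_0\le\norm{\sum_k\sqrt{G_k}(\vx_k-\vx_{k+1})}\,\bar{s}_t$.

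The crux is therefore the vector bound $\norm{\sum_{k=0}^{t-1}\sqrt{G_k}(\vx_k-\vx_{k+1})}\le 2\bar{r}_t\sqrt{G_{t-1}}$. I would prove this by Abel summation: the scalar coefficients telescope and sum to zero, so the sum rewrites as $\sum_{k=1}^{t-1}(\sqrt{G_k}-\sqrt{G_{k-1}})(\vx_k-\vx_0)-\sqrt{G_{t-1}}(\vx_t-\vx_0)$, whose norm is at most $\bar{r}_t\big[(\sqrt{G_{t-1}}-\sqrt{G_0})+\sqrt{G_{t-1}}\big]\le 2\bar{r}_t\sqrt{G_{t-1}}$ using the triangle inequality and $\norm{\vx_k-\vx_0}=r_k\le\bar{r}_t$. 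Working with the \emph{actual} iterate differences $\vx_k-\vx_{k+1}$ rather than $\eta_k\vg_k$ is what keeps this step clean despite the projection. Collecting the three contributions gives the weighted-regret bound $\bar{r}_t(2\bar{s}_t+\bar{r}_t)\sqrt{G_{t-1}}$, the only thing to watch being the precise constant on the lower-order $\bar{r}_t^2\sqrt{G_{t-1}}$ term. I expect the main obstacle to be precisely this re-centering: trading the uncontrolled $\bar{s}_t^2$ of the textbook analysis for the product $\bar{r}_t\bar{s}_t$, which is what ultimately makes the step size tuning-free.
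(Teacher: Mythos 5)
Your proposal is essentially correct, but note first that the paper does not prove this lemma at all: it imports it verbatim as Lemma~3.4 of \citet{ivgi2023dog}, so the relevant comparison is with the DoG proof. Both arguments start identically (nonexpansiveness of $\Pi_\fX$, rearrange, multiply by $\bar{r}_k$, substitute $\eta_k=\bar{r}_k/\sqrt{G_k}$, and dispose of $\sum_k\bar r_k^2\norm{\vg_k}^2/\sqrt{G_k}$ by the AdaGrad telescoping bound). Where you diverge is the distance sum $\tfrac12\sum_{k<t}\sqrt{G_k}(s_k^2-s_{k+1}^2)$: DoG applies Abel summation directly and uses the factorization $s_k^2-s_t^2=(s_k+s_t)(s_k-s_t)\le 2\bar s_t\norm{\vx_k-\vx_t}\le 2\bar s_t(r_k+r_t)\le 4\bar r_t\bar s_t$, which dumps the entire telescoped distance contribution into the $\bar r_t\bar s_t$ bucket at once; you instead re-center at $\vx_0$ via $s_k^2-s_{k+1}^2=(r_k^2-r_{k+1}^2)+2\inner{\vx_k-\vx_{k+1}}{\vx_0-\vx_\star}$ and prove a separate vector Abel-summation bound $\norm{\sum_{k<t}\sqrt{G_k}(\vx_k-\vx_{k+1})}\le 2\bar r_t\sqrt{G_{t-1}}$. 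Your route is a genuine and correct alternative --- arguably more transparent about \emph{why} the $\bar s_t$ dependence is only linear --- and every individual step checks out.

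The one concrete issue is the constant you yourself flag. Adding up your three contributions gives $2\bar r_t\bar s_t\sqrt{G_{t-1}}$ (from the inner-product piece, via $s_0\le\bar s_t$) plus $\bar r_t^2\sqrt{G_{t-1}}$ (gradient-norm part) plus an extra $\tfrac12\bar r_t^2\sqrt{G_{t-1}}$ (the $r_k^2-r_{k+1}^2$ piece), i.e.\ $\bar r_t\bigl(2\bar s_t+\tfrac32\bar r_t\bigr)\sqrt{G_{t-1}}$ rather than the stated $\bar r_t\bigl(2\bar s_t+\bar r_t\bigr)\sqrt{G_{t-1}}$; since $\bar r_t\le 2\bar s_t$ is not guaranteed (e.g.\ when $r_\epsilon$ dominates), the weaker bound does not formally imply the stated one. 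This is immaterial downstream --- Proposition~\ref{prop} already absorbs all such constants into a factor of $16$ --- but to recover the exact constant you would need DoG's one-shot bound on $s_k^2-s_t^2$ in place of your re-centering, which avoids spawning the extra $r_k^2$ term.
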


For the noise from $\vg_k$, we first establish the upper bounds of $\norm{\vg_t}$ and $\BE[\norm{\vg_t}^2]$.

\begin{lemma}[{\citet[Lemma 10]{shamir2017optimal}}]\label{lem:upper-bound-for-g}
Under Assumption \ref{asm:lipschitz}, POEM (Algorithm \ref{alg:poem}) holds that
\begin{align*}
    \norm{\vg_t}\leq Ld \quad\text{and}\quad \BE[\norm{\vg_t}^2]\leq cL^2d,
\end{align*}
where $c>0$ is a numerical constant.
\end{lemma}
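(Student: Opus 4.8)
The first inequality is deterministic and follows directly from the Lipschitz assumption. Since $\norm{\vv_t}=1$, equation~(\ref{eq:gt-iter}) gives $\norm{\vg_t}=\frac{d}{2\mu_t}\,|F(\vx_t+\mu_t\vv_t;\vxi_t)-F(\vx_t-\mu_t\vv_t;\vxi_t)|$. The two evaluation points $\vx_t\pm\mu_t\vv_t$ are separated by $\norm{2\mu_t\vv_t}=2\mu_t$, so Assumption~\ref{asm:lipschitz} bounds the scalar difference by $2L\mu_t$, and therefore $\norm{\vg_t}\le \frac{d}{2\mu_t}\cdot 2L\mu_t = Ld$.

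For the second moment, plugging the deterministic bound into $\BE[\norm{\vg_t}^2]$ only yields $L^2d^2$; the missing factor of $d$ must be recovered from the randomness of the direction $\vv_t$, and this is where the sharp dimension dependence originates. The plan is to condition on the history (hence on $\vx_t$ and $\mu_t$) and on the realization of $\vxi_t$, and to view the finite difference as an increment of the function $\phi(\vv)\triangleq F(\vx_t+\mu_t\vv;\vxi_t)$ on the sphere $\BS^{d-1}$. By Assumption~\ref{asm:lipschitz}, $\phi$ is $L\mu_t$-Lipschitz. Because $\vv_t$ and $-\vv_t$ share the same law, the difference $\phi(\vv_t)-\phi(-\vv_t)=F(\vx_t+\mu_t\vv_t;\vxi_t)-F(\vx_t-\mu_t\vv_t;\vxi_t)$ has zero mean, and a standard symmetrization gives $\BE_{\vv_t}[(\phi(\vv_t)-\phi(-\vv_t))^2]\le 4\,\Var_{\vv_t}[\phi(\vv_t)]$.

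It then remains to bound $\Var_{\vv_t}[\phi(\vv_t)]$ by invoking the spherical Poincaré inequality: for an $\ell$-Lipschitz function on $\BS^{d-1}$ the variance is at most $\ell^2/(d-1)$, since the spectral gap of the Laplace--Beltrami operator on the unit sphere equals $d-1$. Taking $\ell=L\mu_t$ and using $\norm{\vv_t}=1$, we obtain $\BE_{\vv_t}[\norm{\vg_t}^2]=\frac{d^2}{4\mu_t^2}\,\BE_{\vv_t}[(\phi(\vv_t)-\phi(-\vv_t))^2]\le \frac{d^2}{4\mu_t^2}\cdot\frac{4L^2\mu_t^2}{d-1}\le cL^2d$ for $d\ge 2$ and a numerical constant $c$; the smoothing parameter $\mu_t$ cancels exactly, and taking expectation over $\vxi_t$ preserves the bound by the tower property. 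The \textbf{main obstacle} is the Poincaré/concentration step for a merely Lipschitz (possibly non-differentiable) $\phi$: the cleanest route is to first mollify $\phi$ by convolving with a narrow smoothing kernel on the sphere, which does not increase its Lipschitz constant and perturbs the variance negligibly, apply the smooth Poincaré inequality, and then pass to the limit; alternatively one may invoke Lévy's concentration of measure on the sphere directly. This is precisely the step that upgrades the crude $d^2$ bound to the optimal linear-in-$d$ bound.
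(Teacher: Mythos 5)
Your proof is correct, and it reaches the crucial $\fO(d)$ second-moment bound by a genuinely different key lemma than the paper. The first (deterministic) bound and the reduction of $\BE[\norm{\vg_t}^2]$ to the variance of the $L\mu_t$-Lipschitz map $\phi(\vv)=F(\vx_t+\mu_t\vv;\vxi_t)$ on $\BS^{d-1}$ are essentially identical to the paper's argument: your symmetrization $\BE[(\phi(\vv)-\phi(-\vv))^2]\le 4\Var[\phi(\vv)]$ is exactly the paper's step of centering by a constant $\alpha=\BE[\phi(\vv)]$, applying $(a-b)^2\le 2a^2+2b^2$, and using the symmetry of $\BU(\BS^{d-1})$. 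Where you diverge is in bounding that variance: the paper invokes L\'evy-type concentration on the sphere (its Lemma \ref{lem:shamir-concentration}, a sub-Gaussian tail bound for Lipschitz functions) and integrates the tail via $\BE[Y]=\int_0^\infty\BP(Y>y)\,\mathrm{d}y$ to get $\Var[\phi]\le 2L^2\mu_t^2/(cd)$, whereas you apply the spherical Poincar\'e inequality with spectral gap $d-1$ to get $\Var[\phi]\le L^2\mu_t^2/(d-1)$ directly. Your route buys an explicit constant ($\BE[\norm{\vg_t}^2]\le d^2L^2/(d-1)\le 2L^2d$ for $d\ge 2$) and skips the tail integration; the paper's route avoids the regularity issue you correctly flag (Poincar\'e for merely Lipschitz, non-differentiable $\phi$, handled by Rademacher plus mollification or by falling back on L\'evy concentration) and reuses a lemma it already needs elsewhere. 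Two small points to tidy: the case $d=1$ is not covered by the spectral-gap argument but follows trivially from the deterministic bound $\norm{\vg_t}\le Ld$; and the conditioning on the history should be made explicit since $\vx_t$ and $\mu_t$ are random (the bound is uniform in $\vxi_t$ and in the history because $L$ is uniform over $\vxi$ by Assumption \ref{asm:lipschitz}, so the tower property does close the argument as you state).
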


\begin{remark}
This lemma provides the upper bounds that depend on $\fO(d)$ for both $\norm{\vg_t}$ and $\BE[\norm{\vg_t}^2]$, which is important to achieve the optimal dependence on $d$ in the final convergence rates.
Compared with the proof of \citet[Lemma 10]{shamir2017optimal}, this paper only considers the Euclidean norm and we provide a more concise analsis without using the fourth-order moment of the gradient estimator (see Appendix \ref{apx:upper-bound-for-g}).
\end{remark}

According to Lemma \ref{lem:upper-bound-for-g},
we can provide the upper bound for $\Norm{\vDelta_k}=\Norm{\nabla f_{\mu_k}(\vx_k)-\vg_k}$.
Consequently, we apply the concentration inequality for martingale differences \cite{howard2021time, ivgi2023dogsgdsbestfriend} to bound the noise from $\vg_k$.

\begin{lemma}\label{lem:noise-from-g}
Under Assumptions \ref{asm:lipschitz} and \ref{asm:oracle}, for all \mbox{$\delta\in (0,1)$}, POEM (Algorithm \ref{alg:poem}) holds that
\begin{align*}
    \BP\bigg(\exists t\leq T: \bigg|\sum_{k=0}^{t-1}\bar{r}_k\inner{\vDelta_k}{\vx_k-\vx_\star}\bigg|\geq b_t\bigg)\leq \delta,
\end{align*}
where $b_t=8 \bar{r}_{t-1} \bar{s}_{t-1} \sqrt{\theta_{t, \delta} G_{t-1}+4L^2d^2\theta_{t, \delta}^{2}}\,$, 
and $\theta_{t, \delta}\triangleq\log (60 \log(6 t/\delta))$.
\end{lemma}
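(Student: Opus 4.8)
\section*{Proof proposal for Lemma \ref{lem:noise-from-g}}

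The plan is to recognize the sum $\sum_{k=0}^{t-1}\bar{r}_k\inner{\vDelta_k}{\vx_k-\vx_\star}$ as a martingale and apply a time-uniform (anytime) concentration inequality, exactly mirroring the noise analysis of DoG \cite{ivgi2023dog} but with the increment and variance bounds supplied by the zeroth-order quantities of POEM. First I would fix the filtration $\fF_{k}\triangleq\sigma(\vv_0,\dots,\vv_{k},\vxi_0,\dots,\vxi_{k})$ and observe that, by construction of Algorithm \ref{alg:poem}, the iterate $\vx_k$ and hence $\bar{r}_k$, $s_k=\norm{\vx_k-\vx_\star}$, and $\mu_k$ are all $\fF_{k-1}$-measurable (predictable), while $\vg_k$ depends on the fresh samples $\vv_k,\vxi_k$. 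The unbiasedness established in Section \ref{sec:pre}, namely $\BE[\vg_k\mid\fF_{k-1}]=\nabla f_{\mu_k}(\vx_k)$, then gives $\BE[\vDelta_k\mid\fF_{k-1}]=0$, so that $X_k\triangleq\bar{r}_k\inner{\vDelta_k}{\vx_k-\vx_\star}$ is a real-valued martingale difference sequence adapted to $\{\fF_k\}$.

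Next I would record the two ingredients any Freedman-type time-uniform bound requires: an almost-sure range bound and a conditional-variance proxy. For the range, Cauchy--Schwarz gives $|X_k|\le \bar{r}_k s_k\norm{\vDelta_k}$, and Lemma \ref{lem:upper-bound-for-g} together with Jensen's inequality yields $\norm{\vDelta_k}\le\norm{\nabla f_{\mu_k}(\vx_k)}+\norm{\vg_k}\le\BE[\norm{\vg_k}\mid\fF_{k-1}]+\norm{\vg_k}\le 2Ld$; using the monotonicity of $\bar{r}_k$ and of $\bar{s}_k$ this produces $\max_{k\le t-1}|X_k|\le 2Ld\,\bar{r}_{t-1}\bar{s}_{t-1}$, which is the source of both the prefactor $\bar{r}_{t-1}\bar{s}_{t-1}$ and the range contribution $4L^2d^2$ inside $b_t$. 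For the variance, since $\vDelta_k$ is the centered version of $\vg_k$, the conditional variance is at most the conditional second moment, so $\BE[X_k^2\mid\fF_{k-1}]\le \bar{r}_k^2 s_k^2\,\BE[\norm{\vg_k}^2\mid\fF_{k-1}]\le \bar{r}_{t-1}^2\bar{s}_{t-1}^2\,\BE[\norm{\vg_k}^2\mid\fF_{k-1}]$, again pulling out $\bar{r}_{t-1}^2\bar{s}_{t-1}^2$ by monotonicity.

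Finally I would invoke the time-uniform martingale concentration inequality of \cite{howard2021time}, in the empirical-variance form used by \cite{ivgi2023dogsgdsbestfriend}, applied to $\{X_k\}$ with range proxy $2Ld\,\bar{r}_{t-1}\bar{s}_{t-1}$ and the variance process above. After factoring out $\bar{r}_{t-1}\bar{s}_{t-1}$, the variance term contributes $\theta_{t,\delta}G_{t-1}$ and the range term contributes $4L^2d^2\theta_{t,\delta}^2$, with the numerical constant $8$ inherited from the cited inequality; this yields precisely $b_t=8\bar{r}_{t-1}\bar{s}_{t-1}\sqrt{\theta_{t,\delta}G_{t-1}+4L^2d^2\theta_{t,\delta}^2}$ uniformly over $t\le T$ with probability at least $1-\delta$. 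The main obstacle I anticipate is not the martingale structure itself but the bookkeeping that converts the \emph{predictable} quadratic variation $\sum_k\BE[\norm{\vg_k}^2\mid\fF_{k-1}]$ into the \emph{observed} $G_{t-1}=\sum_k\norm{\vg_k}^2$ appearing in $b_t$: this is exactly the point where one must lean on the empirical-variance (rather than predictable-variance) version of the anytime inequality, and where one must be careful that pulling the random weights $\bar{r}_k,s_k$ out of the sum via monotonicity does not disturb the predictability needed for the supermartingale argument to remain valid.
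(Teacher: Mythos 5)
Your overall strategy is exactly the paper's: view $\vDelta_k$ as a martingale difference via the unbiasedness of $\vg_k$, bound $\norm{\vDelta_k}\le 2Ld$ using Lemma~\ref{lem:upper-bound-for-g}, and invoke the time-uniform empirical-Bernstein concentration from \citet{howard2021time} as packaged in the DoG line of work. The ingredients you list (filtration, $\BE[\vDelta_k\mid\fF_{k-1}]=0$, the $2Ld$ range bound, the empirical variance collapsing to $G_{t-1}$) all appear in the paper's proof in Appendix~\ref{apx:noise-from-g}.

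However, there is one step that would fail as you have written it, and it is precisely the obstacle you flag at the end without resolving. You propose to apply the concentration inequality directly to $X_k=\bar{r}_k\inner{\vDelta_k}{\vx_k-\vx_\star}$ with ``range proxy'' $2Ld\,\bar{r}_{t-1}\bar{s}_{t-1}$. That quantity is random and grows with $t$; a Freedman/Howard-type bound requires a deterministic (or at least predictable and fixed-in-advance) almost-sure bound $c$ on the increments, so you cannot simply factor $\bar{r}_{t-1}\bar{s}_{t-1}$ out of the increments by monotonicity after the fact. The paper closes this gap by normalizing: it sets $X_k=\frac{1}{\bar{s}_k}\inner{\vDelta_k}{\vx_k-\vx_\star}$ and $\hat{X}_k=\frac{1}{\bar{s}_k}\inner{\nabla f_{\mu_k}(\vx_k)}{\vx_k-\vx_\star}$, so that $|X_k|\le\norm{\vDelta_k}\le 2Ld$ and $|\hat X_k|\le Ld$ are bounded by the \emph{deterministic} constant $c=2Ld$, and then treats $y_k=\bar{r}_k\bar{s}_k$ as a weight sequence inside Lemma~\ref{lem:dog-martingale} (\citet[Lemma D.2]{ivgi2023dog}), whose conclusion holds uniformly over \emph{all} nonnegative nondecreasing weight sequences simultaneously --- that uniformity is what legitimizes using the random, data-dependent weights $\bar{r}_k\bar{s}_k$. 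The same normalization also settles your predictable-versus-observed variance worry for free: with this choice of $\hat{X}_k$ one has $(X_k-\hat{X}_k)^2=\frac{1}{\bar{s}_k^2}\inner{\vg_k}{\vx_k-\vx_\star}^2\le\norm{\vg_k}^2$, so the empirical variance term in the cited lemma is bounded by $G_{t-1}$ directly, with no conversion from a predictable quadratic variation needed. With that modification your argument matches the paper's; without it, the invocation of the concentration inequality is not justified.
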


The following lemma upper bounds the noise from $\mu_k$.
\begin{lemma}\label{lem:noise-from-mu}
The settings of POEM (Algorithm \ref{alg:poem}) holds 
\begin{align*}
\sum_{k=0}^{t-1} 2L\bar{r}_k\mu_k \leq 4L\bar{r}_{t-1}^2\sqrt{dt},
\end{align*}
where $\log_+(\cdot)\triangleq \log(\cdot)+1$.
\end{lemma}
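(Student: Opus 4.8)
The plan is to prove the bound by a direct computation, since both the smoothing parameter $\mu_k$ and the weight $\bar{r}_k$ are given in closed form by the algorithm. First I would substitute the definition $\mu_k=\bar{r}_k\sqrt{d/(k+1)}$ from equation (\ref{def:smooth}) into the sum, which collapses the product $\bar{r}_k\mu_k$ into $\bar{r}_k^2\sqrt{d/(k+1)}$ and lets me pull the factor $\sqrt{d}$ outside:
\begin{align*}
\sum_{k=0}^{t-1} 2L\bar{r}_k\mu_k = 2L\sqrt{d}\sum_{k=0}^{t-1}\frac{\bar{r}_k^2}{\sqrt{k+1}}.
\end{align*}

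Next I would use the monotonicity of the sequence $\{\bar{r}_k\}$. By its definition in Algorithm \ref{alg:poem}, the update $\bar{r}_k=\max\{\bar{r}_{k-1},\norm{\vx_k-\vx_0}\}$ makes $\{\bar{r}_k\}$ nondecreasing, so $\bar{r}_k\le \bar{r}_{t-1}$ for every $k\le t-1$. This lets me factor the largest weight out of the sum and reduce the problem to bounding a purely numerical series:
\begin{align*}
2L\sqrt{d}\sum_{k=0}^{t-1}\frac{\bar{r}_k^2}{\sqrt{k+1}} \le 2L\sqrt{d}\,\bar{r}_{t-1}^2\sum_{k=0}^{t-1}\frac{1}{\sqrt{k+1}}.
\end{align*}

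Finally I would bound the remaining sum $\sum_{k=0}^{t-1}1/\sqrt{k+1}=\sum_{j=1}^{t}1/\sqrt{j}$ by the standard integral comparison: since $x^{-1/2}$ is decreasing, $1/\sqrt{j}\le \int_{j-1}^{j}x^{-1/2}\,dx$, and summing over $j=1,\dots,t$ gives $\sum_{j=1}^{t}1/\sqrt{j}\le \int_0^t x^{-1/2}\,dx = 2\sqrt{t}$. Chaining the three displays yields $\sum_{k=0}^{t-1}2L\bar{r}_k\mu_k \le 4L\bar{r}_{t-1}^2\sqrt{dt}$, exactly as claimed. I expect no genuine obstacle here: the only substantive ingredients are the monotonicity of $\{\bar{r}_k\}$, which is immediate from the $\max$ in the update, and the elementary integral estimate of the $1/\sqrt{j}$ tail. (The definition $\log_+(\cdot)\triangleq\log(\cdot)+1$ appearing in the statement plays no role in this bound and seems to be vestigial.)
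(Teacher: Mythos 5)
Your proof is correct and follows essentially the same route as the paper: substitute $\mu_k=\bar{r}_k\sqrt{d/(k+1)}$, pull out $\bar{r}_{t-1}^2$ by monotonicity of $\{\bar{r}_k\}$, and bound $\sum_{j=1}^{t}1/\sqrt{j}\le 2\sqrt{t}$ by integral comparison. If anything, your bookkeeping is slightly more careful than the paper's own write-up, whose intermediate displays drop a factor of $\bar{r}_{t-1}$ that the final stated bound $4L\bar{r}_{t-1}^2\sqrt{dt}$ requires; you are also right that the $\log_+$ definition in the statement is vestigial.
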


\begin{remark}
The settings of smoothing parameter (\ref{def:smooth}) implies  that $\mu_k=\fO(\sqrt{d/k}\,)$, then we can use an integral to establish an upper bound for the series to prove Lemma \ref{lem:noise-from-mu}.
We present the detailed proof in Appendix \ref{apx:noise-from-mu}.   
\end{remark}

Combining Lemmas \ref{lem:weighted-regret}, \ref{lem:noise-from-g}, and \ref{lem:noise-from-mu} with equations (\ref{eq:gapsmooth}) and~(\ref{eq:threecomponentssmooth}), we establish the upper bound of the function value gap as follows.

\begin{proposition}\label{prop}
Under Assumptions \ref{asm:convex},   
 \ref{asm:lipschitz} and \ref{asm:oracle}, for all $\delta\in (0, 1)$,
 POEM (Algorithm \ref{alg:poem}) holds that  
\begin{align*}    
 f(\bar{\vx}_t)\!-\!f(\vx_\star) 
\!\leq\!\frac{16\theta_{t,\delta}(\bar{r}_t\!+\!s_0)(\sqrt{G_{t-1}}\!+\!Ld+\!L\sqrt{dt}\,)}{\sum_{k=0}^{t-1}\bar{r}_k/\bar{r}_t},
\end{align*}
with probability at least $1-\delta$, where $\theta_{t,\delta}=\log(60\log(t/\delta))$.
\end{proposition}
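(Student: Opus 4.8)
The plan is to combine the three ingredient lemmas with the decomposition already set up in the excerpt. Starting from inequality~\eqref{eq:gapsmooth}, the numerator is split as in~\eqref{eq:threecomponentssmooth} into the weighted regret, the noise from $\vg_k$, and the noise from $\mu_k$. I would bound these three terms separately using Lemma~\ref{lem:weighted-regret}, Lemma~\ref{lem:noise-from-g}, and Lemma~\ref{lem:noise-from-mu}, respectively. Since Lemma~\ref{lem:noise-from-g} only controls the noise from $\vg_k$ on an event of probability at least $1-\delta$, the resulting inequality will hold with probability at least $1-\delta$, matching the statement; the other two bounds are deterministic.

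The heart of the argument is then a sequence of crude but clean simplifications that unify the three upper bounds into a common shape. First I would replace every occurrence of $\bar{r}_{t-1}$ by $\bar{r}_t$, using monotonicity of the maximal distance. Next, the key reduction is to eliminate the dependence on $\bar{s}_t$: by the triangle inequality $s_k=\norm{\vx_k-\vx_\star}\le\norm{\vx_k-\vx_0}+\norm{\vx_0-\vx_\star}= r_k+s_0\le\bar{r}_t+s_0$ for every $k\le t$, so $\bar{s}_t\le\bar{r}_t+s_0$. This converts the regret bound $\bar{r}_t(2\bar{s}_t+\bar{r}_t)\sqrt{G_{t-1}}$ into a multiple of $\bar{r}_t(\bar{r}_t+s_0)\sqrt{G_{t-1}}$ and similarly rewrites the noise-from-$\vg_k$ bound. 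For the latter I would also apply subadditivity of the square root, $\sqrt{\theta_{t,\delta}G_{t-1}+4L^2d^2\theta_{t,\delta}^2}\le\sqrt{\theta_{t,\delta}}\,\sqrt{G_{t-1}}+2Ld\,\theta_{t,\delta}$, and then use $\theta_{t,\delta}\ge1$ to pull out a single factor $\theta_{t,\delta}$, obtaining a bound proportional to $\theta_{t,\delta}\,\bar{r}_t(\bar{r}_t+s_0)(\sqrt{G_{t-1}}+Ld)$. Finally, the noise from $\mu_k$ is already $4L\bar{r}_{t-1}^2\sqrt{dt}\le 4\bar{r}_t(\bar{r}_t+s_0)\,L\sqrt{dt}$ after bounding $\bar{r}_t^2\le\bar{r}_t(\bar{r}_t+s_0)$.

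Collecting the three contributions, each is a constant multiple of $\bar{r}_t(\bar{r}_t+s_0)$ times one of $\sqrt{G_{t-1}}$, $Ld$, or $L\sqrt{dt}$; factoring out $\bar{r}_t(\bar{r}_t+s_0)$ and using $\theta_{t,\delta}\ge1$ to absorb the constants into a single coefficient $16\theta_{t,\delta}$ yields an upper bound on the numerator of the form $16\theta_{t,\delta}\,\bar{r}_t(\bar{r}_t+s_0)(\sqrt{G_{t-1}}+Ld+L\sqrt{dt})$. Dividing by $\sum_{k=0}^{t-1}\bar{r}_k$ and writing the denominator as $\sum_{k=0}^{t-1}\bar{r}_k/\bar{r}_t$, which pulls the extra $\bar{r}_t$ out of the numerator, gives exactly the claimed inequality. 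I expect the only delicate point to be the bookkeeping of constants: each crude bound must be loosened just enough so that the various coefficients (the $3$ from the regret, the $8$ and the $2Ld$ split from the noise, the $4$ from the smoothing) all fit under the clean prefactor $16\theta_{t,\delta}$, and one must verify that $\theta_{t,\delta}\ge1$ holds in the relevant regime so that $\sqrt{\theta_{t,\delta}}\le\theta_{t,\delta}$ and the lower-order terms can be merged without breaking the stated constant.
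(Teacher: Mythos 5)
Your proposal is correct and follows essentially the same route as the paper's own proof: the same decomposition into weighted regret, noise from $\vg_k$, and noise from $\mu_k$, the same three lemmas, and the same simplifications ($\bar{r}_{t-1}\leq\bar{r}_t$, $\bar{s}_t\leq\bar{r}_t+s_0$, $\sqrt{a^2+b^2}\leq a+b$, and $\theta_{t,\delta}\geq 1$ to absorb constants into the prefactor $16\theta_{t,\delta}$). No meaningful differences to report.
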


We then consider the lower bound for $\sum_{k=0}^{t-1}\bar{r}_k/\bar{r}_t$.
We first introduce the following basic lemma \cite{ivgi2023dog}.
\begin{lemma}[{\citet[Lemma 3.7]{ivgi2023dog}}]\label{lem:dog-tau}
We let $a_{0}, a_{1}, \dots, a_{T} $ be a positive non-decreasing sequence, then
\begin{align*}
\max _{t \leq T} \sum_{i<t} \frac{a_{i}}{a_{t}} \geq \frac{1}{{\rm e}}\bigg(\frac{T}{\log _{+}(a_{T} / a_{0})}-1\bigg),    
\end{align*}
where $\log_+(\cdot):=\log(\cdot)+1.$
\end{lemma}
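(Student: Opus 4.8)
The plan is to prove Lemma~\ref{lem:dog-tau} by identifying a good candidate index $t$ for which the sum $\sum_{i<t} a_i/a_t$ is provably large, rather than evaluating the maximum directly. The natural strategy is a telescoping/pigeonhole argument on the ratios $a_T/a_0$: since the sequence is positive and non-decreasing, we can partition the indices $\{0,1,\dots,T\}$ according to the geometric scale of $a_i$, and in at least one scale the terms must be numerous. Concretely, I would first handle the trivial cases (e.g. if $a_T=a_0$ so all terms are equal, then choosing $t=T$ gives $\sum_{i<T}a_i/a_T = T \geq \frac{1}{\mathrm{e}}(T-1)$, since $\log_+(1)=1$), and then focus on the generic case $a_T>a_0$.

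For the main case, the key idea I would pursue is a contradiction/averaging argument. Suppose that for \emph{every} $t\leq T$ we had $\sum_{i<t} a_i/a_t$ small, say strictly less than some bound $B$. Then $\sum_{i<t} a_i < B\,a_t$ for all $t$, which is a self-referential inequality relating partial sums $S_t \triangleq \sum_{i\le t} a_i$ to the increments $a_t = S_t - S_{t-1}$. Rewriting $S_{t-1} < B(S_t - S_{t-1})$ yields $S_t > (1 + 1/B)\,S_{t-1}$, i.e. the partial sums grow geometrically with ratio $1+1/B$. Iterating this from $t=1$ to $T$ gives $S_T > (1+1/B)^T S_0 = (1+1/B)^T a_0$. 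On the other hand, because the sequence is non-decreasing, $S_T = \sum_{i=0}^T a_i \leq (T+1)a_T$, which is only a mild upper bound; the sharper route is to instead bound a single ratio $a_T/a_0$. The cleaner version compares $a_t$ rather than $S_t$: from $a_t > S_{t-1}/B \geq a_{t-1}/B$-type reasoning one extracts geometric growth of the $a_t$ themselves, forcing $a_T/a_0 > (1+1/B)^{T}$ (up to index shifts), hence $T < B\log_+(a_T/a_0)$ after using $\log(1+x)\geq x/(1+x)$ or the bound $(1+1/B)^T \leq \mathrm{e}^{T/B}$. Choosing $B$ to be the claimed right-hand side then produces the contradiction.

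The step I expect to be the main obstacle is pinning down the exact constants so that the factor $1/\mathrm{e}$ and the $\log_+ = \log(\cdot)+1$ (rather than plain $\log$) come out precisely, including the ``$-1$'' correction term. The inequality $\log_+(a_T/a_0)=\log(a_T/a_0)+1$ is engineered to absorb the boundary contribution of the smallest scale and to make the bound nontrivial even when $a_T/a_0$ is close to $1$; getting the geometric-growth exponent to line up with $T/\log_+(a_T/a_0)$ rather than $T/\log(a_T/a_0)$ requires care about whether one counts $T$ or $T+1$ terms and where the base level $a_0$ enters. I would reconcile this by using the elementary estimate $1+x \le \mathrm{e}^{x}$ together with $\mathrm{e}\cdot x \ge 1+\log x$ for $x>0$ (equivalently $x \ge \tfrac{1}{\mathrm e}(1+\log x)$ is false, so the correct form is $\log x \le x/\mathrm e \cdot \mathrm e$, to be chosen so the algebra closes), applied to $x$ a suitable ratio; tracking these elementary inequalities carefully is where the real work lies, whereas the structural geometric-growth argument is routine.

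Since this lemma is quoted as \citet[Lemma 3.7]{ivgi2023dog}, I would note that it is a known result and cite it, so the above sketch is the self-contained argument one would supply if reproving it; in the paper itself invoking the citation suffices.
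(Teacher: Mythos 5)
The paper does not actually prove this lemma --- it is quoted verbatim from \citet[Lemma~3.7]{ivgi2023dog} --- so there is no in-paper proof to match; what matters is whether your sketch would close on its own. As written, it would not. Your main line of attack (assume $\sum_{i<t}a_i/a_t < B$ for all $t$ and extract geometric growth) contains a step that is simply false: you claim this forces $a_T/a_0 > (1+1/B)^T$, i.e.\ geometric growth of the $a_t$ themselves. A constant sequence $a_0=\dots=a_T$ has $a_T/a_0=1$, so no such growth occurs; what actually grows geometrically is the partial sum $S_t=\sum_{i\le t}a_i$, via $S_t\ge(1+1/B)S_{t-1}$. If you repair this and compare $S_T\ge(1+1/B)^T a_0$ against an upper bound on $S_T$, the only available bounds are $S_T\le (T+1)a_T$ or $S_T\le (B+1)a_T$, and either way you end up with $T\le (B+1)\bigl(\log(a_T/a_0)+\log(T+1)\bigr)$ or the self-referential $T\le (B+1)\bigl(\log(a_T/a_0)+\log(B+1)\bigr)$. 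Both carry an extra logarithmic term in the denominator that the stated bound does not have, so this route proves a strictly weaker statement; the constants $1/{\rm e}$ and $\log_+(a_T/a_0)=\log(a_T/a_0)+1$ cannot be recovered from it, and your own text concedes you could not make the elementary inequalities close (the passage about ``${\rm e}\cdot x\ge 1+\log x$'' is garbled).

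The idea that does work is the one you mention in passing in your first paragraph and then abandon: pigeonhole on geometric scales. Put index $t$ into bucket $j=\lfloor\log(a_t/a_0)\rfloor$; since $0\le\log(a_t/a_0)\le\log(a_T/a_0)$, there are at most $\lfloor\log(a_T/a_0)\rfloor+1\le\log_+(a_T/a_0)$ buckets, so some bucket $B_j$ contains at least $(T+1)/\log_+(a_T/a_0)$ indices. Taking $t^\star=\max B_j$, every $i\in B_j$ with $i<t^\star$ satisfies $a_i/a_{t^\star}\ge {\rm e}^{j}/{\rm e}^{j+1}={\rm e}^{-1}$, whence
\begin{align*}
\max_{t\le T}\sum_{i<t}\frac{a_i}{a_t}\;\ge\;\sum_{i<t^\star}\frac{a_i}{a_{t^\star}}\;\ge\;\frac{|B_j|-1}{{\rm e}}\;\ge\;\frac{1}{{\rm e}}\bigg(\frac{T+1}{\log_+(a_T/a_0)}-1\bigg)\;\ge\;\frac{1}{{\rm e}}\bigg(\frac{T}{\log_+(a_T/a_0)}-1\bigg).
\end{align*}
This is a three-line argument with no constant-chasing; the factor $1/{\rm e}$ is exactly the within-bucket ratio and the ``$-1$'' accounts for excluding $t^\star$ itself. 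You should promote that observation from an aside to the proof and drop the partial-sum contradiction entirely.
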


Noticing that the sequence $\{\bar{r}_t\}_{t=0}^T$ is  positive and non-deceasing. 
Applying Lemma \ref{lem:dog-tau} with $a_t=\bar{r}_t$, we have
\begin{align}\label{eq:tau}
  \max_{ t\leq T}\sum_{k=0}^{t-1} \frac{\bar{r}_k}{\bar{r}_t}=   \sum_{k=0}^{\tau_T-1}\frac{\bar{r}_k}{\bar{r}_{\tau_T}}\geq \Omega\bigg(\frac{T}{\log_+({\bar{r}_{\tau_T}}/{r_\epsilon})}\bigg),
\end{align}
where $\tau_T \triangleq \argmax_{1\leq t\leq T}\sum_{k=0}^{t-1} {\bar{r}_k}/{\bar{r}_t}$.

We now consider the probability space $(\Omega_0, \fF_0, \BP)$, where $\Omega_0$ is the sample space of the Algorithm \ref{alg:poem} for given $\vx_0$ and~$r_\epsilon$, $\fF_0$ is the sigma field generated by the random sequences $\{\vv_t\}_{t=0}^T$ and $\{\vxi_t\}_{t=0}^T$, 
and $\BP:\fF_0\to[0,1]$ assigns probabilities to events in $\fF_0$.
Next, we define
\begin{align*}
   \Omega_\delta\triangleq \bigg\{\omega\in \Omega_0:\forall t\leq T,~ \bigg|\sum_{k=0}^{t-1}\bar{r}_k\inner{\vDelta_k}{\vx_k-\vx_\star}\bigg|< b_t\bigg\}.
\end{align*}
Lemma \ref{lem:noise-from-g} means
$\BP(\Omega_\delta) \geq 1-\delta$,
for all $\delta\in(0,1)$, which means $\Omega_\delta\to \Omega_0$ in probability as $\delta\to 0$. 
We then define $\fF_\delta\triangleq \{A: A\subset \Omega_\delta\} \cap \fF_0$, which is a sigma field satisfying $\fF_\delta\subset \fF_0$.
Additionally, the Lipschitz continuity and the upper bound of the domain implies $\BE|f(\bar{\vx}_{\tau_T})-f(\vx_\star)|\leq LD_\fX < \infty$.
Therefore, we can conclude the conditional expectation $\BE[f(\bar{\vx}_{\tau_T})-f(\vx_\star)\,|\,\fF_\delta]$ exists and is unique \citep[Chapter 4.1]{durrett2019probability}. 

We then combine Lemma \ref{lem:upper-bound-for-g}, Proposition \ref{prop}, and equation~(\ref{eq:tau}) to achieve our main result as follows. 
\begin{theorem}\label{thm}
Under Assumptions \ref{asm:domain}, \ref{asm:convex} ,\ref{asm:lipschitz}, and \ref{asm:oracle}, for all $\delta\in (0, 1)$, POEM (Algorithm \ref{alg:poem}) with $\vx_0\in\fX$ and $r_\epsilon\in(0,D_\fX]$ holds that 
\begin{align*}
 \BE[f(\bar{\vx}_{\tau_T})-f(\vx_\star)\,|\,\fF_\delta] 
\leq  \fO \bigg(\bigg(\frac{d}{T}+\frac{\sqrt{d}}{\sqrt{T}}\bigg)\theta_{T,\delta}LD_\fX\log_+\bigg(\frac{D_\fX}{r_\epsilon}\bigg)\bigg),
\end{align*}
with probability at least $1-\delta$, where $\theta_{T, \delta}\triangleq\log(60\log(6T/\delta))$.

By hiding the logarithmic factors by notation $\tilde\fO(\cdot)$, the SZO complexity to find an conditional expected $\epsilon$-suboptimal solution with probability $1-\delta$ is 
\begin{align*}
    \tilde\fO\bigg(\frac{dL^2 D_\fX^2}{\epsilon^2}\bigg).
\end{align*}
\end{theorem}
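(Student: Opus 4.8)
The plan is to specialize Proposition \ref{prop} to the output index $t=\tau_T$ and then convert the resulting pathwise bound into the claimed conditional-expectation guarantee. First I would observe that both $\vx_0$ and $\vx_\star$ lie in $\fX$, so Assumption \ref{asm:domain} gives $s_0=\norm{\vx_0-\vx_\star}\le D_\fX$ and, since every iterate $\vx_k\in\fX$, the distance to $\vx_0$ never exceeds the diameter, whence $\bar{r}_{\tau_T}\le D_\fX$. Consequently $\bar{r}_{\tau_T}+s_0\le 2D_\fX$ and $\log_+(\bar{r}_{\tau_T}/r_\epsilon)\le\log_+(D_\fX/r_\epsilon)$, which replaces the problem-dependent quantities in the numerator by the diameter and the initialization ratio $D_\fX/r_\epsilon$ — exactly the logarithmic dependence on $r_\epsilon$ appearing in the statement.

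Second, I would lower-bound the denominator. By the definition of $\tau_T$ in Algorithm \ref{alg:poem} as the argmax of $\sum_{k<t}\bar{r}_k/\bar{r}_t$, evaluating Proposition \ref{prop} at $t=\tau_T$ places the largest possible value of this ratio in the denominator, and equation (\ref{eq:tau}) (Lemma \ref{lem:dog-tau} applied to the nondecreasing sequence $\{\bar{r}_t\}$) bounds it below by $\Omega(T/\log_+(\bar{r}_{\tau_T}/r_\epsilon))$. Combining with the numerator estimates reduces the bound to $\fO(\theta_{T,\delta}D_\fX\log_+(D_\fX/r_\epsilon)/T)$ multiplied by $(\sqrt{G_{\tau_T-1}}+Ld+L\sqrt{d\tau_T})$.

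Third, I would dispatch the three numerator summands separately. Since $\tau_T\le T$ and $G_t$ is nondecreasing, the term $Ld$ contributes the $\fO(d/T)$ rate and $L\sqrt{d\tau_T}\le L\sqrt{dT}$ contributes the $\fO(\sqrt{d/T})$ rate after dividing by $T$; both are pathwise and require no expectation. The delicate summand is $\sqrt{G_{\tau_T-1}}\le\sqrt{G_{T-1}}$: the crude bound $\norm{\vg_k}\le Ld$ from Lemma \ref{lem:upper-bound-for-g} would only give $\sqrt{G_{T-1}}\le Ld\sqrt{T}$, i.e. the suboptimal $\fO(d/\sqrt{T})$. To recover the correct $\sqrt{d}$ scaling I would instead pass to expectations and use the second-moment bound $\BE[\norm{\vg_k}^2]\le cL^2d$, so that by conditional Jensen $\BE[\sqrt{G_{T-1}}\mid\fF_\delta]\le\sqrt{\BE[G_{T-1}\mid\fF_\delta]}\le\sqrt{cL^2dT/(1-\delta)}$, yielding $\fO(\sqrt{d/T})$. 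This is the step I expect to be the main obstacle, because the bound of Proposition \ref{prop} holds only on the high-probability event $\Omega_\delta$ while the index $\tau_T$ is itself random; the purpose of the trace sigma-field $\fF_\delta$ and of verifying $\BP(\Omega_\delta)\ge 1-\delta$ is precisely to let me apply conditional expectation monotonically to the pathwise inequality on $\Omega_\delta$ and to control $\BE[G_{T-1}\mid\fF_\delta]$ by the unconditional second moment up to the factor $1/\BP(\Omega_\delta)=\fO(1)$.

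Finally, assembling the three contributions yields the displayed bound $\fO((d/T+\sqrt{d/T})\,\theta_{T,\delta}LD_\fX\log_+(D_\fX/r_\epsilon))$ on $\Omega_\delta$, i.e. with probability at least $1-\delta$. For the complexity claim I would set this right-hand side equal to $\epsilon$: the dominant $\sqrt{d/T}$ term forces $T=\tilde\fO(dL^2D_\fX^2/\epsilon^2)$ (the $d/T$ term needs only the lower-order $\tilde\fO(dLD_\fX/\epsilon)$), and since each iteration issues exactly two zeroth-order queries the SZO complexity is $\tilde\fO(dL^2D_\fX^2/\epsilon^2)$, matching the lower bound of \citet{duchi2015optimal} up to logarithmic factors.
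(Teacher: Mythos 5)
Your proposal is correct and follows essentially the same route as the paper's proof: specialize Proposition \ref{prop} at $t=\tau_T$ on the event $\Omega_\delta$, bound $\bar r_{\tau_T}+s_0$ and $\log_+(\bar r_{\tau_T}/r_\epsilon)$ via the diameter, lower-bound the denominator with Lemma \ref{lem:dog-tau}, and recover the $\sqrt{d/T}$ rate for the $\sqrt{G_{T-1}}$ term by passing to the conditional expectation and applying Jensen together with the second-moment bound of Lemma \ref{lem:upper-bound-for-g}. The only cosmetic difference is that you carry an explicit $1/(1-\delta)=\fO(1)$ factor when controlling $\BE[G_{T-1}\,|\,\fF_\delta]$, whereas the paper absorbs this via its conditional-expectation lemmas; both are fine.
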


The SZO complexity provided by Theorem \ref{thm} matches the lower bound for stochastic zeroth-order optimization established by \citet{duchi2015optimal}.

\begin{remark}
As $\delta$ approaches 0, we have $\Omega_\delta\to\Omega_0$ and $\fF_\delta\to\fF_0$, which implies $\BE[f(\bar{\vx}_{\tau_T})-f(\vx_\star)\,|\,\fF_\delta]$ approaches  $\BE[f(\bar{\vx}_{\tau_T})-f(\vx_\star)\,|\,\fF_0]=f(\bar{\vx}_{\tau_T})-f(\vx_\star)$.  
\end{remark}

\section{Results for Unbounded Domains}\label{sec:unbounded}

In this section, we extend our method to solve the stochastic convex optimization problem such that the domain may be unbounded. 
We relax Assumption \ref{asm:domain} as follows.

\begin{assumption}\label{asm:unbounded_domain}
    The domain $\fX\subseteq\BR^d$ is closed and convex. Additionally, there exists some point $\vx_\star\in\fX$ such that $f(\vx_\star)=\min_{\vx\in\fX} f(\vx)$.
\end{assumption}

\begin{remark}    
The upper bound of $\BE[f(\bar{\vx}_{\tau_T})-f(\vx_\star)\,|\,\fF_\delta]$ in our analysis for the bounded domain (Theorem \ref{thm}) contains the term $\log_+({D_\fX}/{r_\epsilon})$ with $r_\epsilon\in(0,D_\fX]$, which may be invalid after we have relaxed
Assumption \ref{asm:domain} into Assumption~\ref{asm:unbounded_domain} 
since the diameter $D_\fX$ is possibly infinite.
\end{remark}

In the remainder of this section, we first modify POEM by introducing an underestimate of distance $\norm{\vx_0-\vx_\star}$ and an overestimate of the Lipschitz parameter $L$ to solve the problem without bounded domain assumption, 
then we show such estimates cannot be avoided for the unbounded setting.
The detailed proofs for results in this section are deferred to Appendix \ref{apx:unbounded}.

We introduce the quantity
\begin{align}\label{eq:Gprime}
 G_t^\prime=8^4 \theta_{T,\delta}\log_+^2(t+2) (G_{t-1}+16\theta_{T,\delta}d^2\bar{L}^2),
\end{align}
where 
$\theta_{T, \delta}\triangleq\log (60 \log(6 T/\delta))$ and $G_{t}\triangleq\sum_{k=0}^{t}\norm{\vg_k}^2$ follow notations in Section~\ref{sec:POEM} and 
$\bar{L}$ is an overestimate of the Lipschitz constant $L$ such that $\bar{L}\geq L$.

For the unbounded domain, we modify our POEM (Algorithm \ref{alg:poem}) by replacing the 
step size and the smoothing parameter with
\begin{align}\label{eq:mu-unbounded}
\eta_t=\frac{\bar{r}_t}{\sqrt{G_t^\prime}}
\qquad\text{and}\qquad
\mu_t = \frac{d\bar{r}_t}{(t+1)^2},
\end{align}
where $r_t\triangleq\norm{\vx_t-\vx_0}$
and $\bar{r}_{t}\triangleq\max _{k \leq t} r_{k} \vee r_{\epsilon}$ follow notations in Section \ref{sec:POEM} and
we let $G_{-1}=0$ for equation (\ref{eq:Gprime}) in the case of $t=0$.
We can observe that $T$ and $\delta$ only affect the logarithmic term in $G_t^\prime$.
Furthermore, the term $16\theta_{T,\delta}d^2\bar{L}^2$ in equation~(\ref{eq:Gprime}) is relatively smaller than  the term $G_{t-1}$ for large $t$.

We now provide the complexity analysis for the modified parameters setting (\ref{eq:mu-unbounded}). 
Note that the term $\bar r_t$ cannot be simply controlled by $D_\fX$ like the bounded domain setting.
Alternatively, we target to show that  $\bar{r}_t=\fO(s_0)$, which implies that $\vx_t$ remains close to $\vx_0$ and $\vx_\star$.
Based on the iteration
$\vx_{k+1}=\Pi_\fX (\vx_k-\eta_k \vg_k)$, 
we have
\begin{align*}
   \norm{\vx_{k+1}-\vx_\star}^2 \leq \norm{\vx_k-\vx_\star -\eta_k\vg_k}^2,
\end{align*}
Rearranging above inequality based on the definition of $s_k$ in Section \ref{sec:POEM}, we have
\begin{align*}
    s_{k+1}^2-s_k^2 \leq &\eta_k^2\norm{\vg_k}^2+2\eta_k\inner{\vDelta_k}{\vx_k-\vx_\star} -2\eta_k\inner{\nabla f_{\mu_k}(\vx_k)}{\vx_k-\vx_\star},
\end{align*}
where $\vDelta_k = \nabla f_{\mu_k}(\vx_k)-\vg_k$.
Summing above inequality over $k=0,1,\dots,t-1$, we achieve
\begin{align}\label{eq:unbounded-s}
\begin{split}    
    s_t^2-s_0^2 \leq & \sum_{k=0}^{t-1}\eta_k^2\norm{\vg_k}^2+2\sum_{k=0}^{t-1}\eta_k\inner{\vDelta_k}{\vx_k-\vx_\star}  +2\sum_{k=0}^{t-1}\eta_k\inner{\nabla f_{\mu_k}(\vx_k)}{\vx_\star-\vx_k}.
\end{split}   
\end{align}
Therefore, we can upper bound $\bar{r}_t$ by controlling each of the three terms on the right-hand side of inequality (\ref{eq:unbounded-s}). 
Note that the last term in equation (\ref{eq:unbounded-s}) does not appear in the analysis of first-order methods like DoG \cite{ivgi2023dog}.
Following the analysis in Appendix \ref{apx:unbounded-prop1}, we establish the follow upper bound for $\bar{r}_t$.

\begin{proposition}\label{prop:unbounded-1}
    For all $\delta\in(0, 1)$, POEM (Algorithm~\ref{alg:poem}) by replacing settings with $\eta_t={\bar{r}_t}/{\sqrt{G_t^\prime}}$, $\mu_t = {d\bar{r}_t}/{(t+1)^2}$, 
    and $r_\epsilon\in (0, 3s_0]$ holds that $\BP(\bar{r}_T>3s_0)\leq \delta$.
\end{proposition}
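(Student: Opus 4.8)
The plan is to control the growth of $\bar{r}_t$ by a stopping-time (bootstrapping) argument built on the decomposition (\ref{eq:unbounded-s}). Since $r_t=\norm{\vx_t-\vx_0}\le\norm{\vx_t-\vx_\star}+\norm{\vx_\star-\vx_0}=s_t+s_0$ and $r_\epsilon\le 3s_0$ by assumption, it suffices to prove that $s_t\le 2s_0$ for every $t\le T$ with probability at least $1-\delta$; this yields $r_t\le 3s_0$ and hence $\bar{r}_T\le 3s_0$. I would therefore define the stopping time $\tau\triangleq\min\{t\le T:s_t>2s_0\}$ and show that, on a single high-probability event, $\tau$ never fires, so that $\BP(\bar{r}_T>3s_0)\le\delta$.

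First I would isolate the only stochastic term in (\ref{eq:unbounded-s}), namely $2\sum_{k<t}\eta_k\inner{\vDelta_k}{\vx_k-\vx_\star}$. Because $\BE[\vg_k\mid\fF_{k-1}]=\nabla f_{\mu_k}(\vx_k)$ and, crucially, $\eta_k=\bar{r}_k/\sqrt{G_k'}$ is $\fF_{k-1}$-measurable (note $G_k'$ depends only on $G_{k-1}$, not on $\vg_k$), these summands form a bounded martingale difference sequence. I would apply the same time-uniform concentration used for Lemma \ref{lem:noise-from-g} to obtain, with probability at least $1-\delta$ and simultaneously for all $t\le T$, a bound of the form $\big|2\sum_{k<t}\eta_k\inner{\vDelta_k}{\vx_k-\vx_\star}\big|\lesssim \bar{s}_{t-1}\sqrt{\theta_{T,\delta}\sum_{k<t}\eta_k^2\norm{\vg_k}^2}+\theta_{T,\delta}\,\bar{s}_{t-1}\max_{k<t}\eta_k\,(Ld)$. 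Call this event $E$. On $E$ everything left to bound is deterministic.

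Next I would argue on $E$ by contradiction, assuming $\tau\le T$. By minimality, $s_k\le 2s_0$ for all $k<\tau$, hence $\bar{s}_{\tau-1}\le 2s_0$ and $\bar{r}_{\tau-1}\le 3s_0$. I then bound the three pieces of (\ref{eq:unbounded-s}) at $t=\tau$ using the exact design of $G_k'$. For the deterministic term $\sum_{k<\tau}\eta_k^2\norm{\vg_k}^2=\sum_{k<\tau}\bar{r}_k^2\norm{\vg_k}^2/G_k'$, the floor $G_k'\ge 8^4\theta_{T,\delta}\log_+^2(k+2)(G_{k-1}+16\theta_{T,\delta}d^2\bar{L}^2)$, the telescoping estimate $\norm{\vg_k}^2/(G_{k-1}+A)\le\log\frac{G_k+A}{G_{k-1}+A}$, and the weights $\log_+^{-2}(k+2)$ (which make the resulting series summable by an Abel-summation argument) give a bound $c_1\bar{r}_{\tau-1}^2$ with a small universal constant. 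For the last term, convexity of $f_{\mu_k}$ and Lemma \ref{lem:error} give $\inner{\nabla f_{\mu_k}(\vx_k)}{\vx_\star-\vx_k}\le f_{\mu_k}(\vx_\star)-f_{\mu_k}(\vx_k)\le 2L\mu_k$; with $\mu_k=d\bar{r}_k/(k+1)^2$ and $\sqrt{G_k'}\ge 256\,\theta_{T,\delta}\log_+(k+2)d\bar{L}$ this produces a convergent series $\sum_k (k+1)^{-2}$ times a small constant, i.e. $c_3\bar{r}_{\tau-1}^2$. For the noise term on $E$, the factor $8^4\theta_{T,\delta}$ in $G_k'$ forces $\sum_{k<\tau}\eta_k^2\norm{\vg_k}^2\le c\,\bar{r}_{\tau-1}^2/\theta_{T,\delta}$, so the $\theta_{T,\delta}$ under the square root cancels, and the $\theta_{T,\delta}$ multiplying the increment cancels against the $\theta_{T,\delta}$ inside $\max_{k<\tau}\eta_k$, leaving $c_2\,\bar{s}_{\tau-1}\bar{r}_{\tau-1}$. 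Choosing the constants so that $9c_1+6c_2+9c_3\le 3$ gives $s_\tau^2-s_0^2\le 3s_0^2$, i.e. $s_\tau\le 2s_0$, contradicting the definition of $\tau$. Hence $E\subseteq\{\bar{r}_T\le 3s_0\}$ and $\BP(\bar{r}_T>3s_0)\le\BP(E^c)\le\delta$.

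The main obstacle I expect is the noise term: unlike DoG, the per-step weight here is $\eta_k$ rather than $\bar{r}_k$, so I must verify that the martingale's quadratic variation $\sum_k\eta_k^2\inner{\vDelta_k}{\vx_k-\vx_\star}^2$ self-normalizes, i.e. is controlled by the very telescoping quantity $\sum_k\eta_k^2\norm{\vg_k}^2$ already bounded above. This requires passing between the empirical sum and the predictable variation, where I only have $\BE[\norm{\vDelta_k}^2\mid\fF_{k-1}]\le\BE[\norm{\vg_k}^2\mid\fF_{k-1}]\le cL^2d$ from Lemma \ref{lem:upper-bound-for-g}, via an auxiliary concentration; checking that the specific constants $8^4$ and the $\log_+^2(t+2)$ factor are exactly what is needed for (i) the $\theta_{T,\delta}$ cancellations, (ii) convergence of the $\log_+^{-2}$-weighted series, and (iii) the total to fall below $3s_0^2$ is the delicate bookkeeping. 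A secondary complication, absent from first-order analyses such as DoG, is the smoothing-bias term $2\sum_k\eta_k\inner{\nabla f_{\mu_k}(\vx_k)}{\vx_\star-\vx_k}$, which I handle through the faster $1/(t+1)^2$ decay of $\mu_t$ used in (\ref{eq:mu-unbounded}).
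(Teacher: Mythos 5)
Your proposal is correct and follows essentially the same route as the paper: the paper likewise runs a bootstrapping argument (phrased as induction on $t$ with the stopping time $\zeta=\min\{t:\bar r_t>3s_0\}$ and stopped step sizes $\tilde\eta_k=\eta_k\BI(k<\zeta)$ rather than your contradiction at $\tau$), bounding the same three terms of (\ref{eq:unbounded-s}) — the quadratic term deterministically via the $G_k'$ floor and the telescoping Lemma~\ref{lem:dog-unbounded-1}, the martingale term via the time-uniform concentration of Lemma~\ref{lem:dog-unbounded-2} with the $\theta_{T,\delta}$ cancellations you describe, and the smoothing bias via the $1/(k+1)^2$ decay of $\mu_k$ — arriving at $s_t^2-s_0^2\le 3s_0^2$. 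The obstacle you flag about a.s.\ boundedness of the martingale increments is exactly what the paper's stopped step sizes resolve, and your stopping-time framing handles it the same way.
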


We consider the probability space $(\tilde\Omega_0, \tilde\fF_0, \tilde\BP)$, where $\tilde\Omega_0$ is the sample space of the Algorithm~\ref{alg:poem} with the modified settings used in Proposition \ref{prop:unbounded-1} for given $\vx_0$ and~$r_\epsilon$, $\Tilde{\fF}_0$ is the sigma field generated by the random sequences $\{\vv_t\}_{t=0}^T$ and $\{\vxi_t\}_{t=0}^T$, 
and $\tilde\BP:\tilde\fF_0\to[0,1]$ assigns probabilities to events in $\tilde\fF_0$.

Following the settings of Proposition \ref{prop:unbounded-1},
we define the set
\begin{align*}
     \Tilde{\Omega}_\delta \triangleq \bigg\{\omega\in \tilde\Omega_0: \forall t\leq T,  \bigg|\sum_{k=0}^{t-1}\Tilde{\eta}_k\inner{\vDelta_k}{\vx_k-\vx_\star}\bigg|\leq s_0^2\bigg\},
\end{align*}
where $\Tilde{\eta}_t=\eta_t \BI(t< \zeta)$ and $\zeta \triangleq \min\{t\in \BN \mid \bar{r}_t>3s_0\}$.

The derivation of Proposition \ref{prop:unbounded-1} (Appendix \ref{apx:unbounded-prop1}) indicates that if $r_\epsilon\leq 3s_0$, 
then $\bar{r}_T\leq 3s_0$ for all $\omega\in \Tilde{\Omega}_\delta$. 
Moreover, it holds $\tilde\BP(\Tilde{\Omega}_\delta)\geq 1-\delta$.

Similar to Proposition \ref{prop}, we can also establish the upper bound of the value gap for unbounded setting as follows (see Appendix \ref{apx:unbounded-prop2} for the proof).
\begin{proposition}\label{prop:unbounded-2}
Under Assumptions \ref{asm:convex},   
 \ref{asm:lipschitz} and \ref{asm:oracle}, for all $\delta\in (0, 1)$,
POEM (Algorithm~\ref{alg:poem}) with the modified \mbox{settings} used in Proposition \ref{prop:unbounded-1} holds that  
\begin{align}\label{eq:unbounded-gap}
 f(\bar{\vx}_t)-f(\vx_\star) 
\leq  \frac{20\theta_{t,\delta}(\bar{r}_t+s_0)(\sqrt{G_{t-1}^\prime}+ Ld)}{\sum_{k=0}^{t-1}\bar{r}_k/\bar{r}_t},
\end{align}
with probability $1-\delta$, where $\theta_{t,\delta}=\log(60\log(t/\delta))$.
\end{proposition}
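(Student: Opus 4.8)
The plan is to mirror the derivation of Proposition~\ref{prop}, keeping the domain-independent parts intact and re-estimating only the terms whose previous bounds relied on the bounded-domain settings. First I would reproduce the decomposition leading to (\ref{eq:gapsmooth})--(\ref{eq:threecomponentssmooth}): Jensen's inequality, the convexity of $f$ and $f_{\mu_k}$, and the surrogate error bound of Lemma~\ref{lem:error} all hold verbatim, since none of them invokes Assumption~\ref{asm:domain}. This yields
\begin{align*}
f(\bar{\vx}_t)-f(\vx_\star) \le \frac{1}{\sum_{k=0}^{t-1}\bar{r}_k}\Big(\sum_{k=0}^{t-1}\bar{r}_k\inner{\vg_k}{\vx_k-\vx_\star} + \sum_{k=0}^{t-1}\bar{r}_k\inner{\vDelta_k}{\vx_k-\vx_\star} + \sum_{k=0}^{t-1}2L\bar{r}_k\mu_k\Big),
\end{align*}
with $\vDelta_k=\nabla f_{\mu_k}(\vx_k)-\vg_k$, so the task reduces to bounding the weighted regret, the noise from $\vg_k$, and the noise from $\mu_k$ under the modified settings (\ref{eq:mu-unbounded}).

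For the weighted regret I would apply Lemma~\ref{lem:weighted-regret}, which is stated for the generic iteration (\ref{eq:iteration}) and is therefore insensitive to the choice of $\eta_t$ and $\mu_t$, giving $\bar{r}_t(2\bar{s}_t+\bar{r}_t)\sqrt{G_{t-1}}$. The triangle inequality supplies the \emph{deterministic} bound $\bar{s}_t\le\bar{r}_t+s_0$, so this term is at most $3\bar{r}_t(\bar{r}_t+s_0)\sqrt{G_{t-1}}$, and $\sqrt{G_{t-1}}\le\sqrt{G_{t-1}'}$ follows from the definition (\ref{eq:Gprime}) together with $\norm{\vg_{t-1}}\le Ld\le\bar{L}d$ from Lemma~\ref{lem:upper-bound-for-g}. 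The noise from $\mu_k$ is where the unbounded setting genuinely differs: the faster-decaying $\mu_k=d\bar{r}_k/(k+1)^2$ turns the sum into $\sum_{k=0}^{t-1}2L\bar{r}_k\mu_k=2Ld\sum_{k=0}^{t-1}\bar{r}_k^2/(k+1)^2\le 2Ld\,\bar{r}_{t-1}^2\sum_{k\ge0}(k+1)^{-2}\le 4Ld\,\bar{r}_t^2\le 4Ld\,\bar{r}_t(\bar{r}_t+s_0)$, a convergent series. This is precisely what removes the $L\sqrt{dt}$ contribution that appeared in Lemma~\ref{lem:noise-from-mu} and Proposition~\ref{prop}.

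The main obstacle is the noise from $\vg_k$. I would first argue that Lemma~\ref{lem:noise-from-g} transfers unchanged to the modified settings: its quantity $b_t$ is produced by a time-uniform martingale concentration whose only quantitative inputs are the bounds $\norm{\vg_t}\le Ld$ and $\BE[\norm{\vg_t}^2]\le cL^2d$ of Lemma~\ref{lem:upper-bound-for-g}, and these are independent of $\mu_t$. Moreover, because the right-hand side of (\ref{eq:unbounded-gap}) still carries $\bar{r}_t$, there is no circularity and hence no need for the truncation used to define $\Tilde{\Omega}_\delta$; the self-normalized bound $b_t$, expressed in the realized $\bar{r}_{t-1},\bar{s}_{t-1},G_{t-1}$, applies directly and holds uniformly over $t\le T$ with probability at least $1-\delta$. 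On that event I would simplify $b_t=8\bar{r}_{t-1}\bar{s}_{t-1}\sqrt{\theta_{t,\delta}G_{t-1}+4L^2d^2\theta_{t,\delta}^2}$ using $\bar{r}_{t-1}\le\bar{r}_t$, the deterministic $\bar{s}_{t-1}\le\bar{r}_t+s_0$, and $\sqrt{a+b}\le\sqrt{a}+\sqrt{b}$, reaching $8\bar{r}_t(\bar{r}_t+s_0)(\sqrt{\theta_{t,\delta}}\sqrt{G_{t-1}}+2\theta_{t,\delta}Ld)$.

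The crux is then to verify that $G_{t-1}'$ in (\ref{eq:Gprime}) was engineered exactly so that this absorbs cleanly: with $\sqrt{G_{t-1}}\le\sqrt{G_{t-1}'}$ and $\sqrt{\theta_{t,\delta}}\le\theta_{t,\delta}$, the noise term is at most $8\theta_{t,\delta}\bar{r}_t(\bar{r}_t+s_0)(\sqrt{G_{t-1}'}+2Ld)$. Finally I would add the three estimates, divide by $\sum_{k=0}^{t-1}\bar{r}_k=\bar{r}_t\sum_{k=0}^{t-1}\bar{r}_k/\bar{r}_t$ so that each numerator loses one factor of $\bar{r}_t$, and collect the resulting constants $3+8\theta_{t,\delta}\le 11\theta_{t,\delta}$ and $16\theta_{t,\delta}+4\le 20\theta_{t,\delta}$ (using $\theta_{t,\delta}\ge1$) into the claimed coefficient $20\theta_{t,\delta}$, which delivers (\ref{eq:unbounded-gap}). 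I expect the only delicate point to be confirming that the self-normalized concentration behind Lemma~\ref{lem:noise-from-g} remains valid without an a priori diameter bound, which is exactly why the bound is stated in terms of the realized $G_{t-1}$ and $\bar{s}_{t-1}$ rather than $D_\fX$.
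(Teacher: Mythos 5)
Your overall route is the same as the paper's: reuse the decomposition (\ref{eq:gapsmooth})--(\ref{eq:threecomponentssmooth}), keep Lemma \ref{lem:noise-from-g} intact (correct -- its proof only uses $\norm{\vg_k}\leq Ld$ from Lemma \ref{lem:upper-bound-for-g}, independent of $\eta_t$ and $\mu_t$, and indeed no truncation is needed here), and re-bound the noise from $\mu_k$ via the convergent series $\sum_{k\geq 0}(k+1)^{-2}\leq 2$, exactly as in Appendix \ref{apx:unbounded-prop2}. Your constant bookkeeping also matches.

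However, there is one genuine flaw: your claim that Lemma \ref{lem:weighted-regret} is ``insensitive to the choice of $\eta_t$'' and therefore yields the bound $\bar{r}_t(2\bar{s}_t+\bar{r}_t)\sqrt{G_{t-1}}$ under the modified step size. The weighted regret lemma is not step-size-agnostic: its proof (from \citet[Lemma 3.4]{ivgi2023dog}) starts from $\inner{\vg_k}{\vx_k-\vx_\star}\leq \frac{1}{2\eta_k}(s_k^2-s_{k+1}^2)+\frac{\eta_k}{2}\norm{\vg_k}^2$ and crucially exploits the structure $\eta_k=\bar{r}_k/\sqrt{G_k}$ so that both terms telescope against $\sqrt{G_k}$. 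With the smaller step size $\eta_k=\bar{r}_k/\sqrt{G_k^\prime}$, the term $\frac{1}{\eta_k}(s_k^2-s_{k+1}^2)$ carries the factor $\sqrt{G_k^\prime}$, and after Abel summation the regret bound comes out with $\sqrt{G_{t-1}^\prime}$, not $\sqrt{G_{t-1}}$. Indeed, the un-primed bound you assert is false for generic small step sizes: with $\eta_k$ tiny the iterates barely move, each $\inner{\vg_k}{\vx_k-\vx_\star}$ can stay of order $Ls_0$, so the left side grows like $t$ while $\bar{r}_t(2\bar{s}_t+\bar{r}_t)\sqrt{G_{t-1}}$ grows only like $\sqrt{t}$. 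This is precisely why the paper states (in Appendix \ref{apx:unbounded-prop2}) that since $G_t^\prime\geq G_t$, Lemma \ref{lem:weighted-regret} holds \emph{with $G_t$ replaced by $G_t^\prime$}. Fortuitously, your final chain is unharmed because you immediately relax $\sqrt{G_{t-1}}\leq\sqrt{G_{t-1}^\prime}$, so the end result (\ref{eq:unbounded-gap}) is correct; the repair is simply to invoke the regret argument with $G^\prime$ in place of $G$ from the outset rather than citing Lemma \ref{lem:weighted-regret} verbatim.
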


Following the setting of Proposition \ref{prop:unbounded-2}, we define the set
\begin{align*}
\hat{\Omega}_\delta \triangleq \bigg\{\omega\in {\tilde\Omega}_0: \forall t\leq T,  \bigg|\sum_{k=0}^{t-1}\bar{r}_k\inner{\vDelta_k}{\vx_k-\vx_\star}\bigg|<b_t\bigg\},
\end{align*}
where $b_t=8\bar{r}_{t-1}\bar{s}_{t-1}\sqrt{\theta_{t, \delta} G_{t-1}+4L^2d^2\theta_{t,\delta}^{2}}$.
The derivation of Proposition \ref{prop:unbounded-2} indicates that inequality (\ref{eq:unbounded-gap}) holds for all $\omega\in \hat\Omega_\delta$. 
Moreover, it holds that $\tilde{\BP}(\hat{\Omega}_\delta)\geq 1-\delta$.

We define $\Tilde{F}_\delta \triangleq \{A:A\subset \Tilde{\Omega}_\delta\cap \hat\Omega_\delta\}\cap \Tilde{\fF}_0$, which is a sigma filed such that $\Tilde{F}_\delta\subset \Tilde{\fF}_0$.
Since our probability space is defined on the algorithm with finite $T$, we can conclude the conditional expectation $\BE[f(\bar{\vx}_{\tau_T}) - f(\vx_\star) \,|\, \Tilde{\fF}_\delta]$ exists and is unique even if the domain $D_\fX$ is unbounded. (Please see Appendix \ref{apx:thm2} for the detailed derivation.)

We then combine Propositions \ref{prop:unbounded-1} and \ref{prop:unbounded-2} to achieve the convergence result without bounded domain assumption.
\begin{theorem}\label{thm2}
Under Assumptions \ref{asm:convex}, \ref{asm:lipschitz},  \ref{asm:oracle}, and \ref{asm:unbounded_domain}, 
for all $\delta\in (0, {1}/{2})$, POEM (Algorithm \ref{alg:poem}) with the modified settings used in Proposition \ref{prop:unbounded-1} holds that 
\begin{align*}
\BE[f(\bar{\vx}_{\tau_T})-f(\vx_\star)\,|\,\Tilde{\fF}_\delta] 
\leq \fO \bigg(\bigg(\frac{d(L+\bar{L})}{T}+\frac{\sqrt{d}L}{\sqrt{T}}\bigg)\alpha_{T,\delta} s_0\log_+\bigg(\frac{s_0}{r_\epsilon}\bigg)\bigg)
\end{align*}
with probability at least $1-2\delta$, where $s_0= \norm{\vx_0-\vx_\star}$ and  $\alpha_{T,\delta}\triangleq \log_+(T+1)\log(60\log(T/\delta))$.
\end{theorem}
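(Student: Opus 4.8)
The plan is to combine the high-probability control of $\bar{r}_T$ from Proposition~\ref{prop:unbounded-1} with the per-iteration gap bound of Proposition~\ref{prop:unbounded-2}, and then convert the resulting random bound into the conditional expectation by restricting to the good event. Set $B\triangleq\tilde\Omega_\delta\cap\hat\Omega_\delta$. By Proposition~\ref{prop:unbounded-1} (which gives $\tilde\BP(\tilde\Omega_\delta)\geq 1-\delta$) and the probability statement following Proposition~\ref{prop:unbounded-2} (which gives $\tilde\BP(\hat\Omega_\delta)\geq 1-\delta$), a union bound yields $\tilde\BP(B)\geq 1-2\delta$; this is the source of the $1-2\delta$ confidence and explains the restriction to $\delta\in(0,1/2)$. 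Since $\tilde\fF_\delta$ is the trace $\sigma$-field on $B$ and $|f(\bar{\vx}_{\tau_T})-f(\vx_\star)|$ is bounded on $B$ (the iterates stay within $\fO(s_0)$ of $\vx_\star$ there), the conditional expectation $\BE[\,\cdot\,|\,\tilde\fF_\delta]$ is well defined, and it suffices to bound the gap pointwise on $B$ and then integrate.

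Second, I would evaluate the bound of Proposition~\ref{prop:unbounded-2} at $t=\tau_T$ on the event $B$. On $\tilde\Omega_\delta$ we have $\bar{r}_{\tau_T}\leq\bar{r}_T\leq 3s_0$, so the numerator factor obeys $\bar{r}_{\tau_T}+s_0\leq 4s_0$. For the denominator I would apply Lemma~\ref{lem:dog-tau} and equation~(\ref{eq:tau}) with $a_t=\bar{r}_t$ together with $\bar{r}_{\tau_T}\leq 3s_0$ to get $\sum_{k=0}^{\tau_T-1}\bar{r}_k/\bar{r}_{\tau_T}\geq\Omega\!\big(T/\log_+(3s_0/r_\epsilon)\big)$. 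The only genuinely random factor left is $\sqrt{G_{\tau_T-1}'}+Ld$; since $t\mapsto G_t'$ is nondecreasing I would replace $\sqrt{G_{\tau_T-1}'}$ by $\sqrt{G_{T-1}'}$, and then, using the definition~(\ref{eq:Gprime}) of $G_t'$ and $\sqrt{a+b}\le\sqrt{a}+\sqrt{b}$, split it as $\sqrt{G_{T-1}'}\lesssim\sqrt{\theta_{T,\delta}}\log_+(T+1)\big(\sqrt{G_{T-2}}+\sqrt{\theta_{T,\delta}}\,d\bar{L}\big)$.

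Third, I would take the conditional expectation. For a nonnegative random variable $X$, $\BE[X\,|\,\tilde\fF_\delta]\le \BE[X]/\tilde\BP(B)\le \BE[X]/(1-2\delta)$, so conditioning inflates expectations only by the $\fO(1)$ factor $(1-2\delta)^{-1}$. The term $\sqrt{G_{T-2}}$ is then controlled unconditionally: Jensen's inequality and Lemma~\ref{lem:upper-bound-for-g} give $\BE[\sqrt{G_{T-2}}]\le\sqrt{\BE[G_{T-2}]}\le\sqrt{cL^2dT}=L\sqrt{cdT}$, which produces the $\sqrt{d}L/\sqrt{T}$ term after dividing by the $\Omega(T)$ denominator. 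The inner $\sqrt{\theta_{T,\delta}}\,d\bar{L}$ piece and the standalone $Ld$ (with $\bar{L}\ge L$) both scale like $1/T$ and combine into the $d(L+\bar{L})/T$ term. Collecting the leading factor $\theta_{T,\delta}$ from Proposition~\ref{prop:unbounded-2}, the $\log_+(T+1)$ and $\sqrt{\theta_{T,\delta}}$ from $\sqrt{G_{T-1}'}$, and absorbing $\log_+(3s_0/r_\epsilon)=\fO(\log_+(s_0/r_\epsilon))$, all logarithmic and doubly-logarithmic factors fold into $\alpha_{T,\delta}=\log_+(T+1)\log(60\log(T/\delta))$ and $\log_+(s_0/r_\epsilon)$, yielding the claimed rate.

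The main obstacle is the conditional-expectation step rather than the algebra. Because $\tau_T$, $G_{T-1}$, and the event $B$ are all measurable with respect to the same randomness, one cannot naively pull $\BE[\sqrt{G_{T-1}'}]$ out of the conditioning. The two devices that make this clean are (i) the monotonicity of $G_t'$, which removes the dependence on the random index $\tau_T$ by upper bounding at $t=T$, and (ii) the fact that the bounds $\norm{\vg_k}\le Ld$ and $\BE[\norm{\vg_k}^2]\le cL^2d$ of Lemma~\ref{lem:upper-bound-for-g} hold on all of $\tilde\Omega_0$ and are independent of the smoothing parameter, so the second-moment estimate for $G_{T-2}$ is unconditional and only the harmless factor $(1-2\delta)^{-1}$ is incurred by restricting to $B$. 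Verifying that $\BE[f(\bar{\vx}_{\tau_T})-f(\vx_\star)\,|\,\tilde\fF_\delta]$ is finite and well defined on the unbounded domain (now that $D_\fX$ may be infinite) also relies on the a priori containment $\bar{r}_T\le 3s_0$ from Proposition~\ref{prop:unbounded-1}, which replaces the role played by $D_\fX$ in the bounded-domain Theorem~\ref{thm}.
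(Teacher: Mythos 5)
Your proposal follows essentially the same route as the paper: a union bound over $\tilde\Omega_\delta$ and $\hat\Omega_\delta$ gives the $1-2\delta$ confidence, Proposition~\ref{prop:unbounded-2} is evaluated at $t=\tau_T$ on that event, the denominator is handled by \eqref{eq:tau}, $\bar r_{\tau_T}\le\bar r_T\le 3s_0$ comes from Proposition~\ref{prop:unbounded-1}, monotonicity of $G_t'$ removes the random index, and $\BE[\sqrt{G_{T-1}'}]$ is controlled by Jensen together with Lemma~\ref{lem:upper-bound-for-g}. The one place you genuinely diverge is the conditional-expectation step: the paper first applies Lemma~\ref{lem:ce-inequality} to pass the pointwise bound through $\BE[\,\cdot\,|\,\tilde\fF_\delta]$ and then uses the tower property (Lemma~\ref{lem:ce-exchange}) plus conditional Jensen (Lemma~\ref{lem:ce-jensen}) to replace $\BE[\sqrt{G_{T-1}'}\,|\,\tilde\fF_\delta]$ by a deterministic quantity, whereas you use the elementary bound $\BE[X\,|\,\tilde\fF_\delta]\le\BE[X]/\tilde\BP(B)$ for nonnegative $X$ followed by unconditional Jensen. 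Your device is simpler and arguably easier to justify, but it is an inequality for conditioning on the single event $B$, not on the trace $\sigma$-field $\tilde\fF_\delta$ as the paper defines it (on which the conditional expectation of an $\tilde\fF_0$-measurable variable need not be bounded by $\BE[X]/\tilde\BP(B)$), and it introduces an extra factor $(1-2\delta)^{-1}$ that is not $\fO(1)$ uniformly over $\delta\in(0,1/2)$; the theorem as stated hides no such factor. Finally, note that the paper establishes integrability of $f(\bar\vx_{\tau_T})-f(\vx_\star)$ on all of $\tilde\Omega_0$ by a deterministic induction (Appendix~\ref{apx:thm2}), not via the containment $\bar r_T\le 3s_0$, which only holds on the good event. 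These are localized, fixable points; the argument is otherwise the paper's.
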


By taking $\bar{L}=L$, the upper bound of the expected function value gap shown in Theorem \ref{thm2} becomes
\begin{align*}
    \fO \bigg(\bigg(\frac{d}{T}+\frac{\sqrt{d}}{\sqrt{T}}\bigg)\alpha_{T,\delta} Ls_0\log_+\bigg(\frac{s_0}{r_\epsilon}\bigg)\bigg).
\end{align*}
This indicates the SZO complexity of $\tilde\fO(dL^2s_0^2/\epsilon^2)$
for finding an $\epsilon$-suboptimal solution $\bar{\vx}_{\tau_T}$, which is sharper than the SZO complexity of $\fO(d^2L^2s_0^2/\epsilon^2)$ provided by \citet{nesterov2017random}.

However, the settings of Theorem \ref{thm2} (also Proposition \ref{prop:unbounded-1}) requires $r_\epsilon\in (0, 3s_0]$ and the value of $s_0=\norm{\vx_0-\vx_\star}$ is unknown.
Furthermore, the first term in the upper bound provided by Theorem~\ref{thm2} contain the linear dependence on~$\bar L$.
Ideally, we desire to design an algorithm that nearly match the SZO complexity of $\tilde\fO(dL^2s_0^2/\epsilon^2)$ and only has the additional logarithmic dependence on the problem parameter estimates like $r_\epsilon$ and $\bar L$. 
Unfortunately, we can show that it is impossible to achieve such ideal parameter-free zeroth-order algorithm for the stochastic convex optimization problem without bounded domain assumptions.

We assume the stochastic zeroth-order algorithm $\fA$ accept the valid estimates $\bar{L}$, $\underline{L}$, $\bar{s}$ and $\underline{s}$ such that $\underline{L}\leq L\leq \bar{L}$ and $\underline{s}\leq s_0\leq \bar{s}$.
We then establish the following lower bound on the function value gap for our stochastic convex optimization problem.

\begin{theorem}\label{thm3}
For all polylogarithmic function $\theta: \BR^{4} \rightarrow \BR$, $d\in\BN$, and stochastic zeroth-order algorithm $\fA$ with SZO satisfying Assumption \ref{asm:oracle} and valid estimates  $\underline{L}$, $\bar{L}$, $\underline{s}$, and~$\bar{s}$, 
there exists an $L$-Lipschitz and convex function $f$ defined on $\BR^d$ such that algorithm $\fA$ with SZO call of $T\geq 2$ and some initial point $\vx_0\in\BR^d$ returns a point $\hat{\vx}$ satisfying
\begin{align*}
    f(\hat{\vx})-f_\star>\theta\bigg(\frac{\bar{L}}{\underline{L}}, \frac{\bar{s}}{\underline{s}}, T, d\bigg)  \frac{\sqrt{d}\,L s_{0}}{\sqrt{T}}
\end{align*}
with probability at least $1/{\rm e}$.  
\end{theorem}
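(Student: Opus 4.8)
The plan is to prove the impossibility by a distributional (minimax) argument: rather than fighting a fixed deterministic algorithm directly, I would fix the polylogarithmic function $\theta$, the dimension $d$, and the estimates $\underline{L},\bar{L},\underline{s},\bar{s}$, then exhibit a prior distribution over admissible instances $f$ (each convex, $L$-Lipschitz, with $\underline{L}\le L\le\bar{L}$ and $\underline{s}\le s_0\le\bar{s}$) together with a stochastic oracle satisfying Assumption \ref{asm:oracle}, such that every randomized algorithm $\fA$ making at most $T$ two-point queries incurs error exceeding $\theta(\bar{L}/\underline{L},\bar{s}/\underline{s},T,d)\,\sqrt{d}Ls_0/\sqrt{T}$ with probability at least $1/{\rm e}$ over the joint randomness. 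Averaging over the prior then yields a single deterministic instance $f$ that fools $\fA$, and by Yao's principle it suffices to treat deterministic $\fA$ against the random instance.

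The construction must force an error that is \emph{polynomially} large in one of the arguments of $\theta$, since $\theta$ is only polylogarithmic. I would engineer this through the interplay of the unbounded domain with the uncertainty in $L$ and $s_0$. Concretely, take $\vx_0$ as the origin and hide the optimizer at $\vx_\star=r\,\vv$ for a random sign pattern or random direction $\vv$ (to generate the $\sqrt{d}$ factor, as in the stochastic lower bounds of \citet{duchi2015optimal,shamir2017optimal}) and a hidden scale $r$ drawn from a geometric ladder inside $[\underline{s},\bar{s}]$. Each instance is a globally convex, $L$-Lipschitz profile (for example piecewise-linear or a shifted norm) whose only informative feature -- the kink encoding $\vv$ and $r$ -- sits in a small region near $\vx_\star$, while the stochastic oracle injects noise of constant scale so that a single two-point evaluation reveals only a low signal-to-noise measurement of the hidden parameters. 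Crucially, the feature resolution is tied to the hidden $r$ and to the true $L=\underline{L}$, so that an algorithm committing to a smoothing radius $\mu$ and step schedule calibrated to its estimates $\bar{L}$ or $\bar{s}$ probes the function at the wrong scale.

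The failure argument then proceeds by information theory. I would bound the total information the $T$ adaptive two-point queries provide about $(\vv,r)$ via a Le Cam / Fano / Assouad-type comparison of the query-response distributions across neighboring instances, showing that the transcript cannot localize the optimizer beyond a region whose size shrinks only polynomially in $T$. Since the number of well-separated candidates is large -- exponential in $d$ through $\vv$ and geometric in $\bar{s}/\underline{s}$ through $r$ -- with probability at least $1/{\rm e}$ the returned $\hat{\vx}$ lies far from $\vx_\star$, so that $L$-Lipschitzness and the hidden scale give $f(\hat{\vx})-f_\star=\Omega\!\big(Ls_0\cdot\Phi\big)$ with $\Phi$ polynomial in $\sqrt{T/d}$ and in the relevant ratio. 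This forced error dominates $\theta(\cdot)\,\sqrt{d}Ls_0/\sqrt{T}$ because $\Phi$ grows polynomially while $\theta$ is only polylogarithmic, which is the desired contradiction.

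The main obstacle is constructing the hard family so that three requirements hold simultaneously: (i) each instance is genuinely convex and $L$-Lipschitz -- convexity is the binding constraint, since a naive flat region with a hidden pit is nonconvex, forcing the optimizer to be encoded through the slope or direction of a globally convex profile rather than a localized well; (ii) the two-point responses remain nearly uninformative even though the algorithm adaptively chooses both its query points and its own smoothing parameter $\mu$, which requires controlling the information leakage from finite differences at \emph{every} resolution, not merely at one; and (iii) the indistinguishability bound is quantitatively strong enough to deliver the constant probability $1/{\rm e}$ while the value separation stays a polynomial -- not merely polylogarithmic -- multiple of the base rate $\sqrt{d}Ls_0/\sqrt{T}$. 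Balancing the noise level, the feature width, and the scale ladder so that all three hold against an adversarially tuned $\mu$ is where the technical heart of the proof lies.
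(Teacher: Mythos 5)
Your proposal is a different route from the paper's, and it has a genuine gap at its core. The paper does not use a many-instance Bayesian/Fano argument at all: it constructs exactly two functions, $f_1(\vx)=L\norm{\vx}_1$ and $f_2(\vx)=L\norm{\vx-\vu}_1$ with $\vu=(1-1/T)\mathbf{1}_d$, and equips $f_2$ with a stochastic oracle whose components equal $f_1$'s oracle with probability $1-1/T$ and carry all the distinguishing information in a probability-$1/T$ event. With probability $(1-1/T)^T\geq 1/{\rm e}$ the two transcripts coincide over $T$ queries, so the algorithm must fail on one instance. The decisive quantitative point is that the stochastic components of $f_2$ have Lipschitz constant $L_2=2TL$ while its initial distance is $s_{0,2}=\sqrt{d}/T$, so both target accuracies $\theta\sqrt{d}L_is_{0,i}/\sqrt{T}$ are of order $\theta\, dL/\sqrt{T}$, whereas the two minimizers force a function-value separation of order $dL$ --- a $\sqrt{T}$ factor larger, which no polylogarithmic $\theta$ can absorb.

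The gap in your plan is that none of the tools you invoke produces such a polynomial separation. Le Cam/Fano/Assouad over a packing of directions $\vv$ with constant-scale oracle noise yields a forced error of order the base rate $\sqrt{d}Ls_0/\sqrt{T}$ itself (this is exactly how the \citet{duchi2015optimal} lower bound in Table~\ref{table:szo} is obtained), not a super-polylogarithmic multiple of it; reproducing the base rate cannot contradict the hypothesis $f(\hat\vx)-f_\star\leq\theta(\cdot)\sqrt{d}Ls_0/\sqrt{T}$ for an arbitrary polylog $\theta$. Your candidate source of extra hardness --- a geometric ladder of scales $r\in[\underline{s},\bar{s}]$ --- is also insufficient: the paper's own Theorems~\ref{thm} and~\ref{thm2} show that uncertainty of $s_0$ within a known range costs only a $\log_+(\,\cdot\,/r_\epsilon)$ factor, so a scale ladder can at best force a polylogarithmic loss, which is again of the form the theorem must defeat. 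Your claim that the forced error is $\Omega(Ls_0\Phi)$ with $\Phi$ ``polynomial in $\sqrt{T/d}$'' is asserted without any mechanism that could deliver it; the mechanism the paper uses --- hiding the difference between two instances in a rare oracle event so that one instance's \emph{component} Lipschitz constant is inflated to $\Theta(TL)$ while its $s_0$ shrinks to $\Theta(s_0/T)$, keeping the product fixed but blowing up the true separation --- is absent from your proposal, and without it (or an equivalent device) the argument cannot close.
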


\begin{remark}
In a recent work, \citet{khaled2024tuning} showed the impossibility of the ideal parameter-free algorithm for stochastic first-order optimization by construction the hard instance of one dimensional function.
In contrast, the lower bound for zeroth-order optimization shown in Theorem \ref{thm3} needs to additionally consider the dependence on dimension, which is different from the analysis in the first-order case. 
\end{remark}

\section{Numerical Experiments}

This section conducts numerical experiments to evaluate the empirical performance of proposed POEM (Algorithm \ref{alg:poem}).
We consider the stochastic optimization problem of the form
\begin{align*}
\min_{\vx\in\fX} f(\vx) \triangleq \BE_{(\va,b)}[F(\vx; \va,b)],
\end{align*}
where $F(\vx;\va,b)=\max\{0,1-b\va^{\top} \vx\}$,  
$(\va,b)\!\in\!{\BR^{d}\!\times\!\{\pm1\}}$ 
is uniformly sampled from the binary classification dataset $\{(\va_i,b_i)\}_{i=1}^n$, and $\fX=\{\vx\in\BR^d:\norm{\vx}\leq R\}$ with the
radius $R=1$.
We perform our experiments on datasets \cite{chang2011libsvm} 
``mushrooms'' ($d=112$, $n=8124$), ``a9a" ($d=123$, $n=32,561$) and , ``w8a'' ($d=300$, \mbox{$n=49,749$}).
We compare the proposed POEM with existing stochastic zeroth-order algorithms Two-Point Gradient Estimates (TPGE) method \cite{duchi2015optimal} and Two-Point Bandit Convex Optimization (TPBCO) method \cite{shamir2017optimal}.

We present the comparison on the SZO complexity against the function values in Figure \ref{fig:szo}. 
For POEM, we simply set the inital movment as $r_\epsilon = 10^{-2}$. 
We run the baseline methods TPGE and TPBCO with the parameter settings in their theoretical analysis (refer to TPGE-T and TPBCO-T) and well-tuned step sizes (refer to TPGE-E and \mbox{TPBCO-E}), respectively.
We observe the POEM converges faster than TPGE-T and TPBCO-T.
In addition, results of POEM and baseline methods with well-tuned stepsizes are comparable. 

We also study the impact of parameter settings in practice.
Specifically, we present the objective function value at the last iteration $(T=10^6)$ for all algorithms with different parameter settings in Figure~\ref{fig:tune},
where we tune the initial movement $r_\epsilon$ in our POEM and the term $1/L$ in expressions of stepsizes in baseline methods from $\{10^{-7},10^{-6},\dots,10^2\}$.
It is clear that our POEM is more robust to the parameter settings than other methods.
More importantly, the initial movement in POEM almost does not affect the $f(\vx_T)$ when we take $r_\epsilon\leq R=1$,
which support our theory that $r_\epsilon$ only affect the logarithmic term in the complexity bound if it is no larger than the diameter of the domain (Theorem \ref{thm}).
We also show the change of the step sizes with different initial movement $r_\epsilon$ for POEM, which shows the step sizes with different settings tend to the same with iterations.

\begin{figure*}[!ht]
\centering
\begin{tabular}{ccc}
     \includegraphics[scale=0.23]{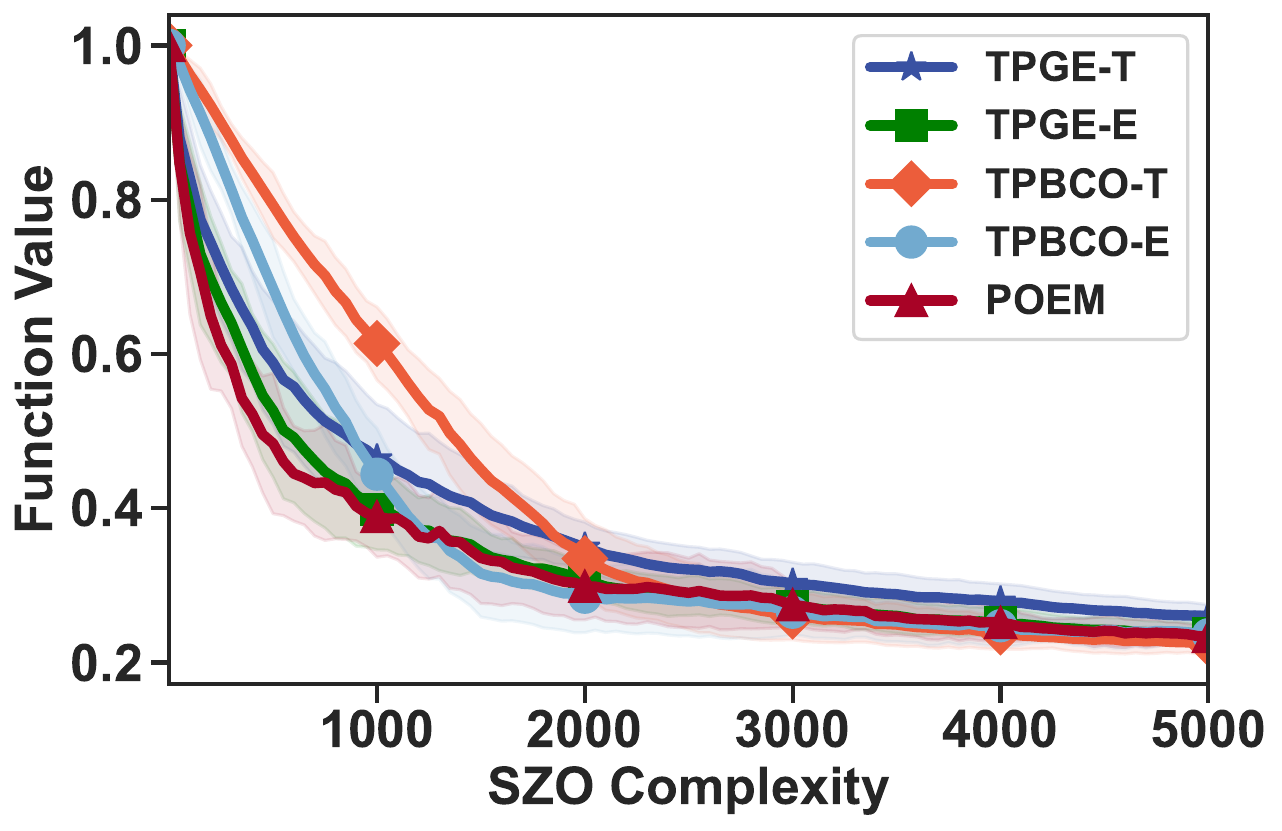} 
     &  \includegraphics[scale=0.23]{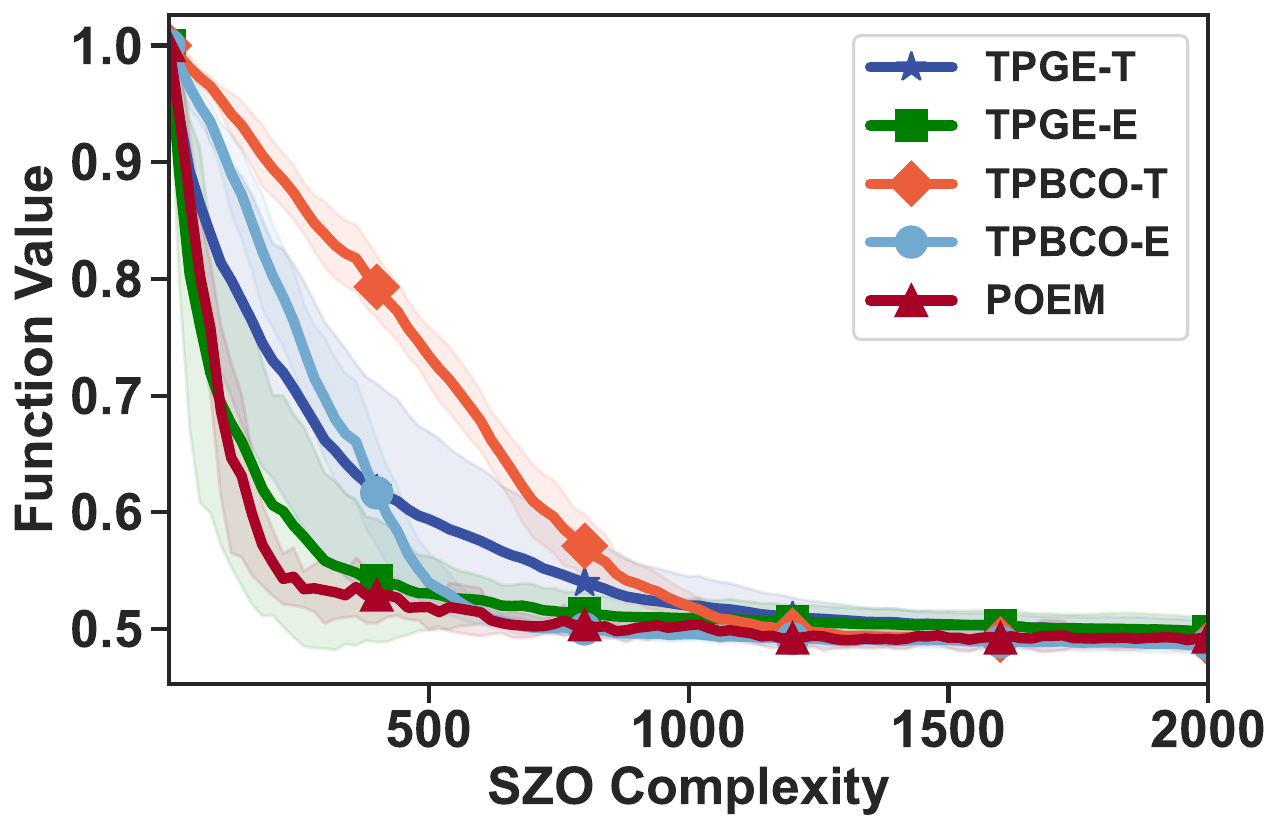}
     &  \includegraphics[scale=0.23]{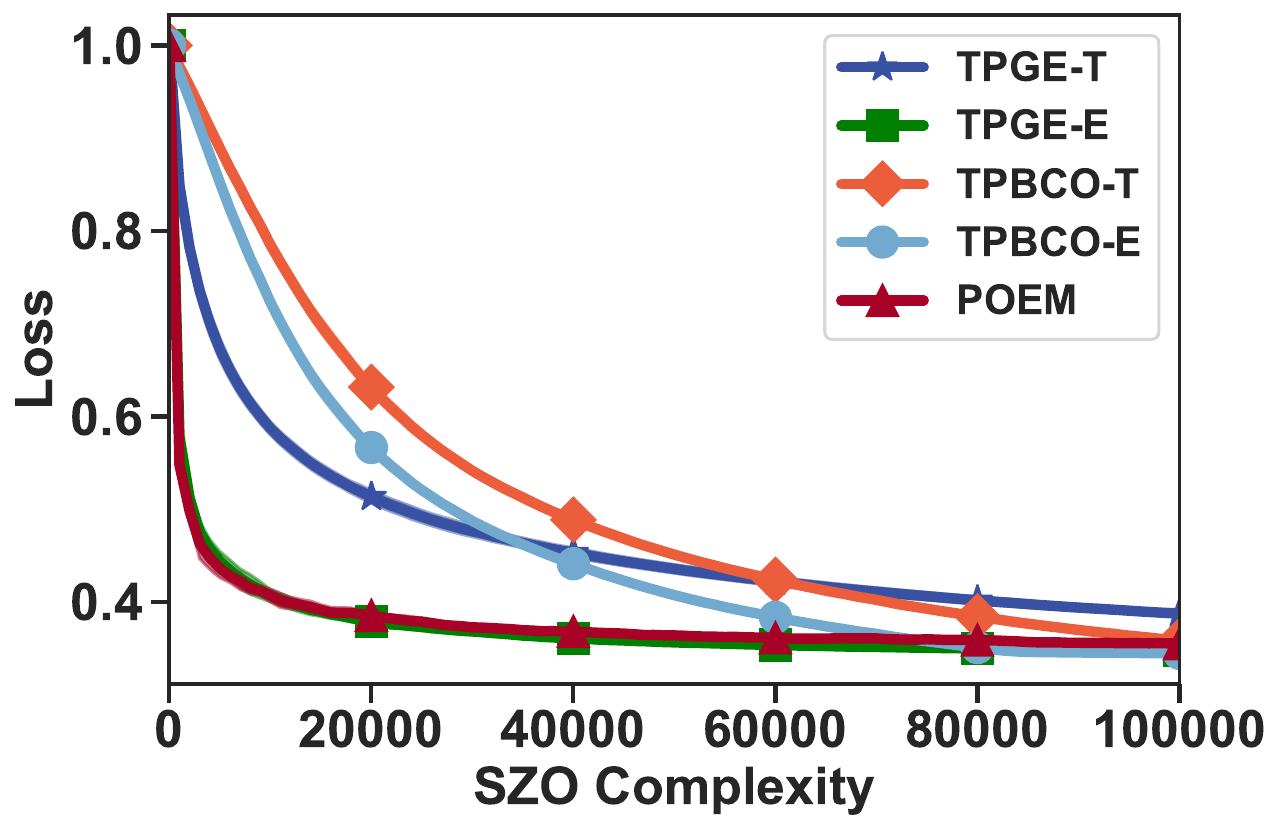} \\[-0.1cm]
     (a) \footnotesize mushrooms & 
     (b) \footnotesize a9a & 
     (c) \footnotesize w8a
\end{tabular}  
\caption{The comparison on the SZO complexity against the function value during the iterations.}\label{fig:szo} \vskip0.15cm
\end{figure*}

\begin{figure*}[!ht]
\centering
\begin{tabular}{ccc}
     \includegraphics[scale=0.23]{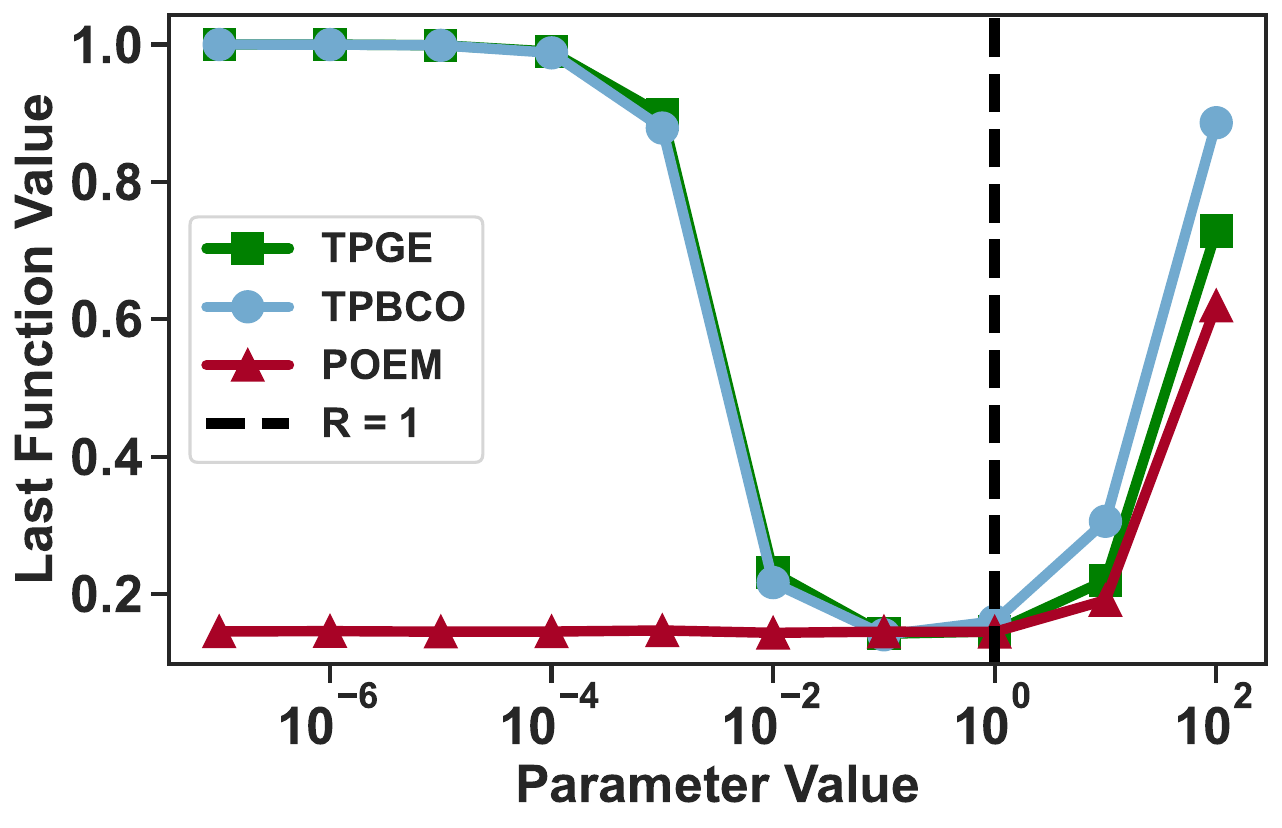} 
     &  \includegraphics[scale=0.23]{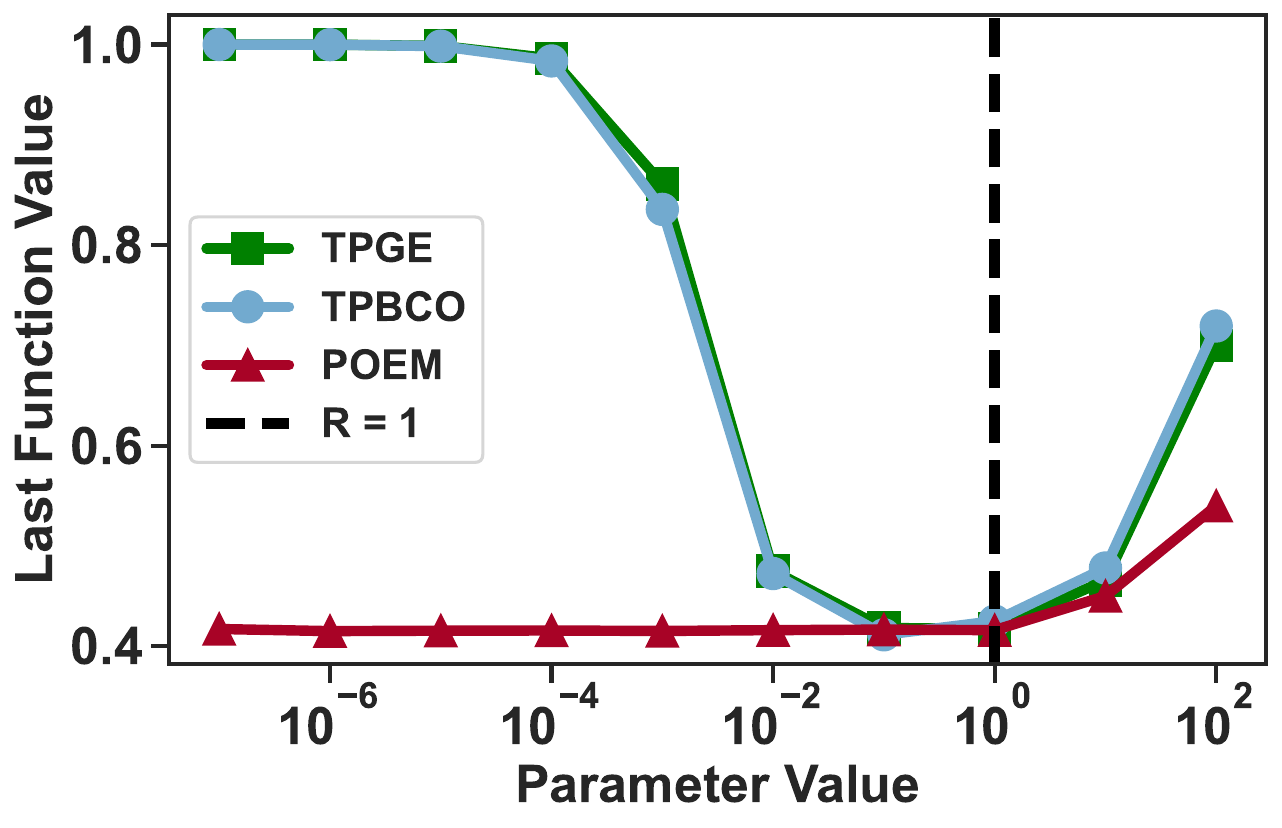}
     &  \includegraphics[scale=0.23]{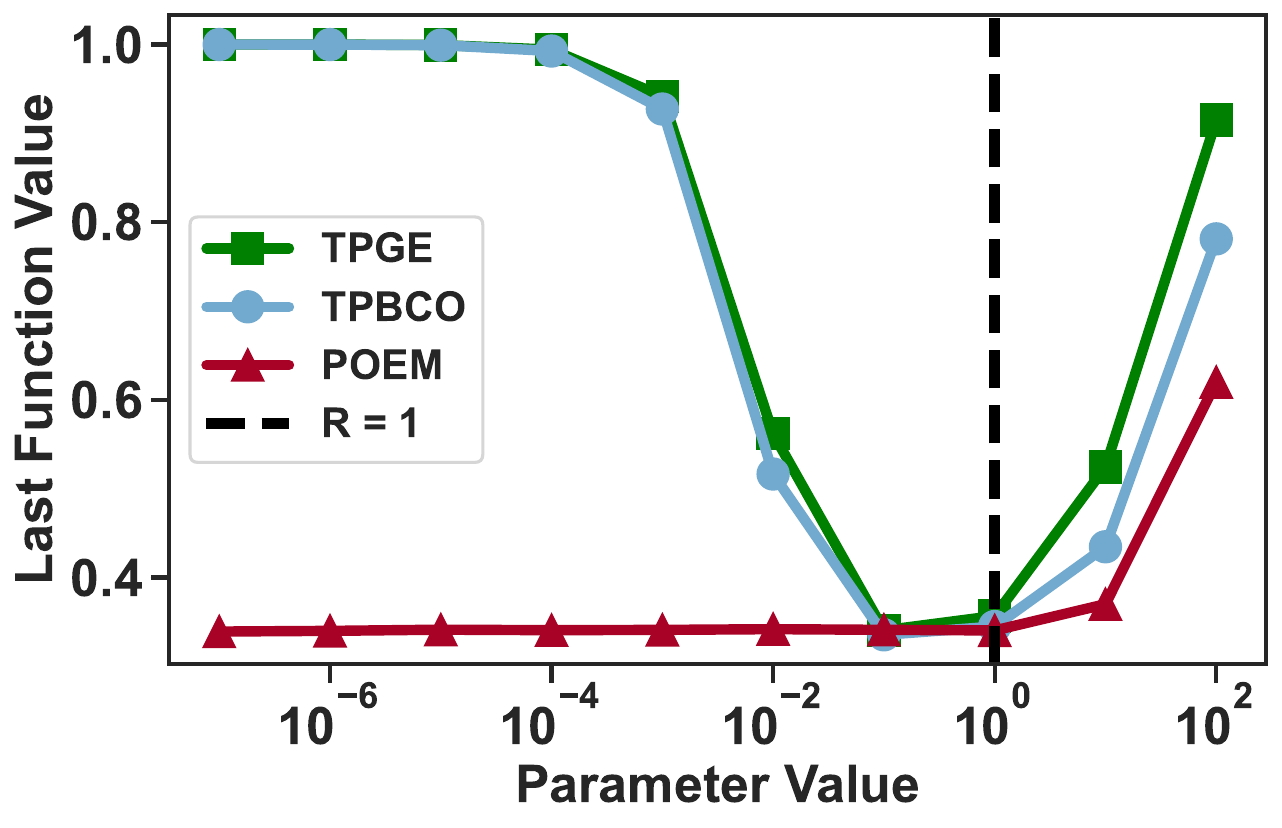} \\[-0.1cm]
     (a) \footnotesize mushrooms & 
     (b) \footnotesize a9a & 
     (c) \footnotesize w8a
\end{tabular} 
\caption{The comparison on parameter settings ($r_\epsilon$ for POEM and $1/L$ for other methods) against $f(\vx_T)$.}\label{fig:tune}  \vskip0.15cm
\end{figure*} 

\begin{figure*}[!ht]
\centering
\begin{tabular}{ccc}
     \includegraphics[scale=0.23]{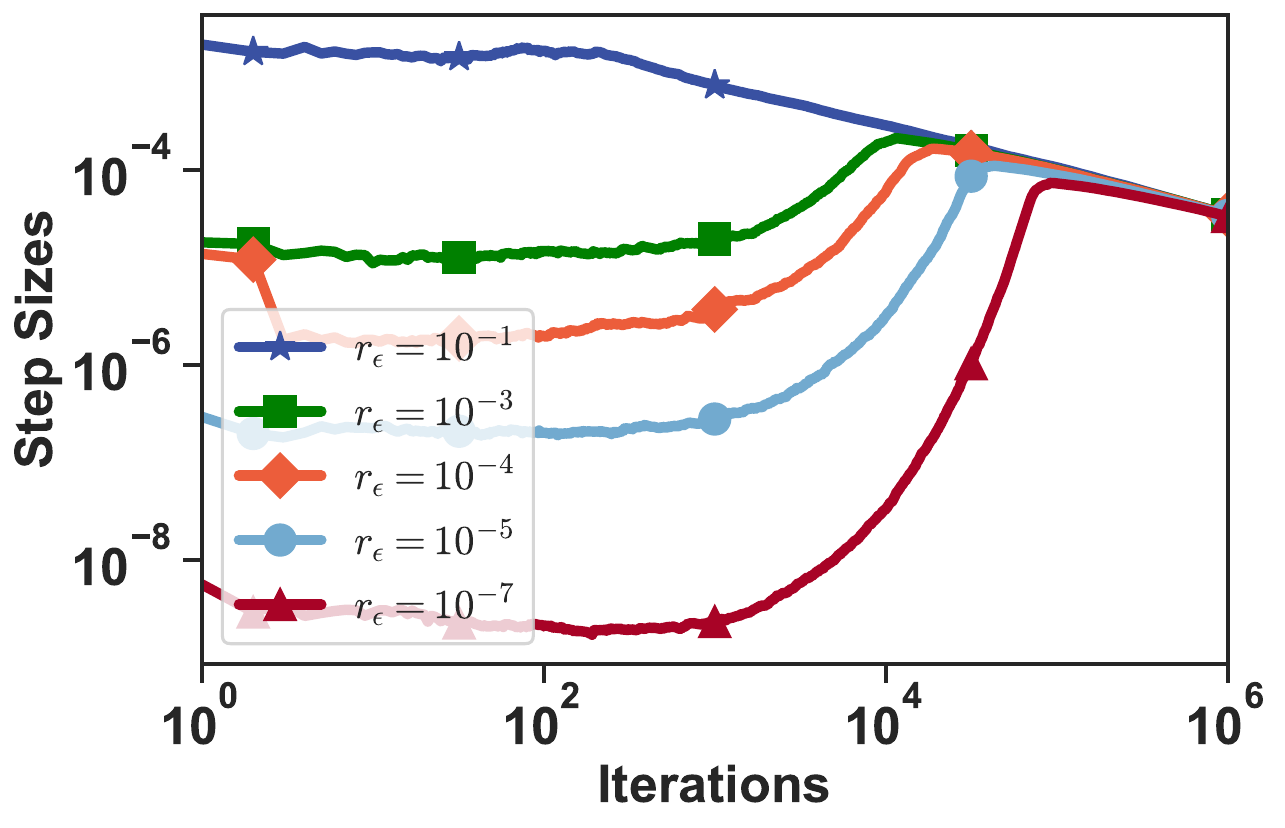} 
     &  \includegraphics[scale=0.23]{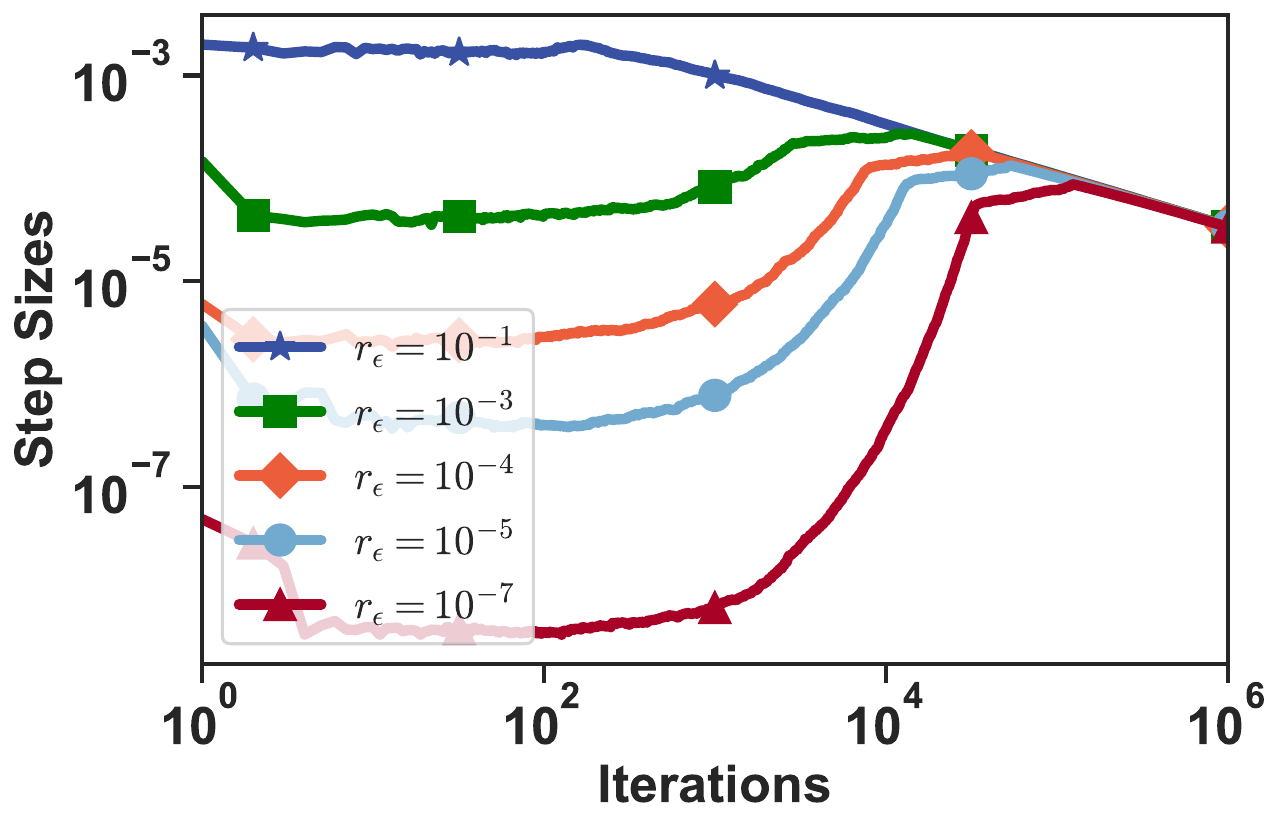}
     &  \includegraphics[scale=0.23]{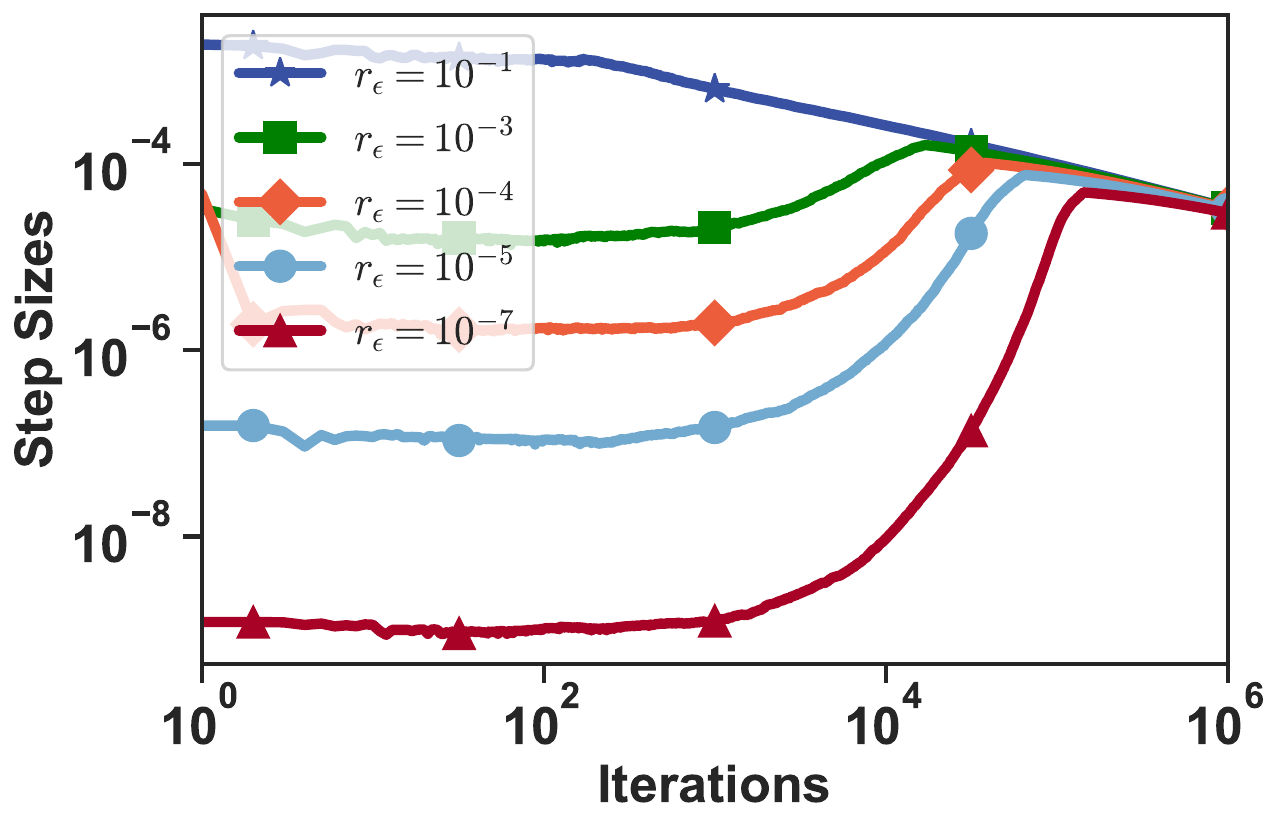} \\[-0.1cm]
     (a) \footnotesize mushrooms & 
     (b) \footnotesize a9a & 
     (c) \footnotesize w8a
\end{tabular}  
\caption{The change of the step size with difference $r_\epsilon$ for POEM.}\label{fig:stepsize}  \vskip0.15cm
\end{figure*} 

\section{Conclusion}
In this paper, we propose a novel zeroth-order optimization algorithm POEM for stochastic convex optimization, which schedules both the step size and the smoothing parameter during iterations.
We show that POEM achieves the near-optimal stochastic zeroth-order oracle complexity and its initialization only affects the convergence rates by a logarithmic factor for problems with bounded domain. 
We then extend POEM to solve the problem without the bounded domain assumption and provide the lower bound to show the limitation of stochastic zeroth-order algorithms in the unbounded setting.
Our numerical experiments further validate the efficiency of POEM in practice.

In future work, we are interested in extending the ideas of POEM to a wider range of scenarios such as the zeroth-order optimization for minimax and bilevel problems.
It is also possible to design the parameter-free zeroth-order methods for the finite-sum optimization problem.

\bibliographystyle{plainnat}
\bibliography{reference}

\newpage
\appendix

\section{Some Basic Results}

We first present some basic lemmas.

\begin{lemma}\label{lem:fact-convex}
Suppose the function $\phi:\fD\to \BR$ is differentiable, then it is convex if and only if for any $\vx,\vy\in \fD$, we have
\begin{align*}
   \phi(\vy)\geq \phi(\vx)+\inner{\nabla \phi(\vx)}{\vy-\vx},
\end{align*}
where $\nabla \phi(\vx)$ denotes the gradient of $\phi(\cdot)$ at $\vx$.
\end{lemma}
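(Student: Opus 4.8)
The plan is to prove the two implications of the stated equivalence separately, relying only on the definition of convexity of $\phi$ on its (convex) domain $\fD$ together with the differentiability hypothesis. Throughout I use implicitly that $\fD$ is convex, so that the intermediate points appearing below actually belong to $\fD$; this is needed for the statement even to be meaningful.

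For the forward direction, I would assume $\phi$ is convex and fix arbitrary $\vx,\vy\in\fD$. For any $t\in(0,1)$ the point $\vx+t(\vy-\vx)$ lies in $\fD$, and convexity gives $\phi(\vx+t(\vy-\vx))\le (1-t)\phi(\vx)+t\phi(\vy)$. Subtracting $\phi(\vx)$, dividing by $t$, and rearranging yields
\[
\frac{\phi(\vx+t(\vy-\vx))-\phi(\vx)}{t}\le \phi(\vy)-\phi(\vx).
\]
Letting $t\to 0^{+}$, the left-hand side converges to the directional derivative $\inner{\nabla\phi(\vx)}{\vy-\vx}$ by differentiability of $\phi$ at $\vx$, which produces exactly the claimed inequality $\phi(\vy)\ge \phi(\vx)+\inner{\nabla\phi(\vx)}{\vy-\vx}$.

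For the reverse direction, I would assume the first-order inequality holds for every pair of points and deduce convexity. Fix $\vx,\vy\in\fD$ and $\lambda\in[0,1]$, and set $\vz\triangleq\lambda\vx+(1-\lambda)\vy\in\fD$. Applying the hypothesis with base point $\vz$ to each of $\vx$ and $\vy$ gives $\phi(\vx)\ge \phi(\vz)+\inner{\nabla\phi(\vz)}{\vx-\vz}$ and $\phi(\vy)\ge \phi(\vz)+\inner{\nabla\phi(\vz)}{\vy-\vz}$. I would then take the convex combination of these two inequalities, weighting the first by $\lambda$ and the second by $1-\lambda$; the resulting gradient term is $\inner{\nabla\phi(\vz)}{\lambda(\vx-\vz)+(1-\lambda)(\vy-\vz)}$, whose argument equals $\vz-\vz=\vzero$. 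Hence the inner-product term vanishes and we are left with $\lambda\phi(\vx)+(1-\lambda)\phi(\vy)\ge \phi(\vz)=\phi(\lambda\vx+(1-\lambda)\vy)$, which is precisely the definition of convexity.

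There is no substantial obstacle here: the only delicate point is justifying the passage to the limit of the difference quotient in the forward direction, which is immediate from the definition of the gradient, and the only genuinely clever step is the cancellation of the gradient terms at the combination point $\vz$ in the reverse direction, made possible by the fact that $\lambda(\vx-\vz)+(1-\lambda)(\vy-\vz)$ vanishes. I would expect the write-up to be short, with the cancellation being the one line worth stating explicitly.
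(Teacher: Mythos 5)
Your proof is correct: both directions are the standard textbook argument (difference quotient plus differentiability for the forward implication, and the convex-combination cancellation at $\vz=\lambda\vx+(1-\lambda)\vy$ for the converse), and you rightly flag that convexity of $\fD$ is implicitly required for the statement to make sense. The paper states this lemma as a basic fact without supplying any proof, so there is nothing to diverge from; your write-up is exactly the canonical derivation the paper implicitly relies on.
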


\begin{lemma}\label{lem:fact-sphere}
Suppose $\vv\sim \BU(\BS^{d-1})$, then the covariance matrix of $\vv$ is given by
\begin{align*}
    \mathbb{E}_{\vv\sim \BU(\BS^{d-1})}[\vv\vv^\top]=\frac{1}{d}\cdot\mathbf{I}_d,
\end{align*}
where $\mathbf{I}_d$ denotes the identity matrix with order $d$.
\end{lemma}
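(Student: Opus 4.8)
The plan is to exploit the invariance of the uniform distribution $\BU(\BS^{d-1})$ under orthogonal transformations. Write $\mathbf{M}\triangleq \BE_{\vv\sim\BU(\BS^{d-1})}[\vv\vv^\top]$ for the matrix in question, whose $(i,j)$ entry is $\BE[v_iv_j]$, where $v_i$ denotes the $i$-th coordinate of $\vv$. The key fact I would invoke is that if $\vv\sim\BU(\BS^{d-1})$, then $\mathbf{Q}\vv\sim\BU(\BS^{d-1})$ for every orthogonal matrix $\mathbf{Q}$, since the uniform measure is the unique rotation-invariant probability measure on the sphere.

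First I would handle the off-diagonal entries. For $i\neq j$, consider the orthogonal reflection $\mathbf{Q}$ sending $v_i\mapsto -v_i$ and fixing every other coordinate. Invariance of the distribution gives $\BE[v_iv_j]=\BE[(-v_i)v_j]=-\BE[v_iv_j]$, hence $\BE[v_iv_j]=0$, so $\mathbf{M}$ is diagonal.

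Next I would show the diagonal entries are all equal. For any indices $i,j$, the permutation matrix swapping coordinates $i$ and $j$ is orthogonal, so invariance yields $\BE[v_i^2]=\BE[v_j^2]$; call this common value $c$. To pin down $c$, I would use the constraint $\norm{\vv}^2=\sum_{i=1}^d v_i^2=1$ that holds pointwise on the sphere. Taking expectations and using linearity, $1=\BE[\norm{\vv}^2]=\sum_{i=1}^d\BE[v_i^2]=dc$, so $c=1/d$. Combining the two steps gives $\mathbf{M}=\tfrac{1}{d}\mathbf{I}_d$, as claimed. Equivalently, one may note directly that $\mathbf{M}$ commutes with every orthogonal $\mathbf{Q}$ (since $\mathbf{Q}\mathbf{M}\mathbf{Q}^\top=\BE[(\mathbf{Q}\vv)(\mathbf{Q}\vv)^\top]=\mathbf{M}$), force $\mathbf{M}=c\mathbf{I}_d$, and recover $c=1/d$ from $\Tr(\mathbf{M})=\BE[\norm{\vv}^2]=1$.

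The argument is essentially self-contained, and the only point requiring care—hence the main obstacle, though a mild one—is justifying the orthogonal invariance of $\BU(\BS^{d-1})$ that underlies both symmetry reductions; everything else is linearity of expectation together with the defining identity $\norm{\vv}=1$ on the sphere.
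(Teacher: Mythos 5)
Your proof is correct, and both of your routes (the coordinatewise reflection/permutation argument and the observation that $\mathbf{M}$ commutes with every orthogonal matrix) are standard and complete; the trace identity $\Tr(\mathbf{M})=\BE[\norm{\vv}^2]=1$ correctly pins down the constant. Note that the paper itself states this lemma as a basic fact without proof, so there is no proof in the paper to compare against --- your argument supplies exactly the kind of justification the authors took for granted, with the only genuinely load-bearing ingredient being the orthogonal invariance of $\BU(\BS^{d-1})$, which you correctly identify and which holds because the normalized surface measure is the unique rotation-invariant probability measure on the sphere.
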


\begin{lemma}[{\citet[Lemma C.3]{ivgi2023dog}}]\label{lem:dog-unbounded-1}
 Let $a_{-1},a_0,\dots, a_{t}$   be a nondecreasing sequence of nonnegative numbers, then the following inequality holds
 \begin{align*}
     \sum_{k=0}^t \frac{a_k-a_{k-1}}{a_k\log_+^2(a_k/a_{-1})}\leq 1.
 \end{align*}
\end{lemma}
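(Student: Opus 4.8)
The plan is to bound the sum by a telescoping integral and then evaluate that integral in closed form. Since the summands involve the decreasing profile $s\mapsto 1/(s\log_+^2(s/a_{-1}))$, I expect the discrete sum to be dominated by $\int_{a_{-1}}^{a_t} \frac{ds}{s\log_+^2(s/a_{-1})}$, which integrates to something strictly less than $1$.

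First I would dispose of the degenerate cases. The expression is only well-defined when $a_{-1}>0$, which I assume (consistent with the role of $r_\epsilon>0$ in the algorithm); any index $k$ with $a_k=a_{k-1}$ contributes a zero term and may be ignored. Because the sequence is nondecreasing with $a_{-1}>0$, every $a_k\geq a_{-1}>0$, so $a_k/a_{-1}\geq 1$ and hence $\log_+(a_k/a_{-1})=\log(a_k/a_{-1})+1\geq 1$; in particular all denominators are strictly positive and $\log_+$ is increasing on $[a_{-1},\infty)$.

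Next I would set $h(s)\triangleq 1/(s\log_+^2(s/a_{-1}))$ and observe that $h$ is nonincreasing on $[a_{-1},\infty)$, since both $s$ and $\log_+(s/a_{-1})$ increase with $s$. Fixing $k$ and using $a_{k-1}\leq s\leq a_k$ on the interval of integration gives $h(a_k)\leq h(s)$, so
\begin{align*}
\frac{a_k-a_{k-1}}{a_k\log_+^2(a_k/a_{-1})} = (a_k-a_{k-1})\,h(a_k) \leq \int_{a_{k-1}}^{a_k} h(s)\,ds.
\end{align*}
Summing over $k=0,\dots,t$, the intervals on the right telescope into the single integral $\int_{a_{-1}}^{a_t} h(s)\,ds$.

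Finally I would evaluate this integral by the substitution $u=\log_+(s/a_{-1})=\log(s/a_{-1})+1$, for which $du=ds/s$, with $u=1$ at $s=a_{-1}$ and $u=\log_+(a_t/a_{-1})$ at $s=a_t$. This yields
\begin{align*}
\int_{a_{-1}}^{a_t} \frac{ds}{s\log_+^2(s/a_{-1})} = \int_{1}^{\log_+(a_t/a_{-1})} \frac{du}{u^2} = 1-\frac{1}{\log_+(a_t/a_{-1})} \leq 1,
\end{align*}
which completes the argument. The only point requiring care is the well-definedness and monotonicity bookkeeping, namely that $a_{-1}>0$ and that $\log_+$ stays $\geq 1$, so that $h$ is genuinely decreasing and the per-term integral comparison runs in the correct direction; the substitution step is then entirely routine.
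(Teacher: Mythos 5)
Your proof is correct. The paper itself does not prove this lemma---it imports it verbatim from \citet[Lemma C.3]{ivgi2023dog}---and your argument is exactly the standard one underlying that citation: monotonicity of $h(s)=1/\bigl(s\log_+^2(s/a_{-1})\bigr)$ on $[a_{-1},\infty)$ gives the per-term comparison $(a_k-a_{k-1})h(a_k)\le\int_{a_{k-1}}^{a_k}h(s)\,\mathrm{d}s$, the intervals telescope, and the substitution $u=\log_+(s/a_{-1})$ evaluates the integral to $1-1/\log_+(a_t/a_{-1})\le 1$. Your bookkeeping point about requiring $a_{-1}>0$ is also the right reading of the statement (for $a_{-1}=0$ the ratio $a_k/a_{-1}$ is not even defined), and it is what the paper's only application supplies, since there $a_k=G_k+d^2\bar{L}^2$ so that $a_{-1}=d^2\bar{L}^2>0$.
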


\begin{lemma}[{\citet[Lemma 9]{shamir2017optimal}}]\label{lem:shamir-concentration}
Suppose $\vv\sim \BU(\BS^{d-1})$. Then, for any function $h:\BR^d\to\BR$ which is $L$-Lipschitz with respect to the $2$-norm, it holds that 
\begin{align*}
    \BP(|h(\vv)-\BE_{\vv\sim \BU(\BS^{d-1})}[h(\vv)]|\geq t)\leq 2\exp\bigg(-\frac{cdt^2}{L^2}\,\bigg),
\end{align*}
where $c$ is a numerical constant.
\end{lemma}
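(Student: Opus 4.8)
The statement is the classical Lévy concentration of measure for Lipschitz functions on the sphere, so my plan is to reduce it to a standard geometric concentration estimate and then recover the stated constants. First I would make two routine reductions. By homogeneity it suffices to treat $L=1$: replacing $h$ by $h/L$ turns an $L$-Lipschitz function into a $1$-Lipschitz one and rescales the threshold from $t$ to $t/L$, so a bound $\BP(|h-\BE h|\geq t)\leq 2\exp(-cdt^2)$ for the normalized function becomes exactly the claimed $2\exp(-cdt^2/L^2)$. Next I would pass from the Euclidean to the geodesic metric on $\BS^{d-1}$: the chordal distance never exceeds the geodesic (arc) distance, so a function that is $1$-Lipschitz with respect to $\norm{\cdot}$ is also $1$-Lipschitz with respect to the geodesic metric, and hence its intrinsic gradient satisfies $\norm{\nabla_{\BS^{d-1}} h}\leq 1$ almost everywhere.

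The core of the argument is the concentration inequality for the uniform measure $\sigma$ on $\BS^{d-1}$. I would obtain it through the log-Sobolev inequality together with Herbst's argument. Since $\BS^{d-1}$ has constant Ricci curvature $\mathrm{Ric}=d-2$, the Bakry--\'Emery criterion shows that $\sigma$ satisfies a logarithmic Sobolev inequality with constant $1/(d-2)$, i.e. $\mathrm{Ent}_\sigma(g^2)\leq \tfrac{2}{d-2}\int\norm{\nabla_{\BS^{d-1}} g}^2\,d\sigma$. Herbst's argument then converts this into a sub-Gaussian bound on the moment generating function of a $1$-Lipschitz function: $\BE[\exp(\lambda(h-\BE h))]\leq \exp(\lambda^2/(2(d-2)))$ for all $\lambda\in\BR$. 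A Chernoff bound, optimizing over $\lambda>0$, yields $\BP(h-\BE h\geq t)\leq \exp(-(d-2)t^2/2)$, and applying the same estimate to $-h$ controls the lower tail; a union bound produces the factor $2$.

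Finally I would restore the constants. Undoing the scaling gives the exponent $-(d-2)t^2/(2L^2)$, and since $d-2\geq d/2$ for $d\geq 4$ (the finitely many small dimensions being absorbed into the universal constant, or handled by direct estimates), this is at most $-cdt^2/L^2$ for a fixed $c>0$, which is the claimed form. I expect the main obstacle to be establishing the concentration estimate on the sphere itself---the log-Sobolev inequality with constant of order $1/d$---since everything else (scaling, the chord-versus-arc comparison, symmetrization) is routine. As an alternative to the log-Sobolev route one could invoke Lévy's isoperimetric inequality, which states that spherical caps minimize the measure of $\varepsilon$-neighborhoods; this directly gives concentration around the median $\mathrm{med}(h)$ with the same Gaussian rate, after which one passes from the median to the mean by integrating the tail to show $|\BE h-\mathrm{med}(h)|=O(1/\sqrt{d})$ and absorbing this lower-order term into the constant.
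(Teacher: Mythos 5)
Your proof is correct, but note that the paper itself does not prove this lemma at all: it imports it verbatim as \citet[Lemma 9]{shamir2017optimal}, where it is in turn a restatement of classical L\'evy concentration of measure on the sphere. So the comparison here is between a citation and a self-contained derivation, and your derivation is the standard one and sound. The two reductions (homogeneity in $L$, and chord-versus-arc, using $\norm{\vu-\vv}\leq d_{\mathrm{geo}}(\vu,\vv)$ so that Euclidean-Lipschitz implies geodesic-Lipschitz with the same constant) are in the right direction; the Bakry--\'Emery criterion does give $\mathrm{Ric}_{\BS^{d-1}}=(d-2)g$ and hence a log-Sobolev inequality with constant $2/(d-2)$, and the Herbst argument then yields $\BE[\exp(\lambda(h-\BE h))]\leq \exp\bigl(\lambda^{2}/(2(d-2))\bigr)$ and the one-sided tail $\exp(-(d-2)t^{2}/2)$, with the factor $2$ from symmetrization. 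Your handling of small dimensions is the one place requiring care and you address it correctly: for $d=2$ the Ricci bound degenerates ($\mathrm{Ric}=0$) and Bakry--\'Emery gives nothing, but since a $1$-Lipschitz function on $\BS^{1}$ has oscillation at most the Euclidean diameter $2$, the claimed bound holds trivially for a sufficiently small universal $c$ (the tail is zero for $t>2$ and the right-hand side exceeds $1$ for $t\leq 2$ once $c\leq \log(2)/8$), while $d-2\geq d/2$ covers $d\geq 4$ and $d=3$ costs only a constant. The alternative route via L\'evy's isoperimetric inequality plus the $O(1/\sqrt{d})$ median-to-mean correction is equally standard and valid. What your approach buys over the paper's citation is self-containedness and explicit constant tracking; what it costs is invoking nontrivial machinery (log-Sobolev on manifolds, or spherical isoperimetry) that the paper deliberately treats as a black box.
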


\begin{lemma}[{\citet[Lemma D.2]{ivgi2023dog}}] \label{lem:dog-martingale}
     Let  $S $ be the set of nonnegative and nondecreasing sequences. Let  $c>0 $ and let  $X_{t}$  be a martingale difference sequence adapted to  $\mathcal{F}_{t} $ such that  $|X_{t}| \leq c$  with probability 1 for all  $t$ . 
     Then, for all  $\delta \in(0,1) $ and  $\hat{X}_{t} \in \mathcal{F}_{t-1}$  such that  $|\hat{X}_{t}| \leq c $ with probability 1,
\begin{align*}
    \BP\bigg(\exists t\leq T,\exists \{y_k\}_{k=0}^\infty\in S:\bigg|\sum_{k=0}^{t-1}y_kX_k\bigg|\geq b_t\bigg)\leq \delta,
\end{align*}
where $b_t=8y_t\sqrt{\theta_{t,\delta}\sum_{k=0}^{t-1}(X_k-\hat{X}_k)^2+c^2\theta_{t,\delta}^2}\,$ and $\theta_{t,\delta}=\log(60\log(6t/\delta))$.
\end{lemma}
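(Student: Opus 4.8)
The plan is to decouple the two sources of uniformity in the statement — the supremum over all nonnegative nondecreasing weight sequences $\{y_k\}$ and the supremum over times $t\le T$ — and to reduce the whole claim to a single time-uniform self-normalized concentration inequality for the unweighted partial sums $S_t \triangleq \sum_{k=0}^{t-1} X_k$. Write $V_t \triangleq \sum_{k=0}^{t-1}(X_k-\hat X_k)^2$ for the predictable empirical variance proxy; it is $\fF_{t-1}$-measurable, and since $\hat X_k\in\fF_{k-1}$ and $\BE[X_k\mid\fF_{k-1}]=0$ we have $\BE[(X_k-\hat X_k)^2\mid\fF_{k-1}]=\BE[X_k^2\mid\fF_{k-1}]+\hat X_k^2\ge\BE[X_k^2\mid\fF_{k-1}]$, so $V_t$ is a conservative upper proxy for the conditional variance of $S_t$. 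The target is to show that, with probability at least $1-\delta$,
\begin{align*}
\forall t\le T:\quad |S_t|\le 4\sqrt{\theta_{t,\delta}\,V_t+c^2\theta_{t,\delta}^2}\,,
\end{align*}
after which the full claim follows by two elementary reductions.

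First I would remove the weights by summation by parts. Writing $y_{t-1}=y_0+\sum_{k=1}^{t-1}(y_k-y_{k-1})$ and rearranging Abel's identity gives
\begin{align*}
\sum_{k=0}^{t-1}y_kX_k=y_0\,(S_t-S_0)+\sum_{k=1}^{t-1}(y_k-y_{k-1})\,(S_t-S_k).
\end{align*}
Because $\{y_k\}$ is nonnegative and nondecreasing, the coefficients $y_0,y_1-y_0,\dots,y_{t-1}-y_{t-2}$ are nonnegative and sum to $y_{t-1}\le y_t$, whence
\begin{align*}
\bigg|\sum_{k=0}^{t-1}y_kX_k\bigg|\le y_{t-1}\max_{0\le k<t}|S_t-S_k|\le y_t\max_{0\le k<t}|S_t-S_k|.
\end{align*}
This bound is uniform over all admissible $\{y_k\}$ and its prefactor matches that of $b_t$, so the event $\{\exists\{y_k\}\in S:|\sum_{k<t}y_kX_k|\ge b_t\}$ is contained in $\{\max_{k<t}|S_t-S_k|\ge 8\sqrt{\theta_{t,\delta}V_t+c^2\theta_{t,\delta}^2}\}$. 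Next, since $|S_t-S_k|\le|S_t|+|S_k|\le 2\max_{m\le t}|S_m|$ and both $V_m$ and $\theta_{m,\delta}$ are nondecreasing in $m$, the target display (whose bound is already two-sided) yields $\max_{k<t}|S_t-S_k|\le 8\sqrt{\theta_{t,\delta}V_t+c^2\theta_{t,\delta}^2}$ for every $t\le T$, which is exactly the required containment; the numerical constants are absorbed into the factor $8$.

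It remains to prove the time-uniform self-normalized bound on $|S_t|$, which is the technical heart and where I expect the main difficulty. The approach is the nonnegative-supermartingale method of \citet{howard2021time}. For a tilting parameter $\lambda$ in a suitable range one forms $M_t(\lambda)\triangleq\exp(\lambda S_t-\psi(\lambda)V_t)$ and verifies it is a supermartingale by checking the one-step conditional bound $\BE[\exp(\lambda X_k-\psi(\lambda)(X_k-\hat X_k)^2)\mid\fF_{k-1}]\le 1$; this empirical-Bernstein MGF estimate holds because $|X_k|\le c$ and $|\hat X_k|\le c$ force $|X_k-\hat X_k|\le 2c$, so the increment is bounded and the square $(X_k-\hat X_k)^2$ dominates the curvature of the log-MGF. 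The \emph{delicate} point is that the exponent involves the \emph{observed} empirical square rather than the predictable conditional variance — this is precisely what makes the final bound self-normalized and tuning-free, and it is what forces the specific conditional MGF inequality and restricts the usable range of $\lambda$. A fixed $\lambda$ gives only a linear boundary on $S_t$ valid at one variance scale; converting this into the iterated-logarithm form with growth $\theta_{t,\delta}=\log(60\log(6t/\delta))$ uniformly over all $t$ requires the \emph{stitching} (geometric peeling in $V_t$) or method-of-mixtures argument of \citet{howard2021time}, whose calibration of the crossing boundary fixes the constants $60$ and $6$. Assembling this concentration bound with the two reductions above establishes the lemma.
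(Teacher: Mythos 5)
The paper offers no proof of this lemma---it is quoted directly from \citet[Lemma D.2]{ivgi2023dog}---and your argument reproduces essentially the proof given there: summation by parts over the nonnegative nondecreasing weights bounds $\big|\sum_{k<t}y_kX_k\big|$ by $y_t\max_{k<t}|S_t-S_k|$ with $S_t=\sum_{k<t}X_k$, and applying the time-uniform self-normalized bound with constant $4$ (stated in this paper as Lemma~\ref{lem:dog-unbounded-2}, from \citet[Corollary 1]{carmon2022making}) at the two time indices yields the constant $8=2\cdot 4$ in $b_t$. Your closing sketch of how that unweighted bound itself follows from the supermartingale-plus-stitching machinery of \citet{howard2021time} is only an outline, but since both this paper and the source invoke that result as a black box, your reduction is correct and complete at the same level of rigor.
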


\begin{lemma}[{\citet[Corollary 1]{carmon2022making}}]\label{lem:dog-unbounded-2}
Let $c > 0$ and $X_t$ be a martingale difference sequence adapted
to $\fF_t$ such that $|X_t| \leq c$ with probability $1$ for all $t$. Then, for all $\delta\in (0,1)$ , and $\hat{X}_t\in \fF_{t-1}$ such that $|\hat{X}_t|\leq c$ with probability 1, the following inequality holds
 \begin{align*}
     \BP\bigg(\exists t\leq T: \bigg|\sum_{k=1}^{t}X_k\bigg|>4\sqrt{\theta_{t,\delta}\sum_{k=1}^{t}(X_k-\hat{X}_k)^2+c^2\theta_{t,\delta}^2}\bigg)\leq \delta.
 \end{align*}
\end{lemma}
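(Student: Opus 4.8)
The statement is a time-uniform (anytime-valid) self-normalized concentration inequality for a bounded martingale difference sequence $\{X_t\}$, with the empirical variance proxy $V_t\triangleq\sum_{k=1}^t(X_k-\hat X_k)^2$ as the normalizer rather than the latent predictable quadratic variation. The plan is to build a nonnegative supermartingale, apply Ville's maximal inequality to obtain a boundary that is linear in a variance process, and then stitch over variance scales to turn it into the curved boundary $4\sqrt{\theta_{t,\delta}V_t+c^2\theta_{t,\delta}^2}$. First I would record the two structural facts that drive everything: since $\BE[X_k\,|\,\fF_{k-1}]=0$, for any predictable $\hat X_k\in\fF_{k-1}$ we have $\BE[(X_k-\hat X_k)^2\,|\,\fF_{k-1}]=\BE[X_k^2\,|\,\fF_{k-1}]+\hat X_k^2\ge\BE[X_k^2\,|\,\fF_{k-1}]$, so $V_t$ conditionally dominates the predictable quadratic variation $W_t\triangleq\sum_{k=1}^t\BE[X_k^2\,|\,\fF_{k-1}]$; and $|X_k|\le c$ yields the Bernstein bound $\BE[e^{\lambda X_k}\,|\,\fF_{k-1}]\le\exp(\psi_c(\lambda)\BE[X_k^2\,|\,\fF_{k-1}])$ for $\lambda\in[0,1/c)$, with $\psi_c(\lambda)=(e^{\lambda c}-1-\lambda c)/c^2$.

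Next, for each fixed $\lambda$ I would form $M_t(\lambda)\triangleq\exp(\lambda S_t-\psi_c(\lambda)W_t)$ with $S_t\triangleq\sum_{k=1}^tX_k$; the conditional MGF bound makes $M_t(\lambda)$ a nonnegative supermartingale with $M_0=1$, so Ville's inequality gives $\BP(\exists t\le T:\lambda S_t-\psi_c(\lambda)W_t\ge\log(1/\delta))\le\delta$, a boundary that is linear in $W_t$, and symmetrically for $-S_t$. The crux is then to convert this one-parameter family of linear boundaries into a single curved, self-normalized boundary uniform in $t$, while simultaneously replacing the latent $W_t$ by the observable $V_t$. Following Howard et al., I would either mix $M_t(\lambda)$ against a sub-Gaussian density over $\lambda$ to obtain a closed-form curved boundary, or stitch by partitioning the variance range into geometric epochs $V_t\in[2^j,2^{j+1})$, optimizing $\lambda$ on each epoch and union-bounding over $j$ with a geometric $\delta$-budget. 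Either route yields $|S_t|\le 4\sqrt{\theta_{t,\delta}V_t+c^2\theta_{t,\delta}^2}$, where the doubly-logarithmic level is exactly what the definition $\theta_{t,\delta}=\log(60\log(6t/\delta))$ encodes and the additive $c^2\theta_{t,\delta}^2$ term absorbs the small-variance regime where the quadratic approximation to $\psi_c$ degrades.

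I expect the main obstacle to be this last step: producing the explicit constants ($4$, $60$, $6$) and the precise form of $\theta_{t,\delta}$ requires careful bookkeeping of the stitching union bound together with the predictable-to-empirical transfer from $W_t$ to $V_t$ (the latter needs either an auxiliary supermartingale controlling $V_t-W_t$ for bounded increments or Howard et al.'s empirical-variance line-crossing inequality applied directly). Since the statement is quoted verbatim as \citet[Corollary 1]{carmon2022making}, the cleanest route in this paper is to invoke that inequality---itself built on the time-uniform confidence sequences of Howard et al.---so that the only remaining work is verifying that our $X_k$, $\hat X_k$ and $c$ meet its hypotheses, which the bound $|X_k|,|\hat X_k|\le c$ and the martingale-difference property supply directly.
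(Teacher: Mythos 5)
Your proposal is correct and matches the paper's treatment: the paper states this lemma without proof, importing it verbatim as \citet[Corollary 1]{carmon2022making}, so the only obligation is the hypothesis check you perform at the end (the martingale-difference property and the bounds $|X_k|\leq c$, $|\hat{X}_k|\leq c$, which hold by assumption). Your supermartingale--Ville--stitching sketch, including the predictable-to-empirical variance transfer from $W_t$ to $V_t$, is a faithful outline of how the cited result is actually proved in the Howard et al.\ framework, though the paper itself never reproduces that argument.
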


\section{The Proofs for Section \ref{sec:complexity}}\label{apx:complexity}

We provide the detailed proofs for results under the bounded domain assumption.
 
\subsection{Proof of Lemma \ref{lem:upper-bound-for-g}}\label{apx:upper-bound-for-g}

\begin{proof}[Proof of Lemma \ref{lem:upper-bound-for-g}]
For convenience, we drop the subscripts. 
Recall that $\vg$ is defined as
\begin{align*}
\vg(\vx, \mu; \vv,\vxi) = \frac{d}{2\mu}(F(\vx+\mu\vv;\vxi)-F(\vx-\mu\vv;\vxi))\vv,\quad \text{where} \quad \vv\sim \BU(\BS^{d-1}).
\end{align*}
By Assumption \ref{asm:lipschitz}, the function $F(\vx;\xi)$ is $L$-Lispchitz almost surely.
Thus, we have
\begin{align*}
    \norm{\vg} = \frac{d}{2\mu}|F(\vx+\mu\vv;\vxi)-F(\vx-\mu\vv;\vxi)|\norm{\vv} \leq Ld \norm{\vv}^2=Ld,
\end{align*}
where the last equality uses the fact $\norm{\vv}=1$. 

Next, using the definition of $\vg$ and $\norm{\vv}=1$ again, we have
\begin{align*}
    \BE_{\vv\sim \BU(\BS^{d-1})}[\norm{\vg}^2] = \frac{d^2}{4\mu^2}\cdot \BE_{\vv\sim \BU(\BS^{d-1})}[(F(\vx+\mu\vv;\vxi)-F(\vx-\mu\vv;\vxi))^2].
\end{align*}
Introducing $\alpha\in \BR$, we rewrite the expression as
\begin{align*}
     \BE_{\vv\sim \BU(\BS^{d-1})}[\norm{\vg}^2] = \frac{d^2}{4\mu^2}\cdot \BE_{\vv\sim \BU(\BS^{d-1})}[((F(\vx+\mu\vv;\vxi)-\alpha)-(F(\vx-\mu\vv;\vxi)-\alpha))^2].
\end{align*}
Using the inequality $(a-b)^2\leq 2a^2+2b^2$, this becomes
\begin{align*}
   \BE_{\vv\sim \BU(\BS^{d-1})}[\norm{\vg}^2]  \leq \frac{d^2}{2\mu^2}\cdot (\BE_{\vv\sim \BS}[(F(\vx+\mu\vv;\vxi)-\alpha)^2]+ \BE_{\vv\sim \BS}[(F(\vx-\mu\vv;\vxi)-\alpha)^2]).
\end{align*}
Since the distribution $\vv\sim \BU(\BS^{d-1})$ is symmetric around the origin,  the two terms are equal. 
Applying this property, we derive
\begin{align}\label{eq:upsd-nonsmooth}
    \BE_{\vv\sim \BU(\BS^{d-1})}[\norm{\vg}^2] \leq \frac{d^2}{\mu^2}\cdot \BE_{\vv\sim \BU(\BS^{d-1})}[(F(\vx+\mu\vv;\vxi)-\alpha)^2].
\end{align}

Define $h(\vv):= F(\vx+\mu\vv;\vxi)$.
Since $F(\vx;\xi)$ is $L$-Lispchitz w.r.t. $\vx$ under Assumption \ref{asm:lipschitz}, then we have $h(\vv)$ is $\mu L$-Lipschitz w.r.t. $\vv$. 
Applying the concentration bound from Lemma \ref{lem:shamir-concentration}, we obtain
\begin{align*}
    \BE_{\vv\sim \BU(\BS^{d-1})}[(h(\vv)-\BE_{\vv\sim \BU(\BS^{d-1})}[h(\vv)])^2] &= \int_0^\infty \BP((h(\vv)-\BE_{\vv\sim \BU(\BS^{d-1})}[h(\vv)])^2>t)\, \mathrm{d}t\\
    & = \int_0^\infty \BP(|h(\vv)-\BE_{\vv\sim \BU(\BS^{d-1})}[h(\vv)]|>\sqrt{t})\, \mathrm{d}t\\
    &\leq \int_0^\infty 2\exp(-\frac{cdt}{\mu^2L^2})\,\mathrm{d}t=\frac{2\mu^2L^2}{cd},
\end{align*}
where $c>0$ is a numerical constant. 
The first equality holds by the property \citep[Lemma 2.2.13]{durrett2019probability} that if a random variable $Y>0$ almost surely, then 
\begin{align*}
\BE[Y] = \int_0^\infty \BP(Y>y)\, \mathrm{d}y.
\end{align*}
Let $\alpha= \BE_{\vv\sim \BU(\BS^{d-1})}[h(\vv)]$.
Combining the concentration bound above and inequality (\ref{eq:upsd-nonsmooth}), we have the following inequality
\begin{align*}
   \BE_{\vv\sim \BU(\BS^{d-1})}[\norm{\vg}^2] \leq \frac{d^2}{\mu^2}\cdot \BE_{\vv\sim \BU(\BS^{d-1})}[(h(\vv)-\BE_{\vv\sim \BU(\BS^{d-1})}[h(\vv)])^2]\leq  \frac{2}{c} L^2 d.
\end{align*}
Thus, using the law of total expectation, we have $\BE[\norm{\vg}^2]=\BE[\BE_{\vv\sim \BU(\BS^{d-1})}[\norm{\vg}^2]]\leq 2L^2d/c$.
\end{proof}

\subsection{Proof of Lemma \ref{lem:noise-from-g}}\label{apx:noise-from-g}

\begin{proof}[Proof of Lemma \ref{lem:noise-from-g}]
We begin by defining the filtration as  
    $\fF_k=\sigma(\vv_i, \vxi_i, 0\leq i\leq k)$ for $k\in \BN$ and $\fF_{-1}=\{\emptyset,\Omega\}$.
Then, we define the stochastic processes $(X_k,k\in \BN)$ and $(\hat{X}_k,k\in \BN)$ as follows
\begin{align}\label{eq:def-martingale}
    X_k=\frac{1}{\bar{s}_k}\inner{\vDelta_k}{\vx_k-\vx_\star} \in \fF_k \quad\text{and}\quad \hat{X}_k = \frac{1}{\bar{s}_k}\inner{\nabla f_{\mu_k}(\vx_k)}{\vx_k-\vx_\star}\in \fF_{k-1},
\end{align}
where $\vDelta_k=\nabla f_{\mu_k}(\vx_k)-\vg_k$.
Thus,  we derive that $(X_k,k\in \BN)$ is adapted to $ (\fF_k,k\in \BN)$ and $(\hat{X}_k,k\in \BN)$ is predictable w.r.t. $ (\fF_k,k\in \BN)$.
In addition, using the fact that given $\fF_{k-1}$, the stochastic difference $ \vg_k$ is an unbiased estimator for $\nabla f_{\mu_k}(\vx_k)$, we obtain 
\begin{align*}
    \BE[X_k\,|\,\fF_{k-1}] = \frac{1}{\bar{s}_k}\cdot \BE[\inner{\nabla f_{\mu_k}(\vx_k)-\vg_k}{\vx_k-\vx_\star}\,|\,\fF_{k-1}]=0,\quad \text{where}\quad k\in\BN.
\end{align*}
The first inequality holds since $\bar{s}_k\in \fF_{k-1}$. 
This implies that $\left(X_k,\mathcal{F}_k, k\in \mathbb{N}\right)$ is a martingale difference process.

Next, from Lemma \ref{lem:upper-bound-for-g}, we have $\norm{\vg_k}\leq  Ld$, which implies that
\begin{align*}
    \norm{\nabla f_{\mu_k}(\vx_k)} = \norm{\BE_{\vv_k, \vxi_k}[\vg_k]}\leq \BE_{\vv_k,\vxi_k}[\norm{\vg_k}]\leq Ld,
\end{align*}
The first inequality because $\vg_k$ is an unbiased estimator of $\nabla f_{\mu_k}(\vx_k)$. Then, we have
\begin{align*}
    \norm{\vDelta_k}\leq \norm{\vg_k}+\norm{\nabla f_{\mu_k}(\vx_k)}\leq 2Ld,\quad \text{where} \quad k\in \mathbb{N}.
\end{align*}
From equation (\ref{eq:def-martingale}), we have $|X_k|\leq \norm{\vDelta_k}$ and $|\hat{X}_k|\leq \norm{\nabla f_{\mu_k}(\vx_k)}$.
Hence, we obtain that both $X_k$ and $\hat{X}_k$ are bounded above by $2Ld$. 

Define the sequence $ Y_{k}:=\bar{r}_{k} \bar{s}_{k}$ for $k\in \BN$, which is nonnegative and nondecreasing. 
Now,  using the concentration bound of martingale difference introduced in Lemma \ref{lem:dog-martingale} with $\delta\in (0,1)$ and $c=2Ld$,  we derive the following upper bound
 \begin{align*}
     \BP\bigg(\exists t\leq T: \bigg|\sum_{k=0}^{t-1}\bar{r}_k\inner{\vDelta_k}{\vx_k-\vx_\star}\bigg|\geq b_t\bigg) &= \BP\bigg(\exists t\leq T: \bigg|\sum_{k=0}^{t-1} Y_kX_k \bigg|\bigg)\\
     & \leq \BP\bigg(\exists t\leq T, \exists \{y_k\}_{k=0}^\infty\in S: \bigg|\sum_{k=0}^{t-1} y_kX_k\bigg|\geq b_t\bigg)\\
     & \leq \delta,
 \end{align*}
where $b_t= 8 \bar{r}_{t-1} \bar{d}_{t-1} \sqrt{\theta_{t, \delta} G_{t-1}+4L^2d^2\theta_{t, \delta}^{2}}\,$, $\theta_{t,\delta}=\log(60\log(6t/\delta)$ and $S$ is the set of non-negative and non-decreasing sequences.
\end{proof}

\subsection{Proof of Lemma \ref{lem:noise-from-mu}}\label{apx:noise-from-mu}
\begin{proof}[Proof of Lemma \ref{lem:noise-from-mu}]
Define the partial sum as $S_t:=\sum_{k=1}^t 1/\sqrt{k}$ for $t\in \BN_+$. The upper bound of $S_t$ is given by the following integral
\begin{align*}
   S_t\leq  1+\int_1^t \frac{1}{\sqrt{x}} \,\mathrm{d} x =2\sqrt{t}-1\leq 2\sqrt{t}.
\end{align*}
Based on this upper bound and the formula of $\mu_k$ shown in equation (\ref{def:smooth}), we can bound the noise as follows
\begin{align*}
\sum_{k=0}^{t-1}2L\bar{r}_k\mu_k \leq 2L\bar{r}_{t-1}\sum_{k=0}^{t-1}\mu_k = 2L\sqrt{d}\cdot\bar{r}_{t-1}S_t=4L \bar{r}_{t-1}\sqrt{dt}.
\end{align*}
\end{proof}

\subsection{Proof of Proposition \ref{prop}}\label{apx:prop}
\begin{proof}[Proof of Proposition \ref{prop}]
Combining Lemma \ref{lem:weighted-regret}, \ref{lem:noise-from-g} and \ref{lem:noise-from-mu} along with equations (\ref{eq:gapsmooth}) and (\ref{eq:threecomponentssmooth}), we obtain that, with probability at least $1-\delta$,  the upper bound for  $f(\bar{\vx}_t)-f(\vx_\star)$ is 
\begin{align*}
\frac{(2\bar{s}_t+\bar{r}_t)\sqrt{G_{t-1}} + 8\bar{s}_t\sqrt{\theta_{t,\delta}G_{t-1}+4L^2d^2\theta_{t,\delta}^2}\,+4\bar{r}_tL\sqrt{dt}}{\sum_{k=0}^{t-1}\bar{r}_k/\bar{r}_t},
\end{align*} 
where we ignore the constants and use the fact that $\bar{r}_{t-1}\leq \bar{r}_t$ and $\bar{s}_{t-1}\leq \bar{s}_t$.
Applying the inequality $\sqrt{a^2+b^2}\,\leq a+b$, the gap simplifies to
\begin{align*}
         \frac{(2\bar{s}_t+\bar{r}_t)\sqrt{G_{t-1}} + 8\bar{s}_t(\theta_{t,\delta}\sqrt{G_{t-1}}+2\theta_{t,\delta}Ld)+4\bar{r}_t L\sqrt{dt}}{\sum_{k=0}^{t-1}\bar{r}_k/\bar{r}_t}.
\end{align*}
Finally, using the triangle inequality $\bar{s}_t\leq \bar{r}_t+s_0$, the gap becomes
\begin{align*}
    16\cdot \frac{\theta_{t,\delta}(\bar{r}_t+s_0)(\sqrt{G_{t-1}}+ Ld+L\sqrt{dt}\,)}{\sum_{k=0}^{t-1}\bar{r}_k/\bar{r}_t}.
\end{align*}
\end{proof}

\subsection{Proof of Theorem \ref{thm}}\label{apx:thm}

First, we illustrate some useful properties of conditional expectation.

\begin{lemma}\label{lem:ce-inequality}
 Let $(\Omega, \fF_0, \BP)$ be a probability space, $X_1, X_2$ be two random variables on it, and $\BE[\cdot\, |\,\fF]$ be the corresponding conditional expectations where $\fF\subset \fF_0$.
Then , if $X_1\leq X_2$ on $B\in \fF$, then $\BE[X_1\,|\,\fF]\leq \BE[X_2\,|\,\fF]$ on $B$.
\end{lemma}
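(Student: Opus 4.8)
The final statement to prove is Lemma~\ref{lem:ce-inequality}, a standard monotonicity property of conditional expectation restricted to an event $B\in\fF$.

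The plan is to reduce the claim to the defining property of conditional expectation, namely that for any $A\in\fF$ one has $\int_A \BE[X\,|\,\fF]\,\mathrm{d}\BP=\int_A X\,\mathrm{d}\BP$. First I would set $Z\triangleq\BE[X_2\,|\,\fF]-\BE[X_1\,|\,\fF]$, which by linearity of conditional expectation equals $\BE[X_2-X_1\,|\,\fF]$, and note $Z$ is $\fF$-measurable. The goal is then to show $Z\geq 0$ almost everywhere on $B$. The natural approach is a contradiction/contrapositive argument: suppose $Z<0$ on a non-null subset of $B$, and derive a violation of the averaging identity.

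The key steps, in order: (i) define the event $A\triangleq B\cap\{Z<0\}$, and observe $A\in\fF$ since both $B\in\fF$ and $\{Z<0\}\in\fF$ by $\fF$-measurability of $Z$; (ii) integrate over $A$ using the defining property to get
\begin{align*}
\int_A Z\,\mathrm{d}\BP=\int_A (X_2-X_1)\,\mathrm{d}\BP;
\end{align*}
(iii) because $A\subset B$ and $X_1\leq X_2$ on $B$ by hypothesis, the right-hand side is nonnegative, so $\int_A Z\,\mathrm{d}\BP\geq 0$; (iv) but on $A$ we have $Z<0$, so if $\BP(A)>0$ then $\int_A Z\,\mathrm{d}\BP<0$, a contradiction. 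Hence $\BP(A)=0$, i.e.\ $Z\geq 0$ almost everywhere on $B$, which is precisely $\BE[X_1\,|\,\fF]\leq\BE[X_2\,|\,\fF]$ on $B$.

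This argument is essentially routine measure theory, so there is no serious obstacle; the only point requiring mild care is the treatment of the exceptional null set (the conclusion holds almost surely on $B$, as is standard for conditional expectation statements) and ensuring the integrability needed for the averaging identity, which is available here because in the intended application the relevant random variables are bounded (e.g.\ $|f(\bar{\vx}_{\tau_T})-f(\vx_\star)|\leq LD_\fX$). I would also remark that replacing strict inequality by the threshold events $\{Z\leq -1/n\}$ and taking a union over $n$ gives a fully rigorous handling of the null set if one wishes to avoid appealing directly to $\{Z<0\}$.
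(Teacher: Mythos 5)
Your proof is correct and follows essentially the same route as the paper's: both reduce the claim to the averaging identity $\int_A \BE[X\,|\,\fF]\,\mathrm{d}\BP=\int_A X\,\mathrm{d}\BP$ applied to an $\fF$-measurable subset of $B$ where the inequality would fail, and conclude that set is null. The only cosmetic difference is that the paper works with the $\epsilon$-level sets $\{\BE[X_1\,|\,\fF]-\BE[X_2\,|\,\fF]\geq\epsilon\}$ from the outset, which is exactly the $1/n$ refinement you mention in your closing remark.
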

\begin{proof}[Proof of Lemma \ref{lem:ce-inequality}]
We follow the approach of \citet[Theorem 4.1.2]{durrett2019probability} to prove the lemma.
Given any $\epsilon>0$, we define the event $A=\{\omega\in \Omega:\BE[X_1\,|\,\fF]-\BE[X_2\,|\,\fF]\geq  \epsilon\}$, which satisfies $A\in \fF$ since both conditional expectations are $\fF$-measurable. 
Thus, we have $A\cap B \in \fF$ since $\fF$ is closed under intersections.
Then, by the definition of conditional expectation, we have
\begin{align*}
    \int_{A\cap B} \BE[X_1\,|\,\fF]-\BE[X_2\,|\,\fF] \,\mathrm{d}\BP =  \int_{A\cap B} X_1-X_2 \,\mathrm{d}\BP \leq 0.
\end{align*}
By the definition of $A$, we further have
\begin{align*}
     \int_{A\cap B} \BE[X_1\,|\,\fF]-\BE[X_2\,|\,\fF] \,\mathrm{d}\BP \geq    \int_{A\cap B} \epsilon \,\mathrm{d}\BP = \epsilon\cdot \BP(A\cap B).
\end{align*}
Combining the two inequality, we have $\BP(A\cap B)=0$, which implies that
\begin{align*}
    \BP(\omega\in B: \BE[X_1\,|\,\fF]-\BE[X_2\,|\,\fF]\geq  \epsilon)=0.
\end{align*}
Thus, we have $\BE[X_1\,|\,\fF]\leq \BE[X_2\,|\,\fF]$ almost surely on $B$.
\end{proof}

\begin{lemma}[{\citet[Theorem 4.1.13]{durrett2019probability}}]\label{lem:ce-exchange}
 If $\fF_0\subset \fF$, then $\BE[X\,|\, \fF_0]=\BE[ \BE[X\,|\,\fF]\,|\,\fF_0]$.   
\end{lemma}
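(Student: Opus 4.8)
The plan is to verify that the right-hand side meets the two defining conditions of $\BE[X\,|\,\fF_0]$ and then invoke uniqueness. Recall that $\BE[X\,|\,\fF_0]$ is characterized as the almost surely unique random variable $Y$ that is $\fF_0$-measurable and satisfies $\int_A Y\,\mathrm{d}\BP=\int_A X\,\mathrm{d}\BP$ for every $A\in\fF_0$; this is exactly the integral characterization already exploited in the proof of Lemma \ref{lem:ce-inequality}. I would therefore set $Y\triangleq\BE[\BE[X\,|\,\fF]\,|\,\fF_0]$ and check these two properties, with integrability of $X$ guaranteeing that all the conditional expectations involved exist.

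Measurability is free: by construction $Y$ is an outer conditional expectation with respect to $\fF_0$, hence $\fF_0$-measurable. For the integral property I would fix an arbitrary $A\in\fF_0$ and chain two applications of the defining identity. First, applying the definition of $\BE[\cdot\,|\,\fF_0]$ to the random variable $\BE[X\,|\,\fF]$ gives
\begin{align*}
\int_A Y\,\mathrm{d}\BP = \int_A \BE[X\,|\,\fF]\,\mathrm{d}\BP.
\end{align*}
Second, I would use the hypothesis $\fF_0\subset\fF$: since $A\in\fF_0$ we also have $A\in\fF$, so the defining identity for $\BE[\cdot\,|\,\fF]$, now legitimately tested against the set $A$ because $A$ lies in the finer field, yields
\begin{align*}
\int_A \BE[X\,|\,\fF]\,\mathrm{d}\BP = \int_A X\,\mathrm{d}\BP.
\end{align*}
Combining the two displays shows $\int_A Y\,\mathrm{d}\BP=\int_A X\,\mathrm{d}\BP$ for all $A\in\fF_0$.

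Having verified that $Y$ is $\fF_0$-measurable and reproduces the integrals of $X$ over every $\fF_0$-set, the uniqueness clause of the conditional-expectation definition forces $Y=\BE[X\,|\,\fF_0]$ almost surely, which is the claim. There is no deep obstacle here; the single delicate point, and the only place the hypothesis $\fF_0\subset\fF$ is actually used, is the second step, where I must ensure that a testing set drawn from the coarse field $\fF_0$ is admissible in the integral identity for the fine field $\fF$. The containment $\fF_0\subset\fF$ is precisely what licenses this, and the argument collapses without it; indeed the opposite composition $\BE[\BE[X\,|\,\fF_0]\,|\,\fF]=\BE[X\,|\,\fF_0]$ would instead hold trivially, since $\BE[X\,|\,\fF_0]$ is already $\fF$-measurable, which clarifies why only the coarser field may sit on the outside.
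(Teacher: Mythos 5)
Your proof is correct and is exactly the standard argument behind the result the paper simply cites from \citet[Theorem 4.1.13]{durrett2019probability}: verify that $\BE[\BE[X\,|\,\fF]\,|\,\fF_0]$ is $\fF_0$-measurable, check the defining integral identity over each $A\in\fF_0$ (using $A\in\fF_0\subset\fF$ in the second step), and invoke almost-sure uniqueness. You also correctly isolate where the containment hypothesis is used and why the reverse composition is trivial, so there is nothing to add.
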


\begin{lemma}[{\citet[Theorem 4.1.10]{durrett2019probability}}]\label{lem:ce-jensen}
If the function $\phi$ is convex and the random variable $X$ satisfies $\BE|X|, \BE|\phi(X)|<\infty$, then
\begin{align*}
    \phi(\BE[X\,|\,\fF])\leq \BE[\phi(X)\,|\,\fF].
\end{align*}
\end{lemma}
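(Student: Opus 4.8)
The plan is to prove the conditional Jensen inequality by reducing it to the affine case, where linearity and monotonicity of conditional expectation apply directly, and then passing to a countable supremum. First I would exploit the fact that a convex function $\phi:\R\to\R$ is the supremum of a countable family of affine minorants: for each rational $q\in\BQ$, convexity guarantees a supporting line $\ell_q(x)=a_q x+b_q$ with $\ell_q(x)\le \phi(x)$ for all $x$ and $\ell_q(q)=\phi(q)$ (taking, e.g., $a_q$ between the left and right derivatives of $\phi$ at $q$, which exist by convexity). Since $\phi$ is continuous on $\R$, one checks that $\phi(x)=\sup_{q\in\BQ}\ell_q(x)$ for every $x\in\R$, so $\phi$ is realized as a \emph{countable} supremum of affine functions.

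Next, for each fixed $q$ I would control the conditional expectation of the affine minorant. By linearity of conditional expectation together with the pointwise inequality $\ell_q(X)\le\phi(X)$ (which holds on all of $\Omega$, not merely almost surely), Lemma~\ref{lem:ce-inequality} gives, almost surely, $\E[\phi(X)\mid\fF]\ge \E[\ell_q(X)\mid\fF]=a_q\,\E[X\mid\fF]+b_q=\ell_q(\E[X\mid\fF])$. The integrability hypotheses $\E|X|,\E|\phi(X)|<\infty$ ensure both conditional expectations are well-defined and finite; note in passing that $\E|\ell_q(X)|\le |a_q|\,\E|X|+|b_q|<\infty$ as well, so the affine term causes no integrability issue.

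Then I would combine these countably many inequalities. For each $q$ let $N_q$ be the null set off which the displayed inequality holds; since $\BQ$ is countable, $N=\bigcup_{q\in\BQ}N_q$ is again a null set. On $\Omega\setminus N$ all the inequalities hold simultaneously, so taking the supremum over $q\in\BQ$ yields $\E[\phi(X)\mid\fF]\ge \sup_{q\in\BQ}\ell_q(\E[X\mid\fF])=\phi(\E[X\mid\fF])$ almost surely, which is the claim.

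The main obstacle — and the reason the conditional statement is more delicate than the ordinary Jensen inequality — is precisely this last countability bookkeeping: for a single affine function the inequality only holds almost surely, and the exceptional null set depends on the function, so one cannot naively take the supremum over the uncountable family of all affine minorants of $\phi$ (an uncountable union of null sets need not be null). The crux is therefore the structural fact that convexity lets us restrict to a countable subfamily, namely the supporting lines at rational points, whose supremum still recovers $\phi$ everywhere; verifying $\phi(x)=\sup_{q\in\BQ}\ell_q(x)$ via continuity is the step that makes the whole argument go through.
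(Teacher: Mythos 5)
The paper does not prove this lemma itself --- it is quoted directly from \citet[Theorem 4.1.10]{durrett2019probability} --- and your argument is correct and is essentially the same proof given in that cited source: represent $\phi$ as a countable supremum of affine minorants with the inequality $\ell_q \le \phi$ holding pointwise, apply linearity and monotonicity of conditional expectation to each $\ell_q$, and discard a countable union of null sets before taking the supremum. Your emphasis on the countability bookkeeping (supporting lines at rational points rather than the full uncountable family) is exactly the crux that makes the conditional version go through, so there is nothing to add.
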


Then we provide the proof of Theorem \ref{thm}.

\begin{proof}[Proof of Theorem \ref{thm}]
Recall that
\begin{align*}
    \Omega_\delta= \{\omega\in \Omega_0: \forall  t\leq T, \bigg|\sum_{k=0}^{t-1}\bar{r}_k\inner{\vDelta_k}{\vx_k-\vx_\star}\bigg|< b_t\} \in \fF_0,
\end{align*}
which satisfies $\BP(\Omega_\delta)\geq 1-\delta$ by Lemma \ref{lem:noise-from-g}.
Thus, Proposition \ref{prop} can be expressed as follows: for any $\omega\in \Omega_\delta$, we have
\begin{align*}
    f(\bar{\vx}_t)-f(\vx_\star)\leq   16\cdot \frac{\theta_{t,\delta}(\bar{r}_t+s_0)(\sqrt{G_{t-1}}+ Ld+L\sqrt{dt}\,)}{\sum_{k=0}^{t-1}\bar{r}_k/\bar{r}_t},
\end{align*}
which holds for all $t\leq T$.
Combining Proposition \ref{prop} and equation (\ref{eq:tau}), we obtain that for any $\omega\in \Omega_\delta$, the following inequality holds:
\begin{align*}
  f(\bar{\vx}_{\tau_T}) -f(\vx_\star)  \leq c_0\cdot \frac{\theta_{\tau_T,\delta}(\bar{r}_{\tau_T}+s_0)(\sqrt{G_{\tau_T-1}}+ Ld+L\sqrt{dt}\,)}{T}\log_+\bigg(\frac{\bar{r}_{\tau_T}}{r_\epsilon}\bigg),
\end{align*}
where $c_0$ is a constant and
$\tau_T=\arg\max_{t\leq T}\sum_{k=0}^{t-1}\bar{r}_k/\bar{r}_t$.
Note that $\theta_{t,\delta}$, $\bar{r}_t$ and $G_t$ are non-deceasing w.r.t $t$.
Since $\tau_T\leq T$, the bound becomes
\begin{align*}
    f(\bar{\vx}_{\tau_T}) -f(\vx_\star)  \leq c_0\cdot\frac{\theta_{T,\delta}(\bar{r}_{T}+s_0)(\sqrt{G_{T-1}}+Ld+L\sqrt{dT}\,)}{T}\log_+\bigg(\frac{\bar{r}_{T}}{r_\epsilon}\bigg).
\end{align*}
Note that the diameter is $D_\fX$ and $r_\epsilon\leq D_\fX$. 
Thus, we derive $\bar{r}_T\leq  D_\fX$.
Substituting this into the bound, we obtain
\begin{align*}
    f(\bar{\vx}_{\tau_T}) -f(\vx_\star)  \leq c_0\cdot  \frac{\theta_{T,\delta}D_\fX(\sqrt{G_{T-1}}+ Ld+L\sqrt{dT}\,)}{T}\log_+\bigg(\frac{D_\fX}{r_\epsilon}\bigg).
\end{align*}
Recall that $\fF_\delta = \{A: A\subset \Omega_\delta\} \cap \fF_0$ is a sigma field satisfying $\fF_\delta\subset\fF_0$ and $\Omega_\delta\in \fF_\delta$.
Then, applying Lemma \ref{lem:ce-inequality}, for any $\omega\in \Omega_\delta$, we have
\begin{align}\label{eq:thm1}
       \BE[f(\bar{\vx}_{\tau_T}) -f(\vx_\star)\,|\,\fF_\delta]  \leq c_0\cdot  \frac{\theta_{T,\delta}D_\fX(\BE[\sqrt{G_{T-1}}\,|
       \,\fF_\delta]+ Ld+L\sqrt{dT}\,)}{T}\log_+\bigg(\frac{D_\fX}{r_\epsilon}\bigg).
\end{align}

Using Lemma \ref{lem:upper-bound-for-g} and Lemma \ref{lem:ce-exchange}, the conditional expectation of $G_{T-1}$ can be bounded as follows
\begin{align*}
    \BE[G_{T-1}\,|\,\fF_\delta]= \BE[\BE[G_{T-1}]\,|\,\fF_\delta]=
    \sum_{k=0}^{T-1}\BE[\BE[\norm{\vg_k}^2]\,|\,\fF_\delta] \leq cL^2dT.
\end{align*}
Since the square root function is concave, applying Jensen's inequality in Lemma \ref{lem:ce-jensen} gives
\begin{align*}
    \BE[\sqrt{G_{T-1}}\,|\,\fF_\delta]\leq \sqrt{\BE[G_{T-1}\,|\,\fF_\delta]} \leq L\sqrt{cdT}.
\end{align*}
Substituting this back into equation (\ref{eq:thm1}),  for any $\omega\in \Omega_\delta$, we have
\begin{align}
       \BE[f(\bar{\vx}_{\tau_T}) -f(\vx_\star)\,|\,\fF_\delta]  \leq c_1\bigg(\frac{d}{T}+\frac{\sqrt{d}}{\sqrt{T}}\bigg)\theta_{T,\delta}L D_\fX\log_+\bigg(\frac{D_\fX}{r_\epsilon}\bigg),
\end{align}
where $c_1$ is a constant.
\end{proof}

\section{The Proofs for Section \ref{sec:unbounded}}\label{apx:unbounded}

We provide the detailed proofs for results without the bounded domain assumption.

\subsection{Proof of Proposition \ref{prop:unbounded-1}}\label{apx:unbounded-prop1}
For simplicity, we define the following stopping time
\begin{align*}
    \zeta \triangleq \min\{t\in \BN \mid \bar{r}_t>3s_0\}.
\end{align*}
Using this stopping time, we introduce the modified step size
\begin{align*}
    \Tilde{\eta}_t=\eta_t\cdot \BI(t< \zeta) ,
\end{align*}
where the indicator function $\BI(t< \zeta)$ equals $1$ if $t<\zeta$, and $0$ otherwise.

Before proving the proposition, we first present and prove several supporting lemmas.
\begin{lemma}\label{lem:unbounded-1}
Given $T\in \BN_+$, for any $t\leq T$, the following inequality holds
\begin{align*}
    \sum_{k=0}^{t}\Tilde{\eta}_k^2\norm{\vg_k}^2 \leq \frac{s_0^2}{2}.
\end{align*}
\end{lemma}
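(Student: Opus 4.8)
The plan is to reduce the weighted sum to the quantity controlled by Lemma~\ref{lem:dog-unbounded-1}. Abbreviate $D\triangleq 16\theta_{T,\delta}d^2\bar{L}^2$, so that $G_k^\prime=8^4\theta_{T,\delta}\log_+^2(k+2)\,(G_{k-1}+D)$ and $\Tilde{\eta}_k^2=\bar{r}_k^2\,\BI(k<\zeta)/G_k^\prime$. First I would use that the indicator restricts the sum to indices $k<\zeta$; by definition of the stopping time $\zeta$ and the monotonicity of $\{\bar{r}_k\}$, every such index satisfies $\bar{r}_k\leq 3s_0$, hence $\bar{r}_k^2\leq 9s_0^2$. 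Dropping the nonnegative indicator and pulling out the constants then yields
\begin{align*}
\sum_{k=0}^{t}\Tilde{\eta}_k^2\norm{\vg_k}^2
\leq \frac{9s_0^2}{8^4\theta_{T,\delta}}\sum_{k=0}^{t}\frac{\norm{\vg_k}^2}{\log_+^2(k+2)\,(G_{k-1}+D)}.
\end{align*}

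The core is to show the trailing sum is bounded by a universal constant. I would invoke Lemma~\ref{lem:dog-unbounded-1} for the nonnegative nondecreasing sequence $a_k\triangleq G_k+D$, which satisfies $a_{-1}=G_{-1}+D=D$ and $a_k-a_{k-1}=\norm{\vg_k}^2$, giving $\sum_{k=0}^{t}\norm{\vg_k}^2/\big((G_k+D)\log_+^2((G_k+D)/D)\big)\leq 1$. To align the trailing sum with this expression, two discrepancies must be absorbed. For the denominator $G_{k-1}+D$ versus $G_k+D$, I would use $\norm{\vg_k}^2\leq L^2d^2\leq\bar{L}^2d^2$ (Lemma~\ref{lem:upper-bound-for-g} together with $\bar{L}\geq L$) and $D\geq 16\bar{L}^2d^2$ to get $a_k\leq(1+\tfrac{1}{16\theta_{T,\delta}})a_{k-1}\leq\tfrac{17}{16}a_{k-1}$, i.e.\ $G_{k-1}+D\geq\tfrac{16}{17}(G_k+D)$. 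For the logarithmic factor, I would replace $\log_+^2(k+2)$ by $\log_+^2((G_k+D)/D)$: since $G_k\leq(k+1)\bar{L}^2d^2\leq(k+1)D$ we have $(G_k+D)/D\leq k+2$, and monotonicity of $\log_+$ on $[1,\infty)$ gives $\log_+^2(k+2)\geq\log_+^2((G_k+D)/D)$.

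Combining the two adjustments bounds each summand of the trailing sum by $\tfrac{17}{16}$ times the matching summand in Lemma~\ref{lem:dog-unbounded-1}, so the trailing sum is at most $17/16$, and therefore
\begin{align*}
\sum_{k=0}^{t}\Tilde{\eta}_k^2\norm{\vg_k}^2
\leq \frac{9s_0^2}{8^4\theta_{T,\delta}}\cdot\frac{17}{16}\leq\frac{s_0^2}{2},
\end{align*}
where the final inequality uses $\theta_{T,\delta}\geq 1$ and the generous numerical factor $8^4$. The step I expect to be the main obstacle is the logarithm-argument substitution: the smoothing-adjusted denominator $G_k^\prime$ carries the iteration-count logarithm $\log_+^2(k+2)$, whereas Lemma~\ref{lem:dog-unbounded-1} requires the data-dependent logarithm $\log_+^2(a_k/a_{-1})$, and it is precisely the a priori bound $\norm{\vg_k}^2\leq\bar{L}^2d^2$ (hence $G_k\leq(k+1)D$) that lets one dominate the latter by the former.
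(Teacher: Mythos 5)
Your proof is correct and follows essentially the same route as the paper's: restrict to $k<\zeta$ to get $\bar r_k^2\le 9s_0^2$, then reduce the remaining sum to Lemma~\ref{lem:dog-unbounded-1} applied to a shifted sequence $G_k+\mathrm{const}$, using $\norm{\vg_k}\le Ld\le\bar Ld$ both to pass from $G_{k-1}$ to $G_k$ in the denominator and to dominate $\log_+^2((G_k+\mathrm{const})/\mathrm{const})$ by $\log_+^2(k+2)$. The only (immaterial) difference is that the paper shifts by $d^2\bar L^2$ and absorbs the index shift additively via $G_{k-1}+2d^2\bar L^2\ge G_k+d^2\bar L^2$, whereas you shift by $16\theta_{T,\delta}d^2\bar L^2$ and pay a multiplicative $17/16$.
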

\begin{proof}[Proof of Lemma \ref{lem:unbounded-1}]
By the definition of $\Tilde{\eta}_k$ and using $\norm{\vg_k}^2 = G_k-G_{k-1}$, we can bound the term as 
\begin{align}\label{eq:unbounded-1-1}
   \sum_{k=0}^{t}\Tilde{\eta}_k^2\norm{\vg_k}^2  \leq   \sum_{k=0}^{\zeta-1}\eta_k^2\norm{\vg_k}^2 =\sum_{k=0}^{\zeta-1} \frac{\bar{r}_k^2 }{G_k^\prime}\cdot \norm{\vg_k}^2 = \sum_{k=0}^{\zeta-1} \frac{\bar{r}_k^2(G_k-G_{k-1}) }{G_k^\prime}\leq \bar{r}_{\zeta-1}^2 \sum_{k=0}^{\zeta-1} \frac{G_k-G_{k-1} }{G_k^\prime},
\end{align}
where $G_{-1}=0$.
The lower bound for $G_k^\prime$ is given by
\begin{align*}
    G_k^\prime \geq 8^4\theta_{T,\delta}(G_{t-1}+2d^2\bar{L}^2)\log_+^2\bigg(\frac{(k+1)d^2\bar{L}^2+d^2\bar{L}^2}{d^2\bar{L}^2}\bigg)\geq  8^4\theta_{T,\delta}(G_{k}+d^2\bar{L}^2)\log_+^2\bigg(\frac{G_k+d^2\bar{L}^2}{d^2\bar{L}^2}\bigg),
\end{align*}
where the last inequality  follows from $\norm{\vg_k}\leq Ld$ ( See Lemma \ref{lem:upper-bound-for-g}).
Substituting this bound into (\ref{eq:unbounded-1-1}), we obtain
\begin{align}\label{eq:unbounded-1-2}
      \sum_{k=0}^{t}\Tilde{\eta}_k^2\norm{\vg_k}^2 \leq \frac{\bar{r}_{\zeta-1}^2}{8^4\theta_{T,\delta}}\cdot \sum_{k=0}^{\zeta-1} \frac{G_{k}-G_{k-1}}{(G_{k}+d^2\bar{L}^2)\log_+^2\bigg(\frac{G_k+d^2\bar{L}^2}{d^2\bar{L}^2}\bigg)}\leq \frac{\bar{r}_{\zeta-1}^2}{8^4\theta_{T,\delta}}\leq \frac{9s_0^2}{8^4\theta_{T,\delta}}\leq \frac{s_0^2}{2}.
\end{align}
The second inequality holds by applying Lemma \ref{lem:dog-unbounded-1} with $a_k=G_k+d^2\bar{L}^2$.
\end{proof}

\begin{lemma}\label{lem:unbounded-2}
For any $\delta\in (0, 1)$, the following inequality holds
\begin{align*}
    \BP\bigg(\exists t\leq T: \bigg|\sum_{k=0}^{t-1}\Tilde{\eta}_k\inner{\vDelta_k}{\vx_k-\vx_\star}\bigg|>s_0^2\bigg)\leq \delta,
\end{align*}
\end{lemma}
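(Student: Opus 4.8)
The plan is to apply the martingale concentration inequality of Lemma~\ref{lem:dog-unbounded-2} (the \citet{carmon2022making} corollary) to the truncated sequence $\{\tilde\eta_k\inner{\vDelta_k}{\vx_k-\vx_\star}\}$, rather than Lemma~\ref{lem:dog-martingale}, because here the step size $\tilde\eta_k$ is already $\fF_{k-1}$-measurable (it depends only on $\bar r_k$, $G_k'$, and the stopping rule $\zeta$, all of which are determined by information up to time $k-1$ under the truncation). This is the key structural difference from the proof of Lemma~\ref{lem:noise-from-g}: there the weights $\bar r_k$ were not summed against a predictable scaling in the same way, so the sequence-valued concentration bound was needed; here the predictability of $\tilde\eta_k$ lets me define a genuine martingale difference sequence with $\fF_{k-1}$-measurable coefficients.

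First I would set up the filtration $\fF_k=\sigma(\vv_i,\vxi_i:0\le i\le k)$ exactly as in the proof of Lemma~\ref{lem:noise-from-g}, and define the martingale difference $X_k=\tilde\eta_k\inner{\vDelta_k}{\vx_k-\vx_\star}$, where $\vDelta_k=\nabla f_{\mu_k}(\vx_k)-\vg_k$. Since $\vg_k$ is an unbiased estimator of $\nabla f_{\mu_k}(\vx_k)$ given $\fF_{k-1}$, and since $\tilde\eta_k$ and $\vx_k-\vx_\star$ are both $\fF_{k-1}$-measurable, I get $\BE[X_k\mid\fF_{k-1}]=0$, so $(X_k)$ is a martingale difference sequence. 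Next I would bound $|X_k|$: on the event $\{k<\zeta\}$ we have $\bar r_k\le 3s_0$, hence $s_k=\norm{\vx_k-\vx_\star}\le \bar s_k\le \bar r_k+s_0\le 4s_0$; combined with $\norm{\vDelta_k}\le 2Ld$ (from Lemma~\ref{lem:upper-bound-for-g}, as in the proof of Lemma~\ref{lem:noise-from-g}) and $\tilde\eta_k=\bar r_k/\sqrt{G_k'}$, I can produce a uniform almost-sure bound $|X_k|\le c$ for an explicit $c$ controlled in terms of $s_0$, $L$, $d$, and the $\log_+$ factors hidden in $G_k'$. The precise constant is where the $8^4\theta_{T,\delta}$ normalization in $G_k'$ pays off, so that the martingale bound ultimately collapses into $s_0^2$.

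The main work is to verify that the bound $4\sqrt{\theta_{t,\delta}\sum_{k}(X_k-\hat X_k)^2+c^2\theta_{t,\delta}^2}$ from Lemma~\ref{lem:dog-unbounded-2} is dominated by $s_0^2$. For this I would choose the predictable companion $\hat X_k=\tilde\eta_k\inner{\nabla f_{\mu_k}(\vx_k)}{\vx_k-\vx_\star}\in\fF_{k-1}$, so that $X_k-\hat X_k=-\tilde\eta_k\inner{\vg_k}{\vx_k-\vx_\star}$, whence $(X_k-\hat X_k)^2\le \tilde\eta_k^2\norm{\vg_k}^2 s_k^2\le 16 s_0^2\,\tilde\eta_k^2\norm{\vg_k}^2$ using the truncation bound $s_k\le 4s_0$. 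Summing and invoking Lemma~\ref{lem:unbounded-1}, which gives $\sum_k\tilde\eta_k^2\norm{\vg_k}^2\le s_0^2/2$, I get $\sum_k(X_k-\hat X_k)^2\le 8s_0^4$. Feeding this and the $c$ from the previous paragraph into the concentration bound, and using the $\theta_{T,\delta}$ and $\log_+^2$ factors baked into $G_k'$ to absorb the $\theta_{t,\delta}$ prefactors, the whole expression is at most $s_0^2$, which is exactly the threshold in the statement.

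I expect the main obstacle to be bookkeeping the constants so that the final bound is genuinely $\le s_0^2$ and not merely $\fO(s_0^2)$: this is why $G_k'$ was defined with the specific numerical factor $8^4$ and the $\theta_{T,\delta}$ and $\log_+^2(t+2)$ terms, and I would need to check carefully that $c^2\theta_{t,\delta}^2$ and $\theta_{t,\delta}\sum(X_k-\hat X_k)^2$ each land below $(s_0^2/4)^2$ after taking the square root and multiplying by $4$. A secondary subtlety is confirming that $\hat X_k$ and $X_k$ both satisfy the $|\cdot|\le c$ requirement of Lemma~\ref{lem:dog-unbounded-2}, which follows from $\norm{\nabla f_{\mu_k}(\vx_k)}\le Ld$ together with the same truncation bounds; once both are verified, the corollary applies directly and yields the claimed probability $\delta$.
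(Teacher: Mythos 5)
Your overall route is exactly the paper's: same filtration, same martingale difference $X_k=\tilde\eta_k\inner{\vDelta_k}{\vx_k-\vx_\star}$ with predictable companion $\hat X_k=\tilde\eta_k\inner{\nabla f_{\mu_k}(\vx_k)}{\vx_k-\vx_\star}$, same appeal to Lemma~\ref{lem:dog-unbounded-2}, same truncation bounds $\bar r_{\zeta-1}\le 3s_0$, $\bar s_{\zeta-1}\le 4s_0$, $\norm{\vDelta_k}\le 2Ld$, and the same use of $G_k'\ge 16\cdot 8^4\theta_{T,\delta}^2 d^2\bar L^2$ to control $|X_k|$. So there is no difference in strategy to report.

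There is, however, one step that fails numerically as you wrote it. You bound the quadratic variation by invoking the \emph{statement} of Lemma~\ref{lem:unbounded-1}, i.e.\ $\sum_k\tilde\eta_k^2\norm{\vg_k}^2\le s_0^2/2$, which gives $\sum_k(X_k-\hat X_k)^2\le 16s_0^2\cdot s_0^2/2=8s_0^4$. Feeding that into the concentration bound yields $4\sqrt{\theta_{t,\delta}\cdot 8s_0^4+c^2\theta_{t,\delta}^2}\ge 8\sqrt{2\theta_{t,\delta}}\,s_0^2$, which is an order of magnitude larger than $s_0^2$ (note $\theta_{t,\delta}\ge\log(60\log 6)>4$), so the final inequality does not close. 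The paper avoids this by reaching \emph{inside} the proof of Lemma~\ref{lem:unbounded-1} and using the intermediate inequality (its equation~(\ref{eq:unbounded-1-2})), namely $\sum_k\tilde\eta_k^2\norm{\vg_k}^2\le 9s_0^2/(8^4\theta_{T,\delta})$, which retains the $1/(8^4\theta_{T,\delta})$ factor coming from the definition of $G_k'$; that factor is precisely what cancels the $\theta_{t,\delta}\le\theta_{T,\delta}$ prefactor under the square root and brings the whole expression down to $4\sqrt{(144+36)s_0^4/8^4}\le s_0^2$. You correctly anticipated that the $8^4\theta_{T,\delta}$ normalization in $G_k'$ is what must "absorb the $\theta_{t,\delta}$ prefactors", but the version of Lemma~\ref{lem:unbounded-1} you cite has already discarded that factor, so your bookkeeping cannot be completed without restating (or re-deriving) the sharper $\theta_{T,\delta}$-dependent bound.
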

\begin{proof}[Proof of Lemma \ref{lem:unbounded-2}]
We use the filtration 
    $\fF_k=\sigma(\vv_i, \vxi_i, 0\leq i\leq k)$ for $k\in \BN$ and $\fF_{-1}=\{\emptyset,\Omega_0\}$ as defined in Appendix \ref{apx:noise-from-g}.
Note that $\Tilde{\eta}_k \in \fF_{k-1}$.    
Define the stochastic processes $(Z_k,k\in \BN)$ and $(\hat{Z}_k,k\in \BN)$ as follows
\begin{align*}
Z_k=\Tilde{\eta}_k\inner{\vDelta_k}{\vx_k-\vx_\star} \in \fF_k \quad\text{and}\quad \hat{Z}_k = \Tilde{\eta}_k\inner{\nabla f_{\mu_k}(\vx_k)}{\vx_k-\vx_\star}\in \fF_{k-1},
\end{align*}
where $\vDelta_k=\nabla f_{\mu_k}(\vx_k)-\vg_k$.
From this, we observe
\begin{align*}
    \BE[Z_k|\fF_{k-1}] = \bar{\eta}_k\cdot \BE[\inner{\nabla f_{\mu_k}(\vx_k)-\vg_k}{\vx_k-\vx_\star}|\fF_{k-1}]=0,\quad \text{where}\quad k\in\BN.
\end{align*}
Thus,  $(Z_k,\fF_k, k\in \BN)$ is a martingale difference process.

Using the definition of $\Tilde{\eta}_k$ and the fact that $\bar{s}_t\leq \bar{r}_t+s_0$, we bound $|Z_k|$ as follows
\begin{align*}
      |Z_k|\leq \Tilde{\eta}_ks_k\norm{\vDelta_k}\leq  \frac{\bar{r}_{\zeta-1}\bar{s}_{\zeta-1}\norm{\vDelta_k}}{\sqrt{G_k^\prime}}\leq\frac{12s_0^2\norm{\vDelta_k}}{\sqrt{G_k^\prime}}.
\end{align*}
In Appendix \ref{apx:noise-from-g}, we have shown  $\norm{\vDelta_k}\leq 2Ld$.
Moreover, we have $G_k'\geq 16\cdot 8^4 \theta_{T,\delta}^2 d^2\bar{L}^2$.
Thus, we further have
\begin{align*}
    |Z_k| \leq \frac{6s_0^2}{8^2 \theta_{T,\delta}}.
\end{align*}
Similarly, $|Z_k|$ and $|\hat{Z}_k|$ are both bounded above by $6s_0^2/(8^2\theta_{T,\delta})$.
Using Lemma \ref{lem:dog-unbounded-2} with $c=6s_0^2/(8^2\theta_{T,\delta})$, we obtain
\begin{align}\label{eq:unbounded-2-1}
  \BP\bigg(\exists t\leq T: \bigg|\sum_{k=0}^{t-1}Z_k\bigg|>4\sqrt{\theta_{t,\delta}\sum_{k=0}^{t-1}(Z_k-\hat{Z}_k)^2+c^2\theta_{t,\delta}^2}\bigg)\leq \delta.
\end{align}
The upper bound for $\sum_{k=0}^{t-1}(Z_k-\hat{Z}_k)^2$ is given by
\begin{align}\label{eq:unbounded-2-2}
    \sum_{k=0}^{t-1}(Z_k-\hat{Z}_k)^2 =\sum_{k=0}^{t-1}\Tilde{\eta}_k^2(\inner{\vg_k}{\vx_k-\vx_\star})^2\leq \sum_{k=0}^{t-1}\Tilde{\eta}_k^2s_k^2 \norm{\vg_k}^2 \leq \bar{s}_{\zeta-1}^2\sum_{k=0}^{t-1}\Tilde{\eta}_k^2\norm{\vg_k}^2 \leq (4s_0)^2\sum_{k=0}^{t-1}\Tilde{\eta}_k^2\norm{\vg_k}^2 \leq  \frac{12^2s_0^4}{8^4\theta_{T,\delta}},
\end{align}
where  the second inequality follows from $\bar{s}_k\leq \bar{r}_k+s_0$ and the last inequality follows from (\ref{eq:unbounded-1-2}).
Plugging (\ref{eq:unbounded-2-2}) into (\ref{eq:unbounded-2-1}), we get
\begin{align*}
    4\sqrt{\theta_{t,\delta}\sum_{k=0}^{t-1}(Z_k-\hat{Z}_k)^2+c^2\theta_{t,\delta}^2} \leq 4\sqrt{\theta_{t,\delta} \frac{12^2s_0^4}{8^4\theta_{T,\delta}} +\theta_{t,\delta}^2 \frac{6^2s_0^4}{8^4 \theta_{T,\delta}^2} }\leq 4\sqrt{ \frac{12^2s_0^4}{8^4} + \frac{6^2s_0^4}{8^4 }}\leq s_0^2.
\end{align*}
Thus, we have
\begin{align*}
     \BP\bigg(\exists t\leq T: \bigg|\sum_{k=0}^{t-1}\Tilde{\eta}_k\inner{\vDelta_k}{\vx_k-\vx_\star}\bigg|>s_0^2\bigg)\leq \delta.
\end{align*}

\end{proof}

\begin{lemma}\label{lem:unbounded-3}
Given $T\in \BN_+$, for any $t\leq T$, the following inequality holds
\begin{align*}
  \sum_{k=0}^{t}\Tilde{\eta}_k\inner{\nabla f_{\mu_k}(\vx_k)}{\vx_\star-\vx_k}\leq \frac{s_0^2}{4}.
\end{align*}
\end{lemma}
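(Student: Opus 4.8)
The plan is to bound the sum $\sum_{k=0}^{t}\Tilde{\eta}_k\inner{\nabla f_{\mu_k}(\vx_k)}{\vx_\star-\vx_k}$ term by term, exploiting the \emph{convexity} of each smooth surrogate $f_{\mu_k}$. By Lemma~\ref{lem:fact-convex} applied to the convex function $f_{\mu_k}$ (convexity is guaranteed by Lemma~\ref{lem:error}), we have $\inner{\nabla f_{\mu_k}(\vx_k)}{\vx_\star-\vx_k}\leq f_{\mu_k}(\vx_\star)-f_{\mu_k}(\vx_k)$. I would then invoke Lemma~\ref{lem:error} twice to pass from the surrogate $f_{\mu_k}$ back to the true objective $f$, picking up error terms of size $L\mu_k$ on each side, so that $f_{\mu_k}(\vx_\star)-f_{\mu_k}(\vx_k)\leq f(\vx_\star)-f(\vx_k)+2L\mu_k$. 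Since $\vx_\star$ is the minimizer (Definition~\ref{def:x_star}), the gap $f(\vx_\star)-f(\vx_k)\leq 0$, so each summand is at most $\Tilde{\eta}_k\cdot 2L\mu_k$ (using $\Tilde{\eta}_k\geq 0$).

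The next step is to control $\sum_{k=0}^{t}2L\Tilde{\eta}_k\mu_k$ using the modified parameter settings from equation~(\ref{eq:mu-unbounded}), namely $\mu_k = d\bar{r}_k/(k+1)^2$ and $\Tilde{\eta}_k=\eta_k\BI(k<\zeta)=\bar{r}_k/\sqrt{G_k^\prime}\cdot\BI(k<\zeta)$. Substituting these gives a summand of the form $2Ld\,\bar{r}_k^2/((k+1)^2\sqrt{G_k^\prime})$ on the range $k<\zeta$. I would bound $\bar{r}_k\leq\bar{r}_{\zeta-1}\leq 3s_0$ (since $k<\zeta$ means $\bar r_k\le 3s_0$), and use the lower bound $G_k^\prime\geq 16\cdot 8^4\theta_{T,\delta}^2 d^2\bar{L}^2$ already established in the proof of Lemma~\ref{lem:unbounded-2} (which yields $\sqrt{G_k^\prime}\geq 4\cdot 8^2\theta_{T,\delta}d\bar{L}\geq cd L$ for a suitable constant, using $\bar L\ge L$ and $\theta_{T,\delta}\ge 1$). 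This cancels the factors of $d$ and $L$, leaving a convergent series $\sum_{k=0}^\infty 1/(k+1)^2=\pi^2/6$ times a controllable prefactor.

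The one quantity requiring care is the interplay between the $\bar r_k^2$ factor and the magnitude of $\sqrt{G_k^\prime}$: I would keep the $\bar{r}_{\zeta-1}^2\le 9s_0^2$ bound outside the sum and verify that the remaining constants, after combining $\theta_{T,\delta}\geq 1$ and $\log_+(k+2)\geq 1$, are small enough that $\sum_{k=0}^{t}2L\Tilde{\eta}_k\mu_k\leq s_0^2/4$. Concretely, the prefactor works out to roughly $9s_0^2\cdot 2Ld/(4\cdot 8^2\theta_{T,\delta}d\bar L)\cdot(\pi^2/6)$, and since $L\leq\bar L$ and $\theta_{T,\delta}$ can be assumed at least a moderate constant, the numerical constants $8^2$ and $8^4$ baked into the definition of $G_k^\prime$ are deliberately large enough to absorb $9\cdot 2\cdot(\pi^2/6)$ and drive the bound below $s_0^2/4$.

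The main obstacle is bookkeeping the constants: one must track the precise factors from $G_k^\prime$'s definition (the $8^4$ and the $16\theta_{T,\delta}d^2\bar L^2$ additive term) and confirm that the chosen lower bound on $\sqrt{G_k^\prime}$ is compatible with the other two lemmas (\ref{lem:unbounded-1} and \ref{lem:unbounded-2}), which together with this lemma are summed in inequality~(\ref{eq:unbounded-s}) to produce the $\bar r_t\le 3s_0$ conclusion of Proposition~\ref{prop:unbounded-1}. The three lemmas must each contribute at most $s_0^2/2$, $s_0^2$, and $s_0^2/4$ respectively so that the total stays below the threshold forcing the stopping time $\zeta$; verifying that the constants align across all three is the delicate part, whereas the convexity argument and the geometric-style summation are routine.
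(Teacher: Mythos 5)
Your proposal is correct and follows essentially the same route as the paper's proof: convexity of $f_{\mu_k}$ plus the $2L\mu_k$ smoothing error and the optimality of $\vx_\star$ reduce each summand to $2L\Tilde{\eta}_k\mu_k$, and then substituting $\mu_k=d\bar{r}_k/(k+1)^2$, the lower bound $\sqrt{G_k^\prime}\geq 4\cdot 8^2 d\bar{L}$, and $\bar{r}_{\zeta-1}\leq 3s_0$ yields the Basel-series bound $\tfrac{3\pi^2 s_0^2}{4\cdot 8^2}\leq \tfrac{s_0^2}{4}$. Your constant bookkeeping matches the paper's, so there is nothing to fix.
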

\begin{proof}[Proof of Lemma \ref{lem:unbounded-3}]
By Lemma \ref{lem:error}, we have
\begin{align*}
    \inner{\nabla f_{\mu_k}(\vx_k)}{\vx_\star-\vx_k}\leq f_{\mu_k}(\vx_\star)-f_{\mu_k}(\vx_k)  \leq  f(\vx_\star) - f(\vx_k) +  2L\mu_k\leq 2L\mu_k,
\end{align*}
where the last inequality holds since $f(\vx_\star)=\inf_{\vx\in \fX} f(\vx)$.
Thus, the term can be bounded by
\begin{align*}
     \sum_{k=0}^{t}\Tilde{\eta}_k\inner{\nabla f_{\mu_k}(\vx_k)}{\vx_\star-\vx_k}  =  \sum_{k=0}^{\min(\zeta-1, t)}\eta_k\inner{\nabla f_{\mu_k}(\vx_k)}{\vx_\star-\vx_k}\leq 2L\sum_{k=0}^{\min(\zeta-1, t)} \eta_k\mu_k.
\end{align*}
Since $\mu_k=d\bar{r}_k/(k+1)^2$ and $G_k^\prime \geq 16\cdot 8^4d^2\bar{L}^2$, we have
\begin{align*}
         \sum_{k=0}^{t}\Tilde{\eta}_k\inner{\nabla f_{\mu_k}(\vx_k)}{\vx_\star-\vx_k} \leq  \frac{2L}{4\cdot 8^2 \bar{L}}
        \sum_{k=0}^{\min(\zeta-1, t)}\frac{\bar{r}_k^2}{(k+1)^2} \leq \frac{ \bar{r}_{\zeta-1}^2}{2\cdot 8^2 }\sum_{k=0}^{\min(\zeta-1,t)}\frac{1}{(k+1)^2} \leq \frac{3\pi^2 s_0^2}{4\cdot 8^2}\leq \frac{s_0^2}{4},
\end{align*}
where the third inequality holds by the fact that $\sum_{k=1}^\infty 1/k^2=\pi^2/6$ and $\bar{r}_{\zeta-1}\leq 3s_0$.
\end{proof}

Based on Lemmas \ref{lem:unbounded-1}, \ref{lem:unbounded-2}, and \ref{lem:unbounded-3}, we now establish Proposition \ref{prop:unbounded-1}.

\begin{proof}[Proof of Proposition \ref{prop:unbounded-1}]
Given any $\delta>0$, we define the following set
\begin{align*}
    \Tilde{\Omega}_\delta \triangleq \bigg\{\omega\in \Omega_0: \forall t\leq T,  \bigg|\sum_{k=0}^{t-1}\Tilde{\eta}_k\inner{\vDelta_k}{\vx_k-\vx_\star}\bigg|\leq s_0^2\bigg\}.
\end{align*}
By Lemma \ref{lem:unbounded-2}, we have $\BP(\Tilde{\Omega}_\delta)\geq 1-\delta$.

We use induction to complete the proof.
For the initial step, we have $\bar{r}_0=r_\epsilon\leq 3s_0$.
For the induction step, we assume $\bar{r}_{t-1}\leq 3s_0$, which is equivalent to $\zeta>t-1$.
From (\ref{eq:unbounded-s}), we have    
 \begin{align*}
    s_t^2-s_0^2 &\leq \sum_{k=0}^{t-1}\eta_k^2\norm{\vg_k}^2+2\sum_{k=0}^{t-1}\eta_k\inner{\vDelta_k}{\vx_k-\vx_\star} +2\sum_{k=0}^{t-1}\eta_k\inner{\nabla f_{\mu_k}(\vx_k)}{\vx_\star-\vx_k}\\
    &= \sum_{k=0}^{t-1}\Tilde{\eta}_k^2\norm{\vg_k}^2+2\sum_{k=0}^{t-1}\Tilde{\eta}_k\inner{\vDelta_k}{\vx_k-\vx_\star} +2\sum_{k=0}^{t-1}\Tilde{\eta}_k\inner{\nabla f_{\mu_k}(\vx_k)}{\vx_\star-\vx_k},
 \end{align*}
 where the equality holds since $\zeta>t-1$.
Applying Lemmas \ref{lem:unbounded-1}, \ref{lem:unbounded-2}, and \ref{lem:unbounded-3}, for any $\omega\in \Tilde{\Omega}_\delta$, we derive
 \begin{align*}
     s_t^2-s_0^2\leq \frac{1}{2}s_0^2 + 2s_0^2+2\cdot \frac{1}{4}s_0^2=3s_0^2,
 \end{align*}
 which implies that $s_t\leq 2 s_0$ , and consequently $r_t\leq s_t+s_0=3 s_0$.
 By the induction step, we have $\bar{r}_t = \max(\bar{r}_{t-1}, r_t)\leq 3s_0$.
 Therefore, we conclude that for any $\omega\in \Tilde{\Omega}_\delta$, we have $\bar{r}_t\leq 3s_0$ for all $t\leq T$. 
 Equivalently, $ \BP(\bar{r}_T>3 s_0)\leq \delta$.
\end{proof}

\subsection{Proof of Proposition \ref{prop:unbounded-2}}\label{apx:unbounded-prop2}
\begin{proof}[Proof of Proposition  \ref{prop:unbounded-2}]
Since $G_t^\prime \geq G_t$, using the results in \citet[Lemma 3.4]{ivgi2023dog}, we obtain Lemma \ref{lem:weighted-regret} holds by replacing $G_t$ with $G_t^\prime$.
Besides, Lemma \ref{lem:noise-from-g} still holds.
For Lemma \ref{lem:noise-from-mu}, since $\mu_t=d\bar{r}_t/(t+1)^2$ in (\ref{eq:mu-unbounded}), the noise from $\mu$ can be re-bounded as 
\begin{align*}
    \sum_{k=0}^{t-1} 2L \bar{r}_k\mu_k = \sum_{k=0}^{t-1} \frac{2Ld\bar{r}_k^2}{(k+1)^2}\leq 2Ld\bar{r}_{t-1}^2 
    \sum_{k=0}^{t-1}\frac{1}{(k+1)^2}\leq 4Ld\bar{r}_{t-1}^2,
\end{align*}
where we use the fact that $\sum_{k=1}^\infty 1/k^2=\pi^2/6$.

Combining these modified Lemmas along with equations (\ref{eq:gapsmooth}) and (\ref{eq:threecomponentssmooth}), we obtain that, with probability at least $1-\delta$,  the upper bound for  $f(\bar{\vx}_t)-f(\vx_\star)$ is 
\begin{align*}
\frac{(2\bar{s}_t+\bar{r}_t)\sqrt{G_{t-1}^\prime} + 8\bar{s}_t\sqrt{\theta_{t,\delta}G_{t-1}+4L^2d^2\theta_{t,\delta}^2}\,+4Ld\bar{r}_t}{\sum_{k=0}^{t-1}\bar{r}_k/\bar{r}_t},
\end{align*} 
where we ignore the constants and use the fact that $\bar{r}_{t-1}\leq \bar{r}_t$ and $\bar{s}_{t-1}\leq \bar{s}_t$.
Applying the inequality $\sqrt{a^2+b^2}\,\leq a+b$, the gap simplifies to
\begin{align*}
         \frac{(2\bar{s}_t+\bar{r}_t)\sqrt{G_{t-1}^\prime} + 8\bar{s}_t(\theta_{t,\delta}\sqrt{G_{t-1}}+2\theta_{t,\delta}Ld)+4Ld\bar{r}_t}{\sum_{k=0}^{t-1}\bar{r}_k/\bar{r}_t}.
\end{align*}
Finally, using the fact that $G_t^\prime \geq G_t$ and the
triangle inequality $\bar{s}_t\leq \bar{r}_t+s_0$, the gap becomes
\begin{align*}
    20\cdot \frac{\theta_{t,\delta}(\bar{r}_t+s_0)(\sqrt{G_{t-1}^\prime}+ Ld)}{\sum_{k=0}^{t-1}\bar{r}_k/\bar{r}_t}.
\end{align*}

\end{proof}

\subsection{Proof of Theorem \ref{thm2}}\label{apx:thm2}
We first establish the existence and uniqueness of the conditional expectation $\BE[f(\bar{\vx}_{\tau_T})-f(\vx_\star)\,|\, \Tilde{\fF}_\delta]$.
According to \citet[Chapter 4.1]{durrett2019probability}, it suffices to verify that $\Tilde{\fF}_\delta\subset \Tilde{\fF}_0$ and $\BE|f(\bar{\vx}_{\tau_T})-f(\vx_\star)|<\infty$.
By definition, we recall that $\Tilde{F}_\delta = \{A:A\subset \Tilde{\Omega}_\delta\,\cap\, \hat\Omega_\delta\}\cap \Tilde{\fF}_0$.
Thus, we have $\Tilde{\fF}_\delta\subset \Tilde{\fF}_0$.
Next, from inequality (\ref{eq:unbounded-s}), we have
\begin{align*}
     s_t^2-s_0^2 \leq & \sum_{k=0}^{t-1}\eta_k^2\norm{\vg_k}^2+2\sum_{k=0}^{t-1}\eta_k\inner{\vDelta_k}{\vx_k-\vx_\star}  +2\sum_{k=0}^{t-1}\eta_k\inner{\nabla f_{\mu_k}(\vx_k)}{\vx_\star-\vx_k},
\end{align*}
where $t=1,2,\dots, T$.
Using the upper bounds $\norm{\vg_k}\leq Ld$ (Lemma \ref{lem:upper-bound-for-g}), $\norm{\nabla f_{\mu_k}(\vx_k)}\leq Ld$ and  $\norm{\vDelta_k}\leq 2Ld$ (Appendix~\ref{apx:noise-from-g}), we obtain 
\begin{align*}
     s_t^2-s_0^2 &\leq \sum_{k=0}^{t-1}\eta_k^2\norm{\vg_k}^2+2\sum_{k=0}^{t-1}\eta_k\norm{\vDelta_k}\norm{\vx_k-\vx_\star}  +2\sum_{k=0}^{t-1}\eta_k\norm{\nabla f_{\mu_k}(\vx_k)}\norm{\vx_\star-\vx_k}\\
    &\leq  L^2d^2\sum_{k=0}^{t-1}\eta_k^2 + 6Ld\bar{s}_{t-1}\sum_{k=0}^{t-1}\eta_k.
\end{align*}
Since $\eta_k=\bar{r}_k/\sqrt{G_k^\prime}$ with $G_k^\prime\geq d^2L^2$, it follows that
\begin{align*}
     s_t^2-s_0^2 \leq T (\bar{r}_{t-1}^2 + 6\bar{r}_{t-1}\bar{s}_{t-1}).
\end{align*}
Given that $\bar{r}_0=r_\epsilon\leq 3s_0<\infty$, using mathematical  induction, we can obtain that $\bar{s}_t<\infty$ and $\bar{r}_t<\infty$ for all $t\leq T$.
Finally, by the Lipschitz continuity of $f(\cdot)$, we obtain
\begin{align*}
    |f(\bar{\vx}_{\tau_T})-f(\vx_\star)|\leq L\norm{\bar{\vx}_{\tau_T}-\vx_\star}\leq L\bar{s}_T<\infty.
\end{align*}
Thus, we obtain $\BE|f(\bar{\vx}_{\tau_T})-f(\vx_\star)|<\infty$.
Therefore, we conclude that the conditional expectation $\BE[f(\bar{\vx}_{\tau_T})-f(\vx_\star)\,|\, \Tilde{\fF}_\delta]$ exists and is unique.

Now, we provide the proof for Theorem \ref{thm2}.
\begin{proof}[Proof of Theorem \ref{thm2}]
Recall that
\begin{align*}
\Tilde{\Omega}_\delta &= \bigg\{\omega\in \Omega_0: \forall t\leq T,  \bigg|\sum_{k=0}^{t-1}\Tilde{\eta}_k\inner{\vDelta_k}{\vx_k-\vx_\star}\bigg|\leq s_0^2\bigg\},\\
    \hat{\Omega}_\delta &= \{\omega\in \Omega_0: \forall  t\leq T, \bigg|\sum_{k=0}^{t-1}\bar{r}_k\inner{\vDelta_k}{\vx_k-\vx_\star}\bigg|< b_t\} ,
\end{align*}
which satisfies $\Tilde{\BP}(\Tilde{\Omega}_\delta)\geq 1-\delta$ and $\Tilde{\BP}(\hat{\Omega}_\delta)\geq 1-\delta$.
From  Proposition \ref{prop:unbounded-2},  for any $\omega\in \Tilde{\Omega}_\delta \cap \hat{\Omega}_\delta$, we have
\begin{align*}
    f(\bar{\vx}_t)-f(\vx_\star)\leq   20\cdot \frac{\theta_{t,\delta}(\bar{r}_t+s_0)(\sqrt{G_{t-1}^\prime}+ Ld)}{\sum_{k=0}^{t-1}\bar{r}_k/\bar{r}_t},
\end{align*}
which holds for all $t\leq T$.
Combining Proposition \ref{prop} and equation (\ref{eq:tau}), we obtain that for any $\omega\in \Tilde{\Omega}_\delta \cap \hat{\Omega}_\delta$, the following inequality holds:
\begin{align*}
  f(\bar{\vx}_{\tau_T}) -f(\vx_\star)  \leq c_0\cdot \frac{\theta_{\tau_T,\delta}(\bar{r}_{\tau_T}+s_0)(\sqrt{G_{\tau_T-1}^\prime}+ Ld)}{T}\log_+\bigg(\frac{\bar{r}_{\tau_T}}{r_\epsilon}\bigg),
\end{align*}
where $c_0$ is a constant and
$\tau_T=\arg\max_{t\leq T}\sum_{k=0}^{t-1}\bar{r}_k/\bar{r}_t$.
Note that $\theta_{t,\delta}$, $\bar{r}_t$ and $G_t$ are non-deceasing w.r.t $t$.
Since $\tau_T\leq T$, the bound becomes
\begin{align*}
    f(\bar{\vx}_{\tau_T}) -f(\vx_\star)  \leq c_0\cdot\frac{\theta_{T,\delta}(\bar{r}_{T}+s_0)(\sqrt{G_{T-1}^\prime}+Ld)}{T}\log_+\bigg(\frac{\bar{r}_{T}}{r_\epsilon}\bigg).
\end{align*}
From Proposition \ref{prop:unbounded-1}, we have  $\bar{r}_T\leq  3s_0$ for any $\omega\in \Tilde{\Omega}_\delta \cap \hat{\Omega}_\delta$.
Substituting this into the bound, we obtain
\begin{align*}
    f(\bar{\vx}_{\tau_T}) -f(\vx_\star)  \leq 4c_0\cdot  \frac{\theta_{T,\delta}s_0(\sqrt{G_{T-1}^\prime}+ Ld)}{T}\log_+\bigg(\frac{3s_0}{r_\epsilon}\bigg).
\end{align*}
Recall that $\Tilde{\fF}_\delta = \{A: A\subset  \Tilde{\Omega}_\delta \cap \hat{\Omega}_\delta\} \cap \Tilde{\fF}_0$ is a sigma field satisfying $\Tilde{\fF}_\delta\subset\Tilde{\fF}_0$ and $\Tilde{\Omega}_\delta\cap \hat{\Omega}_\delta \in \Tilde{\fF}_\delta$.
Then, applying Lemma \ref{lem:ce-inequality}, for any $\omega\in \Tilde{\Omega}_\delta \cap \hat{\Omega}_\delta$, we have
\begin{align}\label{eq:thm1-1}
       \BE[f(\bar{\vx}_{\tau_T}) -f(\vx_\star)\,|\,\Tilde{\fF}_\delta]  \leq 4c_0\cdot  \frac{\theta_{T,\delta}s_0(\BE[\sqrt{G_{T-1}^\prime}\,|\, \Tilde{\fF}_\delta]+ Ld)}{T}\log_+\bigg(\frac{3s_0}{r_\epsilon}\bigg).
\end{align}

Using Lemma \ref{lem:upper-bound-for-g} and Lemma \ref{lem:ce-exchange}, the conditional expectation of $G_{T-1}$ can be bounded as follows
\begin{align*}
    \BE[G_{T-1}^\prime\,|\,\Tilde{\fF}_\delta]= \BE[\BE[G_{T-1}^\prime]\,|\,\Tilde{\fF}_\delta]=&
    8^4\theta_{T,\delta}\log_+^2(T+1)\bigg(\sum_{k=0}^{T-1}\BE[\BE[\norm{\vg_k}^2]\,|\,\Tilde{\fF}_\delta]+16\theta_{ T,\delta}d^2\bar{L}^2\bigg) \\
    \leq  &8^4\theta_{T,\delta}^2\log_+^2(T+1)(cL^2dT+16d^2\bar{L}^2).
\end{align*}
Since the square root function is concave, applying Jensen's inequality in Lemma \ref{lem:ce-jensen} gives
\begin{align*}
    \BE[\sqrt{G_{T-1}^\prime}\,|\,\Tilde{\fF}_\delta]\leq \sqrt{\BE[G_{T-1}^\prime\,|\,\Tilde{\fF}_\delta]} \leq 8^2\theta_{T,\delta}\log_+(T+1)\sqrt{cL^2dT+16d^2\bar{L}^2}\leq 8^2\theta_{T,\delta}\log_+(T+1)(\sqrt{c}L\sqrt{dT}+4d\bar{L}).
\end{align*}
Substituting this back into equation (\ref{eq:thm1-1}),  for any $\omega\in \Tilde{\Omega}_\delta \cap \hat{\Omega}_\delta$, we have
\begin{align}
       \BE[f(\bar{\vx}_{\tau_T}) -f(\vx_\star)\,|\,\Tilde{\fF}_\delta]  \leq c_1\bigg(\frac{d}{T}\cdot(L+\bar{L})+\frac{\sqrt{d}}{\sqrt{T}}\cdot L\bigg)\alpha_{T,\delta} s_0\log_+\bigg(\frac{s_0}{r_\epsilon}\bigg),
\end{align}
where $c_1$ is a constant and $\alpha_{T,\delta}=\theta_{T,\delta}\log_+(T+1)$.
Moreover, we have
\begin{align*}
    \Tilde{\BP}(\Tilde{\Omega}_\delta \cap \hat{\Omega}_\delta)\geq \Tilde{\BP}(\Tilde{\Omega}_\delta)+\Tilde{\BP}( \hat{\Omega}_\delta)-1\geq 1-2\delta.
\end{align*}
\end{proof}

\subsection{Proof of Theorem \ref{thm3}}
\begin{proof}\label{apx:thm3} 
For given $T\geq 2$, we let $\vxi\sim \Xi$, where $\Xi$ is a Bernoulli distribution such that
\begin{align*}
    \BP(\vxi=0)= 1-\frac{1}{T} \quad \text{and}\quad \BP(\vxi=1) =\frac{1}{T}.
\end{align*}

We first define the function $f_1:\BR^d\to\BR$ as
\begin{align*}
    f_1(\vx) = L\norm{\vx}_1,
\end{align*}
where $\|\cdot\|$ is the $\ell_1$-norm.
We introduce the SZO for function $f_1$ such that for given $\vx\in \BR^d$ and $\vy\in \BR^d$, it returns the evaluation $F_1(\vx;\vxi)$ and $F_1(\vy;\vxi)$ satisfying $F_1(\vx;\vxi)=L\norm{\vx}_1$ and $F_2(\vy;\vxi)=L\norm{\vy}_1$ for all $\vxi$ drawn from $\Xi$.
We can verify such oracle satisfies Assumption \ref{asm:oracle}.
Moreover, the function $F_1(\cdot\,;\vxi)$ is convex and Lipschitz continuous with constant $L_1=L$ for all $\vxi$. 

We then define another function $f_2:\BR^d\to\BR$ as
\begin{align*}
    f_2(\vx) = L\norm{\vx-\vu}_1,
\end{align*}
where  $\vu=(1-1/T)\mathbf{1}_d\in \BR^d$.
We introduce the SZO for function $f_2$ such that for given $\vx\in \BR^d$ and $\vy\in \BR^d$, it returns the evaluation $F_2(\vx;\vxi)$ and $F_2(\vy;\vxi)$ satisfying
\begin{align*}
    F_2(\vx;0)=L\norm{\vx}_1, \qquad F_2(\vx;1)=TL\norm{\vx-\vu}_1-(T-1)L\norm{\vx}_1, \\
    F_2(\vy;0)=L\norm{\vy}_1, \qquad F_2(\vx;1)=TL\norm{\vy-\vu}_1-(T-1)L\norm{\vy}_1. 
\end{align*}
Thus, we have $\BE_{\vxi\sim\Xi}[F_2(\vz;\vxi)]=f_2(\vz)$ for all $\vz\in\BR^d$, which satisfies Assumption \ref{asm:oracle}.
Moreover, we can verify that both $ F_2(\vz;0)$ and $ F_2(\vz;1)$ are convex and Lipschitz continuous on $\BR$, where the Lipschitz constant is $L_2=2TL$.

We set the initial point as $\vx_0=\mathbf{1}_d$,
then the probability of obtaining the identical information from two oracles $F_1$ and $F_2$ with $T$ rounds of oracle calls is
\begin{align*}
p= \left(1-\frac{1}{T}\right)^T  \geq  \frac{1}{\mathrm{e}}
\end{align*}
for all $T\geq 2$.
This implies that any SZO algorithm cannot distinguish $f_1(\cdot)$ and $f_2(\cdot)$ in $T$ rounds of SZO calls with probability at least $1/{\rm e}$.
Therefore, a near-optimal SZO algorithm $\fA$ must achieve the nearly tight function value gap for $T$ with probability $1/{\rm e}$.
In other words, algorithm $\fA$ must output $\hat{\vx}$ satisfying
\begin{align*}
f_1(\hat{\vx}) - f_1^\star \leq \theta_1\bigg(\frac{\bar{L}}{\underline{L}}, \frac{\bar{s}}{\underline{s}}, T, d\bigg) \cdot \frac{\sqrt{d}L_1\norm{\vx_0-\vx_{1,*}}_2 }{\sqrt{T}}
\quad\text{and}\quad
f_2(\hat{\vx}) - f_2^\star \leq \theta_2\bigg(\frac{\bar{L}}{\underline{L}}, \frac{\bar{s}}{\underline{s}}, T, d\bigg) \cdot \frac{\sqrt{d}L_2\norm{\vx_0-\vx_{2,*}}_2}{\sqrt{T}},
\end{align*}
where $\vx_{1,\star}\triangleq\argmin_{\vx\in\BR^d} f_1(\vx)=\vzero$, $\vx_{2,\star}\triangleq\argmin_{\vx\in\BR^d} f_2(\vx)=\vu$, and
$\theta_1,\theta_2:\BR^4\to\BR$ are two polylogarithmic functions.
Since we have shown $L_1=L$ and $L_2=2TL$,
the above upper bounds on function value gap can be written as
\begin{align*}
\norm{\hat{\vx}}_1\leq \theta_1\bigg(\frac{\bar{L}}{\underline{L}}, \frac{\bar{s}}{\underline{s}}, T, d\bigg)\cdot \frac{d}{\sqrt{T}}
\qquad \text{and} \qquad
\norm{\hat{\vx}-\vu}_1 \leq \theta_2\bigg(\frac{\bar{L}}{\underline{L}}, \frac{\bar{s}}{\underline{s}}, T, d\bigg)\cdot\sqrt{d}\norm{\mathbf{1}_d-\vu}_2\sqrt{T}.
\end{align*}
Recall than $\vu=(1-1/T)\mathbf{1}_d$, then the point $\hat{\vx}$ must satisfy
\begin{align*}
 \norm{\hat{\vx}}_1\leq \theta_1\bigg(\frac{\bar{L}}{\underline{L}}, \frac{\bar{s}}{\underline{s}}, T, d\bigg)\cdot \frac{d}{\sqrt{T}}\quad\text{and}\quad \norm{\hat{\vx}}_1\geq d-\frac{d}{T}-\theta_2\bigg(\frac{\bar{L}}{\underline{L}}, \frac{\bar{s}}{\underline{s}}, T, d\bigg)\cdot \frac{d}{\sqrt{T}}.
\end{align*}
Since the functions $\theta_1$ and $\theta_2$ are polylogarithmic, there exist a sufficient large $T$ such that 
\begin{align*}
\theta_1\bigg(\frac{\bar{L}}{\underline{L}}, \frac{\bar{s}}{\underline{s}}, T, d\bigg)\cdot \frac{d}{\sqrt{T}} <  d-\frac{d}{T}-\theta_2\bigg(\frac{\bar{L}}{\underline{L}}, \frac{\bar{s}}{\underline{s}}, T, d\bigg)\cdot \frac{d}{\sqrt{T}},
\end{align*}
which leads to contradiction.    
Hence, we conclude that achieving an ideal parameter-free stochastic zeroth-order algorithm described in the theorem is impossible.
\end{proof}

\newpage

\end{document}